\DeclareMathOperator{\mspec}{\ensuremath{mSpec}}
\DeclareMathOperator{\hdim}{\ensuremath{hdim}}
\DeclareMathOperator{\jac}{\ensuremath{J}}
\DeclareMathOperator{\len}{\ensuremath{len}}
\theoremstyle{plain}
\newtheorem{theorem}[equation]{Theorem}
\newtheorem{lemma}[equation]{Lemma}
\newtheorem{proposition}[equation]{Proposition}
\newtheorem{corollary}[equation]{Corollary}
\theoremstyle{definition}
\newtheorem{definition}[equation]{Definition}
\newtheorem{remark}[equation]{Remark}
\newtheorem{question}[equation]{Question}
\numberwithin{equation}{section}
\begin{document}
\title{The Zariski covering number for vector spaces and modules}

\author{Soham Ghosh}
\address{Indian Statistical Institute, Bangalore 560 059, India;  Bachelor of Mathematics Student}
\email{Primary:\tt sohamghosh132001@gmail.com\\
        Secondary:\tt bmat1937@isibang.ac.in }

\keywords{Covering number, Zariski topology, Baire spaces, dual Goldie dimension}

\subjclass[2010]{13F05 (primary); 13H99, 13F10; 54E52, 54H99 (secondary)}
\date{\today}

\begin{abstract}
Given a module $M$ over a commutative unital ring $R$, let $\sigma(M, R)$ denote
the covering number, i.e.\ the smallest (cardinal) number of proper
submodules whose union covers $M$; this includes the covering
numbers of Abelian groups, which are extensively studied in the
literature. Recently, Khare--Tikaradze [\textit{Comm.\ Algebra}, in
press] showed in several cases that
$\sigma(M,R)=\min_{\mathfrak{m} \in S_M} |R / \mathfrak{m}| + 1$, where
$S_M$ is the set of maximal ideals $\mathfrak{m}$ with $\dim_{R /
\mathfrak{m}}(M / \mathfrak{m}M) \geq 2$. Our first main result extends this equality to all $R$-modules with small
Jacobson radical and finite dual Goldie dimension.

We next introduce and study a topological counterpart for finitely
generated $R$-modules $M$ over rings $R$, whose `some' residue fields are infinite, which we call the Zariski covering number
$\sigma_\tau(M,R)$. To do so, we first define the ``induced Zariski
topology'' $\tau$ on $M$, and now define $\sigma_\tau(M,R)$ to be the
smallest (cardinal) number of proper $\tau$-closed subsets of $M$ whose
union covers $M$. We then show our next main result: $\sigma_\tau(M,R) = \min_{\mathfrak{m}
\in S_M} |R / \mathfrak{m}| + 1$, for all finitely generated $R$-modules
$M$ for which (a)~the dual Goldie dimension is finite, and (b)~$\mathfrak{m}\notin S_M$ whenever $R/\mathfrak{m}$ is finite. As a corollary, this
alternately recovers the aforementioned formula for the covering number
$\sigma(M,R)$ of the aforementioned finitely generated modules.

Finally, we discuss the notion of $\kappa$-Baire spaces, and show that
the inequalities $\sigma_\tau(M,R) \leq \sigma(M,R) \leq \kappa_M :=
\min_{\mathfrak{m} \in S_M} |R / \mathfrak{m}| + 1$ again become equalities
when the image of $M$ under the continuous map $q : M \to
\prod_{\mathfrak{m} \in {\rm mSpec}(R)} M / \mathfrak{m}M$ (with
appropriate Zariski-type topologies) is a $\kappa_M$-Baire subspace of
the product space.
\end{abstract}
\maketitle


\section{Global notations}\label{S0}

We list a few of the important notations that shall be widely used throughout the paper.
\begin{enumerate}
    \item $\mathbf{K}$ will always denote a field, and $R$ will always denote a unital commutative ring.
    \item For an $R$-module $M$, $\sigma(M, R)$ denotes the \textit{covering number} of $M$, i.e., the minimum (cardinal) number of proper $R$-submodules whose union is $M$. A distinguished special case studied below is $\sigma(V, \mathbf{K})$, for a $\mathbf{K}$-vector space $V$.  
    \item For a given finitely generated $R$-module $M$ over a ring $R$, equipped with the \textit{induced Zariski topology} (defined in Section~\ref{S5}),  $\sigma_{\tau}(M, R)$ denotes the minimum (cardinal) number of proper closed subsets, whose union covers the whole space $M$. The analogous quantity for a $\mathbf{K}$-vector space equipped with the induced Zariski topology will be denoted by $\sigma_{\tau}(V, \mathbf{K})$.
    \item Given a ring $R$ and an ideal $I$, $m(I)$ denotes the set of ideals of $R$ containing $I$.
    \item $\mspec(R)$ denotes the set of maximal ideals of a given (unital commutative) ring $R$.
    \item For an $R$-module $M$, $S_{M}$ denotes the set of $\mathfrak{m}\in\mspec(R)$ for which $\dim_{R/\mathfrak{m}}(M/\mathfrak{m}M)\geq 2$.
\end{enumerate}

\section{Introduction}\label{S1}

A well-known and well-studied question in group theory is ``to find the minimum cardinal number $\sigma(G)$ of proper subgroups of a given non-cyclic group $G$ whose union covers $G$." This problem has a vast literature, much of which has been generously listed in the references of \cite{khare2020} (to which we direct any interested reader). One might ask  an analogous question for vector spaces, and in general for modules. The case of vector spaces is classical and well-known (one possible reference is \cite{KHARE20091681}). In this paper, we shall be concerning ourselves with the covering problem for finitely generated modules, and its relation to the version for vector spaces. A few instances where this problem or closely related variants have been addressed are \cite{QB}, \cite{Got}, \cite{Zandt} and most recently \cite{khare2020}. 

In \cite{khare2020}, the authors studied the minimum (cardinal) number of proper submodules required to cover a module $M$ over a unital commutative ring $R$, which we will denote in this paper by $\sigma(M, R)$. The authors of \cite{khare2020} proved the equality $\sigma(M, R)=\min_{\mathfrak{m}\in S_M}|R/\mathfrak{m}|+1$, for several classes of rings $R$ and $R$-modules, and also showed that $S_M=\emptyset$ implies cyclicity in the cases considered. Here $S_M$ is the set of maximal ideals $\mathfrak{m}$ of $R$ for which the quotient module $M/\mathfrak{m}M$ has dimension at least $2$ as a vector space over the residue field $R/\mathfrak{m}$. The authors then asked:

\begin{question}\label{Q1}
In what generality for the ring $R$, is it true that for all finitely generated non-cyclic $R$-modules $M$, we have $\sigma(M, R)=\min_{\mathfrak{m} \in S_M}|R / \mathfrak{m}|+1$?
\end{question} 

In our paper, we study a slightly different variant of this question. Our results come in two flavors -- algebraic and geometric -- both of which necessitate working with different tools and techniques than the papers in the group theory literature. To discuss our results which are of an algebraic flavor, we begin by addressing the covering problem for vector spaces over base fields $\mathbf{K}$ of arbitrary cardinalities (both finite and infinite). We provide an alternate elementary proof (using induction on dimension) for previously known results, by observing an injection from the set of hyperplanes of $\mathbf{K}^2$ to the set of hyperplanes of $\mathbf{K}^n$. The upshot of this method is that it can be easily adapted to the case of finite length semi-simple modules, which will provide a starting point for the analysis of significantly larger classes of modules. For such modules, we will consider the following modified version of Question~\ref{Q1}:

\begin{question}\label{Q2}
For which classes of finitely generated modules $M$ over any commutative ring $R$ with unity, do we have $\sigma(M, R)=\min_{\mathfrak{m} \in S_M}|R / \mathfrak{m}|+1$?
\end{question} 

We first address this problem using algebraic techniques and make use of the notion of dual Goldie dimension, introduced and studied in \cite{dgdi} and \cite{dgdii}. In particular, we show that the desired equality holds for finitely generated modules of finite dual Goldie dimension over arbitrary commutative rings with unity. One can then see that many classes of modules studied in \cite{khare2020} are special cases.

We now come to the main contribution of this paper: to discuss the covering problem from a geometric/topological viewpoint. Noting that proper subspaces of finite-dimensional vector spaces $V$ are Zariski closed, we consider the broader question of covering $V$ by the smallest number of (arbitrary) Zariski closed subsets. In the final section below, we then extend our considerations to Zariski closed subsets of finitely generated $R$-modules -- this continues to diverge from the group-theoretic aspects of covering numbers. In these endeavors, we were also motivated by \cite{KC}, where the authors introduced and investigated the notion of \textit{covering type} of a topological space $X$ from the perspective of algebraic topology. The authors showed that for finite $CW$ complexes, the notion of covering type is equivalent to that of closed covers by subcomplexes. With this equivalence in mind, we pursue a topological analysis of the covering problem for finitely generated modules by drawing a parallel between closed subcomplexes and closed submodules under our topology.

We now outline our `topological/geometric' contributions. As mentioned above, we begin by covering vector spaces by Zariski closed subsets. Utilizing an isomorphism between an $n$-dimensional vector space $V$ over an infinite base field $\mathbf{K}$ and $\mathbf{K}^n$, one can topologize $V$ (canonically) by considering $\mathbf{K}^n$ as the affine $n$-space with Zariski topology, such that the isomorphism map is a homeomorphism. We term this topology on $V$ as the \textit{induced Zariski topology}. This enables one to relate the problem of covering the topological space $V$ by closed subsets to that of covering the vector space $V$ by vector subspaces, since these are closed under the induced Zariski topology.

Finally, we equip $M$ with a compatible topology derived from maps from $M$ to $R/\mathfrak{m}$-modules $M/\mathfrak{m}M$, which we view as an $R/\mathfrak{m}$-vector space with the induced Zariski topology, when certain residue fields of $R$ are infinite. As a result, we obtain a connection between the covering problem and the notion of $\kappa$-Baire spaces for an appropriate cardinal number $\kappa$, which are a generalization of ordinary Baire spaces. Consequently, we provide a sufficient topological criterion for a class of modules to satisfy the equality in Question~\ref{Q2}.

\section{Overview of main results}\label{S2}

In this section, we present the main results proved in this paper. We begin with the covering problem for vector spaces, which we approach from a topological viewpoint in Section~\ref{S3}. First, we provide an alternate proof of the fact $\sigma (V, \mathbf{K})=|\mathbf{K}|+1$, for any finite-dimensional vector space $V$ of dimension at least $2$ over a base field $\mathbf{K}$ of arbitrary cardinality (finite or infinite). Our first main result in this work involves topologizing $V$ with the induced Zariski topology, in the case of an infinite base field. We define $\sigma_{\tau}(V, \mathbf{K})$ to be the minimum (cardinal) number of proper closed subsets required to cover the space $V$, where $V$ is equipped with the Zariski topology.

\begin{theorem}\label{introT1}
    $\sigma_{\tau}(V, \mathbf{K})=\sigma (V, \mathbf{K})=|\mathbf{K}|+1$ for any finite dimensional (at least $2$) vector space $V$ over an infinite field $\mathbf{K}$, with the induced Zariski topology.
\end{theorem}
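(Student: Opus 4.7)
The plan is to establish the chain of inequalities
\[
|\mathbf{K}|+1 \;\le\; \sigma_\tau(V,\mathbf{K}) \;\le\; \sigma(V,\mathbf{K}) \;\le\; |\mathbf{K}|+1.
\]
The right-hand inequality $\sigma(V,\mathbf{K}) \le |\mathbf{K}|+1$ is classical and is re-established earlier in this paper. The middle inequality is immediate: every proper $\mathbf{K}$-subspace of $V$ is the zero set of a non-zero linear form and is therefore $\tau$-closed, so any covering by proper subspaces is automatically a covering by proper $\tau$-closed subsets.

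The substantive content is the lower bound $\sigma_\tau(V,\mathbf{K}) \ge |\mathbf{K}|+1$. Fixing an identification $V \cong \mathbf{K}^n$ with $n \ge 2$, I would first invoke the standard fact that, since $\mathbf{K}$ is infinite, a polynomial vanishing on all of $\mathbf{K}^n$ is identically zero. Consequently every proper $\tau$-closed subset $C \subsetneq \mathbf{K}^n$ is contained in a hypersurface $Z(f) := \{x \in \mathbf{K}^n : f(x) = 0\}$ for some non-zero $f \in \mathbf{K}[x_1,\ldots,x_n]$, so any $\tau$-covering refines to a covering of the form $\mathbf{K}^n = \bigcup_{i \in I} Z(f_i)$ with each $f_i \neq 0$. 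It therefore suffices to bound $|I|$ from below for such hypersurface covers.

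I would then induct on $n$, starting at $n = 1$. For the base case, each $Z(f_i) \subsetneq \mathbf{K}$ is finite; a finite index set $I$ would force $\prod_i f_i \equiv 0$, contradicting that $\mathbf{K}[x]$ is a domain, so $|I|$ must be infinite, and the cardinality count $|\mathbf{K}| = |\bigcup_i Z(f_i)| \le |I| \cdot \aleph_0$ then gives $|I| \ge |\mathbf{K}| = |\mathbf{K}|+1$. For the inductive step $n \ge 2$, the key device is to pass to an affine hyperplane slice $H_b := \{x_n = b\} \cong \mathbf{K}^{n-1}$. Writing $f_i = \sum_j g_{i,j}(x_1,\ldots,x_{n-1})\, x_n^j$, the set $B_i := \{b \in \mathbf{K} : H_b \subseteq Z(f_i)\}$ is cut out by the vanishing (as polynomials in $b$) of each monomial coefficient of $\sum_j g_{i,j}(x_1,\ldots,x_{n-1})\, b^j$; since $f_i \neq 0$, at least one such coefficient is a non-zero polynomial in $b$, so $B_i$ is finite. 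Assuming for contradiction $|I| < |\mathbf{K}|+1$, a cardinality count on $\bigcup_i B_i$ produces a $b \in \mathbf{K} \setminus \bigcup_i B_i$; then $H_b$ is covered by the proper $\tau$-closed subsets $\{Z(f_i) \cap H_b\}_{i \in I}$, contradicting the inductive hypothesis on $\mathbf{K}^{n-1}$.

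The main technical nuisance will be the cardinal-arithmetic check in the last step: when $\mathbf{K}$ is uncountable, $|I| < |\mathbf{K}|$ together with finiteness of each $B_i$ forces $|\bigcup_i B_i| < |\mathbf{K}|$ immediately; but when $\mathbf{K}$ is countably infinite, the analogous inequality requires separately ruling out the case of $|I|$ finite (once more via $\prod_i f_i$ not being identically zero on $\mathbf{K}^n$) before extracting a suitable slice. Once the existence of a good slice is in hand, the remainder of the argument, including the reduction to hypersurfaces and the induction itself, is routine.
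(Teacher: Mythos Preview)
Your proof is correct and follows essentially the same architecture as the paper's Theorem~\ref{T2}: induction on dimension, slice by the parallel hyperplanes $H_b=\{x_n=b\}$, and argue that each closed piece of the cover can swallow only finitely many of the $H_b$. The one genuine difference is in how that finiteness is obtained. The paper keeps arbitrary algebraic sets $V(J_i)$, observes that each $H_b$ contained in $V(J_i)$ forces the irreducible linear polynomial $x_n-b$ to divide every element of $J_i$, and then invokes the UFD property of $\mathbf{K}[x_1,\dots,x_n]$ (via Gauss's lemma) to bound the number of such prime divisors. You instead first reduce to hypersurfaces $Z(f_i)$ and read off the finiteness of $B_i$ directly from the coefficients of $f_i$ as a polynomial in $x_n$; this is more elementary and sidesteps the irreducibility/UFD discussion entirely. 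Your worry about the countable case is harmless but unnecessary: when $|\mathbf{K}|=\aleph_0$, the hypothesis $|I|<|\mathbf{K}|+1=\aleph_0$ already forces $I$ finite, so $\bigcup_i B_i$ is finite and a good slice $b$ exists without appealing to $\prod_i f_i$.
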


\noindent Note that $\sigma_\tau(V, \mathbf{K})\leq \sigma(V, \mathbf{K})\leq |\mathbf{K}|+1$ is easily shown; the nontrivial assertion here is that $\sigma_{\tau}(V, \mathbf{K})\geq |\mathbf{K}|+1$.

One can additionally interpret Theorem~\ref{introT1} as a strengthening of the irreducibility of the Zariski topology on an affine space over an infinite field, which is equivalent to the inequality that $\sigma_{\tau}(V, \mathbf{K})\geq \aleph_0$, where $\aleph_0=|\mathbb{N}|$ is the smallest infinite cardinal. Alternatively, recalling the density of open sets in the Zariski topology, and switching from closed sets to open sets, the above result says that the Zariski topology on an affine $n$-space over an infinite field of cardinality $\kappa$ is a $\kappa$-maximal Baire space (see Definition~\ref{introD1}). This second interpretation shall be useful.

Our next main result is in Section~\ref{S4}, and provides a class of modules that satisfy the equality in Question~\ref{Q2}. For this, we first recall that a submodule $N$ of a given $R$-module $M$ is said to be \textit{small}, if for any other submodule $K$ of $M$, if $N+K=M$ then $K=M$.

\begin{theorem}\label{introT2}
    	Let $M$ be an $R$-module with small Jacobson radical (special case: $M$ is finitely generated) and finite dual Goldie dimension over a ring $R$. Let $S_M$ denote the set of maximal ideals $\mathfrak{m}$ of $R$ such that $\dim_{R/\mathfrak{m}}(M/\mathfrak{m}M)\geq 2$. If $S_M=\emptyset$ and $M$ is finitely generated, then $M$ is cyclic, and hence cannot be covered by proper submodules. Else if $S_M\neq\emptyset$, then $\sigma(M, R)=\min_{\mathfrak{m}\in S_M}|R/\mathfrak{m}|+1$.
\end{theorem}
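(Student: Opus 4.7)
The plan is to sandwich $\sigma(M,R)$ between $\kappa := \min_{\mathfrak{m}\in S_M}|R/\mathfrak{m}|+1$ and itself, via the semisimple quotient $\overline{M} := M/J(M)$. The upper bound $\sigma(M,R) \leq \kappa$ is immediate: for each $\mathfrak{m} \in S_M$ the $R/\mathfrak{m}$-space $M/\mathfrak{m}M$ has dimension $\geq 2$ and so is covered by $|R/\mathfrak{m}|+1$ proper subspaces (Theorem~\ref{introT1} together with its finite-field analogue), whose preimages in $M$ are proper submodules. Smallness of $J(M)$ yields $\sigma(\overline{M},R) \leq \sigma(M,R)$, since any proper $N \subsetneq M$ has $N + J(M) \neq M$ (otherwise $N = M$), so its image in $\overline{M}$ is proper. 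It therefore suffices to prove $\sigma(\overline{M},R) \geq \kappa$, and by the dual Goldie dimension theory (together with smallness of $J(M)$ and finiteness of the dual Goldie dimension) one has a decomposition $\overline{M} \cong \bigoplus_{i=1}^k (R/\mathfrak{m}_i)^{n_i}$ for distinct maximal ideals $\mathfrak{m}_i$, with $n_i = \dim_{R/\mathfrak{m}_i}(M/\mathfrak{m}_i M)$ and $S_M = \{\mathfrak{m}_i : n_i \geq 2\}$. If $S_M = \emptyset$, all $n_i = 1$ and pairwise coprimality of the $\mathfrak{m}_i$ yields via CRT that $\overline{M}$ is cyclic; lifting a generator to $M$ and invoking smallness of $J(M)$ then makes $M$ cyclic, handling the first assertion of the theorem.

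For the nontrivial inequality $\sigma(\overline{M},R) \geq \kappa$ when $S_M \neq \emptyset$, I would fix $\mathfrak{m}_{i_0} \in S_M$ realizing $\kappa$ and decompose $\overline{M} = V \oplus N'$ with $V := (R/\mathfrak{m}_{i_0})^{n_{i_0}}$ (so $\dim V \geq 2$) and $N' := \bigoplus_{i \neq i_0}(R/\mathfrak{m}_i)^{n_i}$. The central algebraic lever is the following decomposition lemma, which I expect to be the main step requiring care: every $R$-submodule $K \subseteq V \oplus N'$ splits as $K = (K \cap V) \oplus (K \cap N')$, since a Chinese-remainder-type identity $1 = a + b$ with $a \in \mathfrak{m}_{i_0} = \operatorname{Ann}(V)$ and $b \in \bigcap_{i\neq i_0}\mathfrak{m}_i = \operatorname{Ann}(N')$ gives $b(v+w) = v$ and $a(v+w) = w$ inside $K$. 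This splitting hinges on orthogonality of the annihilators of the isotypic components, and without it one would struggle to separate the covering behavior across summands (a proper submodule can easily surject onto $V$ without equalling $\overline{M}$).

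Assuming now $\overline{M} = \bigcup_{\alpha \in I} \overline{N}_\alpha$ is a covering by proper submodules with $|I| < \kappa$, write $\overline{N}_\alpha = A_\alpha \oplus B_\alpha$ via the lemma. Fixing $w \in N'$ and intersecting the cover with the coset $V + w$ yields, after translation, $V = \bigcup_{\alpha : w \in B_\alpha} A_\alpha$. If the $A_\alpha$ appearing on the right were all proper subspaces of $V$, this would cover $V$ by fewer than $\kappa$ proper subspaces, contradicting Theorem~\ref{introT1}. Hence for every $w \in N'$ there exists $\alpha$ with $V \subseteq \overline{N}_\alpha$ and $w \in B_\alpha$; collecting such indices into $I_b$ and using properness of each $\overline{N}_\alpha = V \oplus B_\alpha$ forces $B_\alpha \subsetneq N'$, so $N' = \bigcup_{\alpha \in I_b}B_\alpha$ with $|I_b| \leq |I| < \kappa$ proper submodules. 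I would conclude by induction on the dual Goldie dimension: $N'$ has $J(N') = 0$ (trivially small) and strictly smaller dual Goldie dimension $n - n_{i_0}$, while $S_{N'} \subseteq S_M \setminus \{\mathfrak{m}_{i_0}\}$, so its inductive $\kappa$-value is at least $\kappa$. Either $N'$ is cyclic (hence uncoverable by proper submodules, since its generator lies in none) or $\sigma(N',R) \geq \kappa$ by induction; both alternatives contradict the covering of $N'$ just obtained, forcing $|I| \geq \kappa$ and completing the proof.
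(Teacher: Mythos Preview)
Your proof is correct. The reduction from $M$ to $\overline{M}=M/J(M)$ via smallness of $J(M)$, and the handling of the $S_M=\emptyset$ case via CRT plus lifting a generator, match the paper's argument (Theorem~\ref{T5}) essentially verbatim.

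Where you diverge is in the induction for the finite-length semisimple case (the paper's Proposition~\ref{P12}). The paper inducts on length one step at a time: it produces $\kappa$ many length-$(k{-}1)$ submodules of the shape $(\text{hyperplane in the }\mathfrak{m}_1\text{-block})\oplus(\text{rest})$, and argues by pigeonhole (each such $N$ contained in some $M_j$ would force $N=M_j$, too many) that at least one such $N$ has all intersections $N\cap M_j$ proper, then applies the inductive hypothesis to that $N$. You instead strip off an entire isotypic component in one move: your CRT splitting lemma $K=(K\cap V)\oplus(K\cap N')$ decomposes every covering submodule, the vector-space bound on $V$ forces enough of them to contain $V$ in full, and this transfers the cover to $N'$. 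Your route is more structural---orthogonal idempotents do the work and the isotypic decomposition is front and centre---while the paper's is more combinatorial. Both are short; yours takes larger inductive jumps and avoids the hyperplane-counting step entirely.

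One minor citation point: Theorem~\ref{introT1} concerns the Zariski covering number over infinite fields; the input you actually need at both the upper-bound step and the contradiction step (and which must also cover finite residue fields) is the purely algebraic $\sigma(V,\mathbf{K})=|\mathbf{K}|+1$ for $\dim V\geq 2$, recorded in the paper as Theorem~\ref{T1}.
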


The proof will be developed by starting with the case of finitely generated semi-simple modules. As corollaries of this theorem, we obtain previously known classes of modules for which the equality $\sigma(M, R)=\min_{\mathfrak{m} \in S_M}|R / \mathfrak{m}|+1$ holds, a few of which are listed in Section~\ref{S4.2}.

In the final Section~\ref{S5}, we develop the topology on finitely generated modules, to address the covering problem from (to the best of our knowledge) a novel perspective, and in the spirit of Theorem~\ref{introT1}. We will only consider those $R$-modules $M$ for which the residue field $R/\mathfrak{m}$ is infinite whenever $\mathfrak{m}\in S_M$, unless otherwise mentioned. Given such a finitely generated module $M$, we consider $M/\mathfrak{m}M$ (for all $\mathfrak{m}\in \mspec(R)$) with the induced Zariski topology or the discrete subspace topology (defined in Section~\ref{S5}) and topologize $M$ with the coarsest topology making the natural map $q:M\rightarrow \prod_{\mathfrak{m}\in \mspec(R)}M/\mathfrak{m}M$ into a continuous map, where $\prod_{\mathfrak{m}\in \mspec(R)}M/\mathfrak{m}M$ is equipped with the product topology. We shall call this topology to be the \textit{induced Zariski topology} on $M$, as it generalizes the induced Zariski topology on vector spaces.

Define $m(I)$ to be the collection of all maximal ideals $\mathfrak{m}\in\mspec(R)$ containing the ideal $I$ of $R$, for every non-trivial proper ideal $I\subset R$. We topologize $M/IM$ with the coarsest topology which makes the natural map $p_I:M/IM\rightarrow \prod_{\mathfrak{m}\in m(I)}M/\mathfrak{m}M$ continuous, where $\prod_{\mathfrak{m}\in m(I)}M/\mathfrak{m}M$ has the product topology. Let $\mathcal{I}(R)$ denote the poset of all non-trivial proper ideals of $R$ ordered by reverse-inclusion. Then we get an inverse system of spaces $\Sigma_2=((M/IM)_{I\in\mathcal{I}(R)}, (q_{IJ})_{I\preccurlyeq J\in\mathcal{I}(R)})$, where the bonding maps $q_{IJ}$ are the natural continuous (module-)quotient maps. Corresponding to this system, we have an inverse limit space $\varprojlim_{I\in\mathcal{I}(R)}M/IM$. Similarly, considering $\mathcal{F}=\lbrace \sigma\subseteq \mspec(R):$ $|\sigma|<\infty\rbrace$ as a poset ordered by inclusion, and letting $M_\sigma=\prod_{\mathfrak{m}\in \sigma}M/\mathfrak{m}M$, we get another inverse system $\Sigma_1=((M_{\sigma})_{\sigma\in\mathcal{F}}, (p_{ij})_{\sigma_i\preccurlyeq\sigma_j\in\mathcal{F}})$, where the bonding maps $p_{ij}$ are the natural continuous projection maps. Corresponding to this system, we have an inverse limit space $\varprojlim_{\sigma\in\mathcal{F}}M_{\sigma}$. We then have the following homeomorphisms.

\begin{theorem}\label{introT3}
    The topological spaces $\prod_{\mathfrak{m}\in \mspec(R)}M/\mathfrak{m}M$, $\varprojlim_{\sigma\in\mathcal{F}}M_{\sigma}$, and $\varprojlim_{I\in\mathcal{I}(R)}M/IM$ are all homeomorphic via the natural maps.
\end{theorem}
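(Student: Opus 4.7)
My plan is to identify both inverse limits with the reference space $X:=\prod_{\mathfrak{m}\in\mspec(R)}M/\mathfrak{m}M$ via the natural maps, thereby establishing all three pairwise homeomorphisms in one stroke.

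For the identification $X\cong\varprojlim_{\sigma\in\mathcal{F}}M_\sigma$, this is the familiar identification of a product with the inverse limit of its finite sub-products. The directed poset $\mathcal{F}$ (ordered by inclusion) produces an inverse system whose bonding maps are the coordinate projections $p_{ij}\colon M_{\sigma_j}\to M_{\sigma_i}$ for $\sigma_i\subseteq\sigma_j$. By the universal property, there is a natural map $X\to\varprojlim_\sigma M_\sigma$ sending $(y_\mathfrak{m})_\mathfrak{m}$ to the compatible family $\bigl((y_\mathfrak{m})_{\mathfrak{m}\in\sigma}\bigr)_\sigma$; it is bijective because a compatible family is uniquely determined by its singleton-components (by compatibility with the bonding maps to $M_{\{\mathfrak{m}\}}$), and it is a homeomorphism because both sides carry the initial topology for the projections to the finite products $M_\sigma$.

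For the identification $X\cong\varprojlim_{I\in\mathcal{I}(R)}M/IM$, I would define $\rho\colon\varprojlim_I M/IM\to X$ by projecting to the coordinates with $I=\mathfrak{m}$ maximal, which is continuous by the universal property of the product. To see the topologies agree via $\rho$, I would unravel a subbasic open set of $\varprojlim_I M/IM$ inherited from $\prod_I M/IM$: such a set has the form $\{(\bar x_I)_I : \bar x_{I_0}\in p_{I_0}^{-1}(V)\}$ for some $I_0\in\mathcal{I}(R)$ and some open $V\subseteq\prod_{\mathfrak{m}\in m(I_0)}M/\mathfrak{m}M$, since by definition the topology on $M/I_0M$ is the coarsest making $p_{I_0}$ continuous. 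Compatibility within the inverse limit gives $p_{I_0}(\bar x_{I_0})=(\bar x_\mathfrak{m})_{\mathfrak{m}\in m(I_0)}$, so this subbasic open set is exactly the $\rho$-pullback of a cylinder set of $X$ cut out by $V$, and conversely every cylinder set of $X$ arises in this way.

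The main obstacle will be establishing the bijectivity of $\rho$ at the set level, from which the homeomorphism then follows by the subbase calculation above. Since $p_I$ need not be injective for non-radical $I$, one a priori might worry that distinct compatible families project to the same element of $X$. I anticipate the resolution combining (a) a cofinality-type observation that the finite intersections of maximal ideals form a sub-poset of $\mathcal{I}(R)$ linked to $\mathcal{F}$ via CRT, and (b) the coarseness of the topology on each $M/IM$, which forces lifts that are topologically indistinguishable under $p_I$ to yield the same element of the inverse limit once compatibility with all the bonding maps $q_{IJ}$ is imposed. Making this bookkeeping precise, uniformly across all ideals $I$ and in a manner respecting every $q_{IJ}$, is the step I expect to demand the most care.
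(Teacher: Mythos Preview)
Your first identification $X\cong\varprojlim_\sigma M_\sigma$ is exactly how the paper proceeds (its Proposition~\ref{P14}).

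For the second homeomorphism the paper does not attempt your direct map $\rho$. Instead it builds a morphism of inverse systems between $\Sigma_1$ and $\Sigma_2$ via the monotone map $\phi\colon\mathcal{F}\to\mathcal{I}(R)$, $\sigma\mapsto\bigcap_{\mathfrak{m}\in\sigma}\mathfrak{m}$, together with the Chinese-remainder isomorphisms $\psi_\sigma\colon M_\sigma\to M/\phi(\sigma)M$; these are homeomorphisms because the topology on $M/\phi(\sigma)M$ is by definition the pullback along $p_{\phi(\sigma)}$, and $m(\phi(\sigma))=\sigma$. The paper then invokes a standard result (Engelking, Proposition~2.5.10): a morphism of inverse systems whose index map has cofinal image and whose component maps are homeomorphisms induces a homeomorphism of the limits. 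This is precisely your anticipated ingredient~(a), packaged so that bijectivity and the topology are handled in one stroke---no bookkeeping over individual bonding maps $q_{IJ}$ is required, and your ingredient~(b) plays no role.

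There is, moreover, a genuine gap in your plan for~(b). The inverse limit of topological spaces has as its underlying set the ordinary set-theoretic inverse limit of the underlying sets; coarseness of the topology on $M/IM$ does not identify points. If $(\bar x_I)_I$ and $(\bar y_I)_I$ are compatible families with $\bar x_{I_0}\neq\bar y_{I_0}$ for some $I_0$, they remain distinct elements of $\varprojlim_I M/IM$ even when $p_{I_0}(\bar x_{I_0})=p_{I_0}(\bar y_{I_0})$. So ``topological indistinguishability under $p_I$'' cannot establish injectivity of $\rho$; the argument must pass through the cofinality route~(a), as the paper does.
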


 Theorem~\ref{introT3} shows that the different ways of combining the topology on the ``simpler" factors all lead to the ``same" (i.e. up to homeomorphism) space.  This theorem also enables one to consider the topology as a $T1$ generalization of the notion of the profinite topology, when $S_M:=\lbrace \mathfrak{m}\in \mspec(R):$ $\dim_{R/\mathfrak{m}}(M/\mathfrak{m}M)\geq2\rbrace$ equals $\mspec(R)$. This is because the topologies on the factors $M_\sigma$ degenerate to discrete topologies if $M/\mathfrak{m}M$ are finite spaces for all $\mathfrak{m}\in \mspec(R)$, in which case one obtains the profinite topology.

The next results focus on special cases. We observe that a module $M$ has finite dual Goldie dimension if and only if $\prod_{\mathfrak{m}\in \mspec(R)}M/\mathfrak{m}M$ is a finite product. Consequently, we have:

\begin{theorem}\label{introT4}
   Let $M$ be a finitely generated non-cyclic $R$-module with finite dual Goldie dimension over a commutative ring $R$, such that the residue field $R/\mathfrak{m}$ are infinite for all $\mathfrak{m}\in S_M$. Equip $M$ with the induced Zariski topology as defined above. Then $\sigma_{\tau}(M, R)=\min_{\mathfrak{m}\in S_M}|R/\mathfrak{m}|+1$. Consequently, $(a)$ the induced Zariski topology on $M$ makes it into a $\kappa_M$-maximal Baire space, for $\kappa_M=\min_{\mathfrak{m}\in S_M}|R/\mathfrak{m}|+1 $; and $(b)$ the covering number of $M$ is $\sigma(M, R)=\kappa_M$.
\end{theorem}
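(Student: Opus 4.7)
The plan is to identify $\sigma_\tau(M,R)$ with the covering number of a finite product of irreducible Zariski vector spaces and then push the lower bound of Theorem~\ref{introT1} through that product by a slice induction. The finite dual Goldie hypothesis forces only finitely many $\mathfrak{m}$ to satisfy $M/\mathfrak{m}M\neq 0$; comaximality of distinct maximals then makes the natural map $q:M\to P:=\prod_{\mathfrak{m}\in\mspec(R)}M/\mathfrak{m}M$ a surjection (from $\mathfrak{m}_i+\mathfrak{m}_j=R$ one extracts $\mathfrak{m}_iM+\mathfrak{m}_jM=M$, and then applies a CRT-style argument), and since $M$ carries the initial topology with respect to $q$, proper closed subsets of $M$ correspond bijectively with proper closed subsets of $P$, so $\sigma_\tau(M,R)=\sigma_\tau(P)$. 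Combined with the factor-topology conventions of Section~\ref{S5}, which strip the factors indexed by $\mathfrak{m}\notin S_M$ of any nontrivial closed subsets, the calculation further reduces to computing $\sigma_\tau(P_S)$, where $P_S:=\prod_{\mathfrak{m}\in S_M}M/\mathfrak{m}M$ and each factor carries the induced Zariski topology of a vector space of dimension $\geq 2$ over the infinite field $R/\mathfrak{m}$.

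The upper bound $\sigma_\tau(M,R)\leq\kappa_M$ is immediate: selecting $\mathfrak{m}_0\in S_M$ achieving the minimum and pulling back a Theorem~\ref{introT1} covering of $M/\mathfrak{m}_0M$ along the continuous surjection $q_{\mathfrak{m}_0}:M\to M/\mathfrak{m}_0M$ yields a cover of $M$ by $|R/\mathfrak{m}_0|+1=\kappa_M$ proper closed subsets. For the lower bound, each factor of $P_S$ is irreducible (Zariski on an affine space over an infinite field), and finite products of irreducibles remain irreducible by the standard slice argument. The plan is to induct on the number $k$ of factors of $P_S$: the base $k=1$ is Theorem~\ref{introT1}, and for the step I would write $P_S=X\times Y$ with $X$ a single factor and $Y$ the remainder, and consider any cover $P_S=\bigcup_{\alpha\in A}C_\alpha$ by proper closed subsets. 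For each $y\in Y$ the slice $X\times\{y\}\cong X$ is covered by $\{C_\alpha\cap(X\times\{y\})\}_\alpha$, so either some $y$ admits no $\alpha$ with $X\times\{y\}\subseteq C_\alpha$ (giving $|A|\geq\sigma_\tau(X)\geq\kappa_M$ by Theorem~\ref{introT1}), or every $y$ is trapped in some $C_\alpha$, in which case $B_\alpha:=\{y\in Y:X\times\{y\}\subseteq C_\alpha\}$ satisfies $X\times\overline{B_\alpha}=\overline{X\times B_\alpha}\subseteq C_\alpha$, so $B_\alpha$ is closed in $Y$ and proper (since $C_\alpha$ is proper), and $Y=\bigcup_\alpha B_\alpha$ forces $|A|\geq\sigma_\tau(Y)\geq\kappa_M$ by induction.

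Combining the two bounds yields $\sigma_\tau(M,R)=\kappa_M$; statement (a) is then the unpacked definition of the $\kappa_M$-maximal Baire property, and statement (b) follows by pinching $\sigma(M,R)$ between the now-equal outer terms of the chain $\sigma_\tau(M,R)\leq\sigma(M,R)\leq\kappa_M$, whose right inequality is the familiar observation that each affine hyperplane of $M/\mathfrak{m}_0M$ pulls back to a proper submodule of $M$ (compare Theorem~\ref{introT2}). The main obstacle in this plan is the inductive slice step: the closedness of $B_\alpha$ depends on the product-topology identity $\overline{X\times B}=X\times\overline{B}$, and the dichotomy returns the correct bound $\kappa_M$ in both branches only because every factor of $P_S$ has $\sigma_\tau\geq\kappa_M$ by the minimality defining $\kappa_M$.
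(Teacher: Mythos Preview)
Your argument is correct and reaches the same conclusion, but the route differs from the paper's. The paper computes $\sigma_\tau$ by working directly with basic open sets: writing $\mathcal{U}_i = q^{-1}(\prod_j \mathcal{O}_{ji})$, one has $\bigcap_i \mathcal{U}_i = q^{-1}\bigl(\prod_j \bigcap_i \mathcal{O}_{ji}\bigr)$, and this is non-empty iff each factor-intersection $\bigcap_i \mathcal{O}_{ji}$ is non-empty, which for $\mathfrak{m}_j \in S_M$ reduces componentwise to Theorem~\ref{introT1} with no induction. Your slice induction on the number of factors is a legitimate alternative and is closer in spirit to how one proves irreducibility of products in algebraic geometry; it trades the one-shot componentwise reduction for a dichotomy argument that would transfer to settings where closed sets are not so neatly controlled by basic boxes. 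Two small imprecisions to clean up: the discrete-subspace factors \emph{do} carry the proper closed set $\{0\}$, so they are not literally stripped of nontrivial closed subsets---the operative fact is that their non-empty opens all contain $(M/\mathfrak{m}M)\setminus\{0\}$ and hence always meet, so they contribute nothing to the intersection count; and for the inequality $\sigma(M,R)\leq\kappa_M$ you want \emph{linear} hyperplanes (subspaces through $0$), not affine ones, in order to pull back to submodules. Finally, for (a) the paper separately argues that every non-empty open in $M$ is dense (via the openness of $q$ and density of opens in each factor); you are using this when you say (a) is just the unpacked definition, since $\kappa_M$-Baire requires the $<\kappa_M$-fold intersections to be \emph{dense}, not merely non-empty, so it is worth making that step explicit.
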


Here, we recall the definition of a $\kappa$-Baire and $\kappa$-maximal Baire space.

\begin{definition}\label{introD1}
\begin{enumerate}
    \item  A topological space $X$ is a \textit{$\kappa$-Baire space} if for any collection of dense open subsets $\lbrace \mathcal{O}_i\rbrace_{i\in I}$ of $X$ such that $|I|=\mathfrak{I}<\kappa$, $\cap_{i\in I}\mathcal{O}_i$ is dense in $X$ as well.
    
    \item A topological space $X$, which is a $\kappa$-Baire space, but is not a $\aleph$-Baire space for any cardinal number $\aleph>\kappa$, is said to be a \textit{$\kappa$-maximal Baire space}.
\end{enumerate}
    \end{definition}

\begin{remark}
A `naive' way to discuss the equality $\sigma_\tau(M,R) = \sigma(M,R) =
\min_{\mathfrak{m} \in S_M} |R/\mathfrak{m}|+1$ is as follows: if $M$ is finitely generated and $N \subsetneq M$ is a
proper submodule of $M$, then the closure of $N$ in $M$, under the induced Zariski topology, is a subset of any proper maximal submodule $\tilde{N}$ containing $N$  (since all maximal submodules of $M$ are closed). Therefore, the
inequalities
\[
\sigma_\tau(M,R) \leq \sigma(M,R) \leq \min_{\mathfrak{m} \in S_M}
|R/\mathfrak{m}|+1
\]
are rather straightforward, where $S_M$ is as above. The
contribution in \cite{khare2020} was to show that the second inequality
is in fact an equality for various classes of rings $R$ and $R$-modules
$M$. In this paper, (a) we show this equality holds for somewhat larger classes of
finitely generated modules $M$; and (b) we then show that the
\textit{first} inequality $\sigma_\tau(M,R) \leq \sigma(M,R)$ is also an
equality, under reasonable technical assumptions.
\end{remark}

Returning to the above discussion, we next use the characterization of $\kappa$-Baire spaces as those which are not of first $\kappa$-category (Definition~\ref{D7}), and obtain more generally:

\begin{proposition}\label{introP1}
     	Let $M$ be a finitely generated $R$-module with the induced Zariski topology, and let $\kappa_M=\min_{\mathfrak{m}\in S_M}|R/\mathfrak{m}|+1$. If the image of $M$ under the map $q: M\rightarrow\prod_{\mathfrak{m}\in \mspec(R)}M/\mathfrak{m}M$ is not of first $\kappa_M$-category, then $\sigma_{\tau}(M, R)=\kappa_M$. In particular, $\sigma(M, R)= \kappa_M$.
\end{proposition}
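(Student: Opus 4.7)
Proof proposal: The inequalities $\sigma_\tau(M,R) \le \sigma(M,R) \le \kappa_M$ hold by the Remark, so the content of the proposition is the reverse inequality $\sigma_\tau(M,R) \ge \kappa_M$, from which the ``in particular'' claim follows. I would argue by contradiction: assume $M = \bigcup_{\alpha < \lambda} C_\alpha$ with $\lambda < \kappa_M$ and each $C_\alpha$ a proper closed subset in the induced Zariski topology. Since this topology is the initial topology for $q$, each $C_\alpha = q^{-1}(D_\alpha)$ for some closed $D_\alpha \subseteq \prod_{\mathfrak{m}} M/\mathfrak{m}M$, and $q(C_\alpha) = D_\alpha \cap q(M)$ is a proper closed subset of $q(M)$ in the subspace topology. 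Applying $q$ to the cover gives $q(M) = \bigcup_{\alpha < \lambda} q(C_\alpha)$, and the plan is to show each $q(C_\alpha)$ is \emph{nowhere dense} in $q(M)$; granting this, $q(M)$ is exhibited as a union of fewer than $\kappa_M$ nowhere-dense subsets of itself, i.e., is of first $\kappa_M$-category, contradicting the hypothesis.

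To establish the nowhere-density claim I would use two inputs. First, the Chinese Remainder Theorem applied to finite subsets of $\mspec(R)$ makes every finite projection $M \to \prod_{\mathfrak{m} \in \sigma} M/\mathfrak{m}M$ surjective, whence $q(M)$ is dense in the ambient product. Second, for each $\mathfrak{m} \in S_M$ the factor $M/\mathfrak{m}M$ is an affine space of dimension $\ge 2$ over the infinite field $R/\mathfrak{m}$ in its Zariski topology, hence irreducible, as already used in the proof of Theorem~\ref{introT1}. A direct manipulation with basic opens then identifies the interior of $q(C_\alpha)$ in $q(M)$ with the trace on $q(M)$ of the interior of $D_\alpha$ in the product; moreover, any nonempty basic open contained in $D_\alpha$ is dense in the product by the irreducibility of the $S_M$-factors, so its intersection with $q(M)$ is dense in $q(M)$. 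This forces the closed set $q(C_\alpha)$ to equal $q(M)$, i.e., $C_\alpha = M$, contradicting properness.

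The main obstacle I anticipate is the presence of factors $\mathfrak{m} \notin S_M$ whose residue fields are finite, for which $M/\mathfrak{m}M$ may carry a (possibly reducible) topology that disrupts the product-irreducibility step. I would bypass this by factoring $q$ through the sub-product $\prod_{\mathfrak{m} \in S_M} M/\mathfrak{m}M$, observing that $\kappa_M = \min_{\mathfrak{m} \in S_M}|R/\mathfrak{m}|+1$ depends only on $S_M$; the hypothesis that $q(M)$ is not of first $\kappa_M$-category transfers to the sub-product image, since the coordinate projection is open and preimages under it of nowhere-dense sets are again nowhere dense. Within this sub-product the density-and-irreducibility argument of the previous paragraph applies cleanly, yielding the desired contradiction and completing the proof.
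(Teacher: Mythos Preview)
Your argument is correct and, at its core, uses the same two ingredients as the paper: density of $q(M)$ in the product (Proposition~\ref{P16}) and the fact that every nonempty open subset of the product is dense. The paper packages these via the $\kappa$-Baire machinery: it first proves that $q(M)$ is $\kappa_M$-Baire if and only if it is not of first $\kappa_M$-category (Corollary~\ref{C14}, via the $\kappa$-analogue of the Haworth--McCoy criterion in Lemma~\ref{L7}), then transfers the $\kappa_M$-Baire property from $q(M)$ to $M$ (Remark~\ref{R12}(2)), and finally invokes Remark~\ref{R12}(3) to conclude $\sigma_\tau(M,R)=\sigma(M,R)=\kappa_M$. You instead argue directly that a closed cover of $M$ of size $<\kappa_M$ pushes forward to a decomposition of $q(M)$ into fewer than $\kappa_M$ nowhere-dense sets, which is cleaner and bypasses Lemma~\ref{L7} entirely. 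Your identification of $\mathrm{int}_{q(M)}(D_\alpha\cap q(M))$ with $\mathrm{int}(D_\alpha)\cap q(M)$ via density of $q(M)$ is the right way to make this step precise.

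Your third paragraph is unnecessary. The standing hypothesis in the relevant section already assumes $R/\mathfrak{m}$ is infinite for every $\mathfrak{m}\in S_M$, so those factors are irreducible. Moreover, for $\mathfrak{m}\notin S_M$ the factor $M/\mathfrak{m}M$ carries the \emph{discrete subspace topology} of Definition~\ref{factordef}, whose only proper closed set is $\{0\}$; this space is irreducible as well (its two nonempty open sets meet), so every nonempty open in the full product is already dense without passing to a sub-product. The paper notes this explicitly just after Definition~\ref{factordef}.
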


Noting that $\prod_{\mathfrak{m}\in m(I)}M/\mathfrak{m}M$ is a $\kappa_M$-Baire space, and since we are interested in $q(M)$ when it is $\kappa_M$-Baire itself, we obtain a characterization of surjectivity of $q$ in a special case:

\begin{proposition}\label{introP2}
    	If $q(M)=\prod_{\mathfrak{m}\in \mspec(R)}M/\mathfrak{m}M$, i.e., $q$ is surjective, then $M$ is compact. Under the assumption that the set $S_M$ equals $\mspec(R)$, the converse holds: if $M$ is a compact space under its induced Zariski topology, then $q(M)=\prod_{\mathfrak{m}\in \mspec(R)}M/\mathfrak{m}M$, i.e., $q$ is surjective. 
\end{proposition}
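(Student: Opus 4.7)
My plan is to handle the two implications separately, each relying on the fact that the induced Zariski topology on $M$ is the \emph{initial} topology with respect to $q \colon M \to \prod_{\mathfrak{m} \in \mspec(R)} M/\mathfrak{m}M$; equivalently, every open subset of $M$ has the form $q^{-1}(U)$ for some open $U$ in the product. For the forward implication I would first observe that each factor $M/\mathfrak{m}M$ is compact (Zariski topology on a finite-dimensional vector space is Noetherian, hence compact; the discrete topology is used only on finite factors), so that $\prod_\mathfrak{m} M/\mathfrak{m}M$ is compact by Tychonoff's theorem. Given any open cover $\{q^{-1}(U_i)\}_{i\in I}$ of $M$, the identity $q^{-1}(\bigcup_i U_i) = M$ combined with the surjectivity of $q$ forces $\bigcup_i U_i$ to exhaust the whole product; extracting a finite subcover by compactness of the product and pulling it back then gives a finite subcover of $M$.

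For the converse, the hypothesis $S_M = \mspec(R)$ guarantees that every factor $M/\mathfrak{m}M$ carries the Zariski topology on a positive-dimensional affine space and in particular is $T_1$ -- this is the only place the hypothesis really enters. I would fix an arbitrary $\alpha = (\alpha_\mathfrak{m})_{\mathfrak{m}} \in \prod_\mathfrak{m} M/\mathfrak{m}M$, and aim to produce $m \in M$ with $q(m) = \alpha$. Setting $C_\mathfrak{m} := q_\mathfrak{m}^{-1}(\{\alpha_\mathfrak{m}\})$ with $q_\mathfrak{m} := \pi_\mathfrak{m} \circ q$, each $C_\mathfrak{m}$ is closed in $M$ (by continuity of $q_\mathfrak{m}$ together with the $T_1$ property of the target), and the desired $m$ is precisely an element of $\bigcap_\mathfrak{m} C_\mathfrak{m}$.

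The heart of the proof will be verifying the finite intersection property of $\{C_\mathfrak{m}\}_\mathfrak{m}$: for any finite $\sigma \subseteq \mspec(R)$, pairwise coprimality of the maximal ideals in $\sigma$ yields $R/\bigcap_{\mathfrak{m} \in \sigma}\mathfrak{m} \cong \prod_{\mathfrak{m} \in \sigma} R/\mathfrak{m}$ as rings, by the usual Chinese Remainder Theorem. Tensoring with $M$ (and using that tensor commutes with finite direct sums) then produces a surjection $M \twoheadrightarrow \prod_{\mathfrak{m} \in \sigma} M/\mathfrak{m}M$, so the tuple $(\alpha_\mathfrak{m})_{\mathfrak{m} \in \sigma}$ is hit and $\bigcap_{\mathfrak{m} \in \sigma} C_\mathfrak{m} \neq \emptyset$. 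Invoking the finite-intersection-property characterization of compactness of $M$ will then deliver $\bigcap_{\mathfrak{m} \in \mspec(R)} C_\mathfrak{m} \neq \emptyset$, completing the proof. The only real obstacle I anticipate is the module-level CRT step just described; once that is in hand, the rest is a standard compactness-plus-FIP exercise.
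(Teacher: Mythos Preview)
Your proposal is correct and follows essentially the same route as the paper: Tychonoff for the forward direction, and the finite intersection property combined with the Chinese Remainder Theorem for the converse, with the $T_1$ property of the factors (guaranteed by $S_M=\mspec(R)$) ensuring that the fibers $q_\mathfrak{m}^{-1}(\alpha_\mathfrak{m})$ are closed. One small terminological slip: for $\mathfrak{m}\notin S_M$ the paper does \emph{not} use the discrete topology on $M/\mathfrak{m}M$, nor are such factors necessarily finite; rather, it uses the ``discrete subspace topology'' whose only closed sets are $\emptyset$, $\{0\}$, and $M/\mathfrak{m}M$ --- still compact, but not $T_1$, which is precisely why the hypothesis $S_M=\mspec(R)$ is needed for the converse.
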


The topology we define on a finitely generated $R$-module $M$, to some extent, behaves better under the assumption $S_M=\lbrace \mathfrak{m}\in \mspec(R):$ $\dim_{R/\mathfrak{m}}(M/\mathfrak{m}M)\geq2\rbrace$ equals $\mspec(R)$. Letting $T:=R\setminus\bigcup_{\mathfrak{m}\in S_M}\mathfrak{m}$, one can consider the $T^{-1}R$-module $T^{-1}M$, i.e. localization of $M$ at the multiplicative set $T$. Then one naturally has the following equality of sets:
$$S_{T^{-1}M}:=\lbrace T^{-1}\mathfrak{m}\in \mspec(T^{-1}R): \ \dim_{(T^{-1}R/T^{-1}\mathfrak{m})}(T^{-1}M/T^{-1}(\mathfrak{m}M))\geq2\rbrace=\mspec(T^{-1}R).$$
Topologizing $T^{-1}M$ with the induced Zariski topology, as a module over $T^{-1}R$, we can then relate $T^{-1}M$ and $M$ as topological spaces:

\begin{proposition}\label{introP3}
  Let $M$ be a finitely generated $R$-module, topologized with the induced Zariski topology. Let $S_M=\lbrace \mathfrak{m}\in \mspec(R):$ $\dim_{R/\mathfrak{m}}(M/\mathfrak{m}M)\geq2\rbrace$ and $T=R\setminus\bigcup_{\mathfrak{m}\in S_M}\mathfrak{m}$. If $M$ is a $\kappa$-Baire space with its induced Zariski topology (for any infinite cardinal $\kappa$), then the $T^{-1}R$-module $T^{-1}M$, with its induced Zariski topology is also a $\kappa$-Baire space.
\end{proposition}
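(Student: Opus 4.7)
The plan is to factor the localization map $\ell : M \to T^{-1}M$ through its image, showing that $\ell(M)$ is dense in $T^{-1}M$ and that the surjection $\ell : M \twoheadrightarrow \ell(M)$ (with $\ell(M)$ carrying the subspace topology from $T^{-1}M$) is continuous and open. Two standard transfer principles then yield the conclusion: continuous open surjections preserve $\kappa$-Bairness, and a dense subspace that is $\kappa$-Baire forces the ambient space to be $\kappa$-Baire.

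For the setup, I use the identifications $\mspec(T^{-1}R)\leftrightarrow S_M$ and $T^{-1}M/T^{-1}\mathfrak{m}\,T^{-1}M \cong M/\mathfrak{m}M$ (as $R/\mathfrak{m}$-vector spaces with the induced Zariski topology) for $\mathfrak{m}\in S_M$, noted in the paragraph preceding the proposition. These furnish a commutative square $q'\circ\ell = \pi\circ q$, where $\pi$ is the projection from $\prod_{\mathfrak{m}\in\mspec(R)}M/\mathfrak{m}M$ onto its $S_M$-coordinates. Continuity of $\ell$ is immediate: the topology on $T^{-1}M$ is initial with respect to $q'$, and sub-basic opens in $T^{-1}M$ pull back through the square to sub-basic opens in $M$. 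Density of $\ell(M)$ in $T^{-1}M$ reduces to the claim that every nonempty basic open of $T^{-1}M$ meets $\ell(M)$, which follows because such a basic open constrains residues modulo only finitely many $\mathfrak{m}_1,\ldots,\mathfrak{m}_k\in S_M$, and the natural map $M \to \prod_{i=1}^k M/\mathfrak{m}_iM$ is surjective by the Chinese Remainder Theorem.

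The heart of the argument is openness of $\ell : M \to \ell(M)$. For a basic open
\[
U \;=\; \bigl\{x\in M : x \bmod \mathfrak{m}_i M \in V_{\mathfrak{m}_i},\ i = 1,\ldots,n\bigr\},
\]
I partition indices into $F_1 = \{i : \mathfrak{m}_i\in S_M\}$ and $F_2 = \{i : \mathfrak{m}_i\notin S_M\}$, and aim to establish
\[
\ell(U) \;=\; \ell(M) \cap (q')^{-1}\!\Bigl(\,\textstyle\prod_{i\in F_1} V_{\mathfrak{m}_i} \,\times \prod_{\mathfrak{m}\in S_M\setminus\{\mathfrak{m}_i : i\in F_1\}} M/\mathfrak{m}M\,\Bigr),
\]
which is open in the subspace topology. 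The inclusion $\subseteq$ is routine. For $\supseteq$, given $y$ in the right-hand side, pick any $x_0\in M$ with $\ell(x_0) = y$ (so $x_0$ already satisfies the $F_1$-conditions) and adjust it by some $z\in\ker\ell$ to arrange the $F_2$-conditions. Two module-theoretic facts drive this adjustment. First, $\ker\ell \subseteq \mathfrak{m}M$ for every $\mathfrak{m}\in S_M$, because each $t\in T$ is a unit modulo such an $\mathfrak{m}$, so $tz = 0$ forces $z\in\mathfrak{m}M$. Second, the natural map $\ker\ell \to M/\mathfrak{m}M$ is surjective for every $\mathfrak{m}\notin S_M$ with $M/\mathfrak{m}M \neq 0$, because $T\cap\mathfrak{m}\neq\emptyset$ forces $T^{-1}(M/\mathfrak{m}M) = 0$ and hence $M = \mathfrak{m}M + \ker\ell$. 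The required simultaneous adjustment for $F_2$ is supplied by the Chinese Remainder Theorem \emph{inside} $\ker\ell$: the coprimality identities $\ker\ell\cap \mathfrak{m}_jM + \ker\ell\cap\mathfrak{m}_{j'}M = \ker\ell$ for distinct $j,j'\in F_2$ follow from writing $1 = a+b$ with $a\in\mathfrak{m}_j$, $b\in\mathfrak{m}_{j'}$, since $ak,bk\in\ker\ell$ whenever $k\in\ker\ell$. The first fact then guarantees that the $F_1$-conditions remain undisturbed.

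With $\ell : M \twoheadrightarrow \ell(M)$ established as a continuous open surjection, $\ell(M)$ inherits $\kappa$-Bairness from $M$ by the standard argument (preimages of $<\kappa$ dense opens of $\ell(M)$ are dense opens of $M$, so their intersection is dense in $M$ and pushes forward to a dense set in $\ell(M)$ via the open surjection $\ell$). Finally, density of $\ell(M)$ in $T^{-1}M$ promotes this to $T^{-1}M$: any family of $<\kappa$ dense opens of $T^{-1}M$ restricts to $<\kappa$ dense opens of $\ell(M)$, whose common intersection is dense in $\ell(M)$ and hence in $T^{-1}M$. The principal obstacle is the openness of $\ell$ in the third paragraph, resting on the interplay of the two module-theoretic facts with the finite-support structure of basic opens; both facts crystallize the way $T$ is designed to preserve exactly the information carried by the maximal ideals in $S_M$.
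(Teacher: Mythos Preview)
Your openness argument has a genuine gap. Your ``second fact'' --- that $\ker\ell \to M/\mathfrak{m}M$ is surjective for $\mathfrak{m}\notin S_M$ --- does not follow from $T^{-1}(M/\mathfrak{m}M)=0$. The vanishing $T^{-1}(M/\mathfrak{m}M)=0$ says only that each $m\in M$ satisfies $tm\in\mathfrak{m}M$ for some $t\in T$; it does \emph{not} force $tm=0$ in $M$, which is what membership in $\ker\ell$ requires. Concretely, take $R=k[x]$ with $k$ an infinite field and $M=R\oplus R/(x)$. Then $S_M=\{(x)\}$, $T=R\setminus(x)$, and $\ker\ell=0$ because every $t\in T$ acts as a nonzerodivisor on both summands (on $R/(x)\cong k$ it acts as the nonzero scalar $t(0)$). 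For $\mathfrak{m}=(x-1)\notin S_M$ one has $M/\mathfrak{m}M\cong k\neq 0$, yet $\ker\ell\to M/\mathfrak{m}M$ is the zero map. In this same example $\ell:M\to\ell(M)$ is genuinely \emph{not} open: since $\ell$ is injective, the subspace topology on $\ell(M)\cong M$ is the initial topology for $(f,\bar g)\mapsto (f(0),\bar g)\in k^2$, and the basic open $U=\{(f,\bar g):f(1)\neq 0\}$ of $M$ is not a union of fibres of this map (e.g.\ $(1,0)\in U$ and $(1-x,0)\notin U$ have the same image $(1,0)$). So the displayed equality for $\ell(U)$ fails and the whole openness step collapses.

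The paper sidesteps this by never claiming openness of $\ell$. It first replaces $M$ and $T^{-1}M$ by their images $q(M)$ and $q_T(T^{-1}M)$ in the respective products (harmless by Remark~\ref{R12}(2)), and then uses the coordinate projection $f_\pi:\prod_{\mathfrak{m}\in\mspec(R)}M/\mathfrak{m}M\to\prod_{\mathfrak{m}\in S_M}M/\mathfrak{m}M$, which is open and continuous for free. The commutative square gives $f_\pi(q(M))\subseteq q_T(T^{-1}M)$; Lemma~\ref{bairelemma} transfers $\kappa$-Baireness from $q(M)$ to $f_\pi(q(M))$; and then density together with the feature that every nonempty open is dense pushes $\kappa$-Baireness up from $f_\pi(q(M))$ to the larger $q_T(T^{-1}M)$. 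In short, the open map you need lives between the product spaces, not between $M$ and $T^{-1}M$; your two ``transfer principles'' and the density argument are fine, but they must be fed the projection $f_\pi$ rather than $\ell$.
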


This result has a corollary that demonstrates its significance for Question~\ref{Q2}.

\begin{corollary}
Let $M$ be a finitely generated $R$-module (with the induced Zariski topology), which satisfies the equality $\sigma_\tau(M, R)=\sigma(M, R)=\min_{\mathfrak{m}\in S_M}|R/\mathfrak{m}|+1$. Letting $T=R\setminus\bigcup_{\mathfrak{m}\in S_M}\mathfrak{m}$, we have that the $T^{-1}R$-module $T^{-1}M$ (with the induced Zariski topology) also satisfies the equality $$\sigma_\tau(T^{-1}M, T^{-1}R)=\sigma(T^{-1}M, T^{-1}R)=\min_{T^{-1}\mathfrak{m}\in \mspec(T^{-1}R)}|T^{-1}R/T^{-1}\mathfrak{m}|+1 = \min_{\mathfrak{m}\in S_M}|R/\mathfrak{m}|+1.$$
\end{corollary}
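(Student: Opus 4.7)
My plan is to use Proposition~\ref{introP3} to transfer a Baire-category property from $M$ to its localization $T^{-1}M$, and then apply Proposition~\ref{introP1} on the localized side to deduce the desired covering-number equality.

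First, I would establish that the hypothesis $\sigma_\tau(M,R)=\kappa_M$ implies that $q(M)$ is not of first $\kappa_M$-category (and hence $M$ itself is $\kappa_M$-Baire). One direction is precisely Proposition~\ref{introP1}. For the converse, suppose for contradiction that $q(M) = \bigcup_{i\in I}N_i$ with $|I| < \kappa_M$ and each $N_i$ nowhere dense in $q(M)$. Then each closure $\overline{N_i}$ in $q(M)$ is a proper closed subset, so each pullback $q^{-1}(\overline{N_i})$ is a proper closed subset of $M$, and their union covers $M$; this would contradict $\sigma_\tau(M,R)\geq\kappa_M$. Since the topology on $M$ is the initial one making $q$ continuous, the open-set lattice of $M$ coincides (via $q^{-1}$) with that of $q(M)$, so the $\kappa_M$-Baire property passes between $q(M)$ and $M$ verbatim.

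Second, Proposition~\ref{introP3} then gives that $T^{-1}M$ is $\kappa_M$-Baire in its induced Zariski topology. To invoke Proposition~\ref{introP1} on $T^{-1}M$ I would next identify $\kappa_{T^{-1}M}$. Using the standard bijection $\mathfrak{m}\leftrightarrow T^{-1}\mathfrak{m}$ between $\{\mathfrak{m}\in\mspec(R) : \mathfrak{m}\cap T = \emptyset\} = S_M$ and $\mspec(T^{-1}R)$, together with the canonical isomorphisms $T^{-1}R/T^{-1}\mathfrak{m}\cong R/\mathfrak{m}$ and $T^{-1}M/T^{-1}\mathfrak{m}(T^{-1}M)\cong M/\mathfrak{m}M$, the ``$\dim\geq 2$'' condition is preserved, so $S_{T^{-1}M}=\mspec(T^{-1}R)$ and
\[
\kappa_{T^{-1}M} \;=\; \min_{T^{-1}\mathfrak{m}\in\mspec(T^{-1}R)} |T^{-1}R/T^{-1}\mathfrak{m}|+1 \;=\; \min_{\mathfrak{m}\in S_M}|R/\mathfrak{m}|+1 \;=\; \kappa_M.
\]

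Third, the open-set-lattice argument of step one, now applied to $T^{-1}M$ and its natural map $q_T$, shows that $q_T(T^{-1}M)$ inherits the $\kappa_M$-Baire property, i.e.\ is not of first $\kappa_{T^{-1}M}$-category. Proposition~\ref{introP1} then yields $\sigma_\tau(T^{-1}M, T^{-1}R)=\kappa_{T^{-1}M}=\kappa_M$, and combining with the always-true inequalities $\sigma_\tau(T^{-1}M, T^{-1}R)\leq \sigma(T^{-1}M, T^{-1}R) \leq \kappa_{T^{-1}M}$ (from the introductory remark) forces the triple equality claimed. The main obstacle I foresee is the converse direction in the first step: one must verify carefully that the equality $\sigma_\tau(M,R)=\kappa_M$ really does force $q(M)$ to be $\kappa_M$-Baire, which requires that closures of nowhere dense subsets of $q(M)$ remain proper and that the initial-topology construction faithfully pulls proper closed covers back from $q(M)$ to $M$ — this compatibility is the technical heart of the argument.
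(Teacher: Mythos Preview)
Your proposal is correct and is essentially the argument the paper has in mind --- the paper in fact omits the proof entirely, calling it ``immediate'' from Proposition~\ref{introP3}. One minor simplification: since every nonempty open subset of $M$ (and of $T^{-1}M$) is dense in the induced Zariski topology, the equality $\sigma_\tau(M,R)=\kappa_M$ is \emph{directly} equivalent to $M$ being $\kappa_M$-Baire (complements of proper closed sets are exactly the nonempty open sets, and adding one more open set to a family of size $<\kappa_M$ keeps the size $<\kappa_M$), so the detour through first $\kappa_M$-category and the ``main obstacle'' you flag both dissolve. With that equivalence in hand the corollary is a one-liner: $M$ is $\kappa_M$-Baire $\Rightarrow$ $T^{-1}M$ is $\kappa_M$-Baire by Proposition~\ref{introP3} $\Rightarrow$ $\sigma_\tau(T^{-1}M,T^{-1}R)=\kappa_{T^{-1}M}=\kappa_M$, and the sandwich $\sigma_\tau\le\sigma\le\kappa_{T^{-1}M}$ finishes.
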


 We conclude this paper by characterizing when $M$ has a finite Zariski covering number -- see Proposition~\ref{P19}.

\section{Covering problem for vector spaces}\label{S3}

We first briefly review the known results in the well-studied covering problem for vector spaces $V$, and provide an alternate approach to the proofs, via induction on the dimension of $V$. We then provide a topological perspective to the covering problem for vector spaces over infinite fields.

\subsection{Review of known results}\label{S3.1}

We start by revisiting a well-known result for vector spaces. 

\begin{proposition}\label{P1}
	Let $V$ be any vector space over an infinite field $\mathbf{K}$. Then $V$ is not the union of a finite number of proper subspaces.
\end{proposition}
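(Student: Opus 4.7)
The plan is to argue by contradiction using a pigeonhole principle on a one-parameter family of vectors lying on an affine line. I would first reduce to an irredundant cover: suppose for contradiction that $V = W_1 \cup W_2 \cup \cdots \cup W_n$ with each $W_i$ a proper subspace and $n$ chosen minimal. Minimality forces $n \geq 2$ and, since no $W_i$ is contained in $\bigcup_{j \neq i} W_j$, there exists some $w \in W_1$ such that $w \notin W_i$ for every $i \geq 2$. Since $W_1$ is a proper subspace of $V$, I can also fix some $v \in V \setminus W_1$.

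Next, I would exploit the line $v + \mathbf{K} w = \{v + tw : t \in \mathbf{K}\}$. For each $t \in \mathbf{K}$ the point $v + tw$ must lie in $W_{i(t)}$ for some index $i(t) \in \{1, \ldots, n\}$. A key observation is that $i(t) \neq 1$ for every $t$: indeed, if $v + tw \in W_1$ then, since $tw \in W_1$ (as $w \in W_1$), we would obtain $v = (v + tw) - tw \in W_1$, contradicting the choice of $v$. Hence the map $t \mapsto i(t)$ takes values in the finite set $\{2, \ldots, n\}$.

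Because $\mathbf{K}$ is infinite and $\{2,\ldots,n\}$ is finite, the pigeonhole principle yields distinct $t_1, t_2 \in \mathbf{K}$ with $i(t_1) = i(t_2) = i$ for some $i \geq 2$. Then both $v + t_1 w$ and $v + t_2 w$ lie in the subspace $W_i$, so their difference $(t_1 - t_2)\, w$ lies in $W_i$; dividing by the nonzero scalar $t_1 - t_2$ gives $w \in W_i$, contradicting the choice of $w$. This contradiction closes the argument.

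I expect no serious obstacle here; the crux is conceptual rather than computational, namely recognizing that the affine line $v + \mathbf{K} w$ entirely avoids $W_1$, which effectively reduces covering $V$ by $n$ subspaces to covering an infinite line by $n-1$ subspaces. No finite-dimensionality hypothesis on $V$ is needed, since the whole argument takes place inside $\mathrm{span}(v,w)$; in particular this proof avoids the paper's inductive route through an injection of hyperplanes of $\mathbf{K}^2$ into those of $\mathbf{K}^n$, which would instead be the natural path if one wanted the sharper quantitative bound $\sigma(V,\mathbf{K}) = |\mathbf{K}| + 1$ obtained later.
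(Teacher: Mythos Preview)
Your proof is correct. The argument is the classical affine-line pigeonhole trick: pick $w \in W_1 \setminus \bigcup_{j \ge 2} W_j$ and $v \notin W_1$, observe that the infinite set $\{v + tw : t \in \mathbf{K}\}$ avoids $W_1$ entirely, and use pigeonhole among the remaining $n-1$ subspaces to force $w$ into one of them. All the verifications go through, and as you note, no finite-dimensionality assumption is needed.

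This is, however, precisely the route the paper deliberately bypasses. The paper explicitly flags that the standard proof proceeds via induction on the number of subspaces (your minimal-$n$ framing is a repackaging of that induction) and then offers a different argument: first prove Lemma~\ref{L1} for hyperplanes by induction on $\dim V$ (the $k=2$ case is handled directly, and the inductive step uses the correspondence between $k$-dimensional subspaces containing a fixed $(k-1)$-dimensional $\Omega$ and lines in $V/\Omega \cong \mathbf{K}^2$), and then reduce general finite-dimensional $V$ to the hyperplane case by enlarging each $V_i$ to a maximal $\tilde{V}_i$; the infinite-dimensional case is handled by restricting to a finite-dimensional span. Your approach is shorter and more self-contained for this particular statement. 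The paper's approach is more elaborate but is designed with an eye toward what comes next: the dimension-induction and hyperplane-counting structure is exactly what gets reused to prove the quantitative equality $\sigma(V,\mathbf{K}) = |\mathbf{K}|+1$ (Theorem~\ref{T1}), and, more importantly, to handle finite-length semi-simple modules in Proposition~\ref{P12}, where there is no single field to run an affine-line argument over. You already anticipated this trade-off in your final paragraph, and your assessment is accurate.
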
 

The standard proof in the literature (for example, Proposition~$2.58$ of \cite{galois})  proceeds via induction on the number of subspaces in the union. We provide two other proofs of Proposition~\ref{P1}-- one presently using linear algebra, and one in Corollary~\ref{C5} below using the (induced) Zariski topology. For the first proof, note that the statement is trivially true for a $1$-dimensional vector space $V$, so we will prove it for vector spaces of dimension at least $2$. We first prove a special case.

\begin{lemma}\label{L1}
	If $V$ is a vector space of dimension $k<\infty$ over an infinite field $\mathbf{K}$, then $V$ is not the union of a finite number of proper subspaces of dimension $k-1$. 
\end{lemma}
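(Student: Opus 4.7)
The plan is to induct on $k = \dim V \geq 2$. For the base case $k = 2$, the hyperplanes of $V \cong \mathbf{K}^2$ are precisely the lines through the origin, which are parametrized by $\mathbb{P}^1(\mathbf{K})$ and hence form a set of cardinality $|\mathbf{K}|+1$; in particular this set is infinite since $\mathbf{K}$ is. Given any finite list $H_1, \dots, H_n$ of such lines, one picks a line $L$ distinct from each $H_i$, and any nonzero $v \in L$ lies outside every $H_i$, because two distinct lines through the origin meet only at $0$.

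For the inductive step I would assume the lemma for dimension $k-1$ and take hyperplanes $H_1, \dots, H_n$ of $V$ (of dimension $k \geq 3$). The key move is to choose a hyperplane $H$ of $V$ with $H \neq H_i$ for every $i$. This is possible because the set of hyperplanes of $V$ is infinite: after fixing a basis $e_1, \dots, e_k$, the assignment sending a line $L \subset \mathrm{span}(e_1, e_2)$ through the origin to $L \oplus \mathrm{span}(e_3, \dots, e_k)$ is an injection from the infinite set of hyperplanes of $\mathbf{K}^2$ into the set of hyperplanes of $V$. For each $i$, since $H \neq H_i$ and both are hyperplanes, $H + H_i = V$ and so $\dim(H \cap H_i) = 2(k-1) - k = k-2$; in other words, each $H \cap H_i$ is a hyperplane of $H$. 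Applying the induction hypothesis to $H$ (of dimension $k-1$) and the $n$ hyperplanes $H \cap H_1, \dots, H \cap H_n$ of $H$ then produces a vector $v \in H$ with $v \notin H \cap H_i$ for any $i$; since $v \in H$, this gives $v \notin H_i$ for any $i$, as required.

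The main obstacle is precisely the step that produces a ``fresh'' hyperplane $H$ outside the finite list: this is where the hypothesis that $\mathbf{K}$ is infinite enters the argument, and it is exactly the role played by the injection $\mathrm{Hyp}(\mathbf{K}^2) \hookrightarrow \mathrm{Hyp}(V)$ advertised in Section~\ref{S2}. Once one has infinitely many hyperplanes available in $V$, the rest of the inductive step is a straightforward dimension count followed by an appeal to the induction hypothesis, and the base case handles the irreducible content of the statement. I would expect this same skeleton (base case in dimension two, plus injection-of-hyperplanes to propagate to higher dimensions) to be the template for the more general statements in Section~\ref{S3} and for the adaptation to finite-length semisimple modules mentioned in the introduction.
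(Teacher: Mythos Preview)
Your proof is correct and follows essentially the same approach as the paper: induction on dimension with the base case $k=2$, and an inductive step that uses the infinitude of hyperplanes (via an injection from lines in $\mathbf{K}^2$) to pick a fresh hyperplane $H$ and then applies the induction hypothesis to the intersections $H\cap H_i$. The only cosmetic difference is that the paper realizes the injection via the quotient $V/\Omega\cong\mathbf{K}^2$ for a codimension-$2$ subspace $\Omega$, whereas you phrase it as the direct-sum map $L\mapsto L\oplus\operatorname{span}(e_3,\dots,e_k)$; these are the same construction, and your explicit dimension count for $H\cap H_i$ makes overt what the paper leaves implicit.
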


\begin{proof}
	First, we prove the lemma for $k=2$ by contradiction. Assume $V$= $\bigcup_{j=1}^{p}(v_j)$, where $(v_j)$ are distinct 1-dimensional subspaces generated by $v_j$ $\in$ $V$, and $p$ $\in$ $\mathbb{N}$. Let $\lbrace w_1, w_2 \rbrace$ be a basis for $V$, and write $v_i = a_{1i} w_1 + a_{2i} w_2$ for all $i$, where $a_{1i}, a_{2i} \in \mathbf{K}$. As $V = \bigcup_{j=1}^p (v_j)$, for each $(\lambda_1, \lambda_2) \in \mathbf{K}^2$ there exists $\beta$ such that $(\lambda_1, \lambda_2) = \beta (a_{1i}, a_{2i})$ for some $i \in \{ 1, 2, \dots, p \}$. Choose $\lambda \in \mathbf{K} \setminus \{ 0, a_{11}, \dots, a_{1p} \}$, and let $Z := \{ i : a_{1i} \neq 0 \}$. As $Z$ is finite, there exists $\gamma \in \mathbf{K} \setminus \{ 0 \}$ such that $\gamma \neq \lambda a_{1i}^{-1} a_{2i}, \ \forall i \in Z$. Now $(\lambda, \gamma) \in \mathbf{K}^2$ is not of the form $\beta (a_{1i}, a_{2i})$ for any $\beta \in \mathbf{K}$ and $1 \leq i \leq p$. Thus $w = \lambda w_1+\gamma w_2$ $\notin$ $\bigcup_{j=1}^{p}(v_j)$. This contradicts  our assumption, proving the lemma for $k=2$.
	
	Now let $k\geq 2$ and assume that the result holds for all vector spaces of dimension $k$ over an infinite field $\mathbf{K}$. Let $V$ be a vector space over $\mathbf{K}$ with $\dim V=k+1$, and fix a subspace $\Omega$ of dimension $k-1$. Now the $k$-dimensional subspaces of $V$ containing $\Omega$ are in $1-1$ correspondence with the $1$-dimensional subspaces of the $2$-dimensional quotient $V/\Omega\cong \mathbf{K}^2$. By the $k=2$ case, there are infinitely many distinct 1-dimensional subspaces in $V/\Omega$. For distinct $1$-dimensional subspaces of $V/\Omega$, we get distinct $k$-dimensional subspaces of $V$. Assume that $V$ is a finite union of $k$-dimensional subspaces $\lbrace V_i\rbrace_{i=1}^{n}$. Then choose a $k$-dimensional subspace $W$ of $V$ distinct from all $V_i$. Then $W=\bigcup_{i=1}^{n}W\cap V_i$, which contradicts the induction hypothesis. 
\end{proof}

This completes the proof of Lemma~\ref{L1}. We now use this lemma to show:

\begin{proof}[Proof of Proposition~\ref{P1}]
	Let $V$ be a finite dimensional vector space of dimension $k>1$ over an infinite field $\mathbf{K}$. Consider any finite union $\bigcup_{i=1}^{n}V_i$ of proper subspaces of $V$. Then each proper subspace $V_i$ is contained in a maximal subspace $\tilde{V}_i$ of $V$, which are also $k-1$ dimensional subspaces of $V$. Thus, $\bigcup_{i=1}^{n}V_i\subseteq \bigcup_{i=1}^{n}\tilde{V}_i\subsetneq V$, where the last strict containment follows from Lemma~\ref{L1} above. This shows the result for finite dimensional vector spaces.
	
	Now let $V$ be an arbitrary infinite dimensional vector space over an infinite field $\mathbf{K}$. Suppose $V$ could be expressed as a finite union of proper subspaces $W_i$ ($i$ = $1, 2, 3, \dots, n$). Choose vectors $w_i\in V\setminus W_i$, for all $i$ and define $X$ to be the (finite-dimensional) span of $\lbrace w_1, w_2, \dots, w_n \rbrace$. Then the union of $X\cap W_i$  over all $i$ $\in$ $\lbrace 1, 2, \dots, n\rbrace$ covers $X$. Also $X\cap W_i\subsetneq X$, since $w_i\in X\setminus W_i$. However, this contradicts Lemma~\ref{L1}.
\end{proof}

In the proof of Lemma~\ref{L1}, we first dealt with the case $k=2$ and then used this statement to conclude the presence of enough $k-1$ dimensional subspaces in the case of general finite $k$, to use the induction hypothesis. The reduction process via quotienting did not make use of the hypothesis that $\mathbf{K}$ is an infinite field. Hence, one can apply the same reduction to the statement when $\mathbf{K}$ is a finite field or an infinite field of arbitrary cardinality, and explicitly evaluate the minimum (cardinal) number of proper subspaces required to cover a given finite-dimensional vector space.

\begin{definition}\label{D1}
    For any vector space $V$ over any base field $\mathbf{K}$, the minimum (cardinal) number of proper subspaces required to cover $V$ is called the \textit{covering number} of $V$ and is denoted by $\sigma (V, \mathbf{K})$.
\end{definition}

The evaluation of $\sigma(V, \mathbf{K})$ was discussed in \cite{KHARE20091681}. Hence, we skip the proofs and only sketch our arguments in this subsection. Using a simple counting argument, we first have:

\begin{lemma}\label{L2}
	Let $V$ be a $2$-dimensional vector space over $\mathbf{F}_{p^n}$. Then $V$ can be expressed as a union of exactly $p^n+1$ proper subspaces. In particular $\sigma(V, \mathbf{F}_{p^n})=|\mathbf{F}_{p^n}|+1$, for $2$-dimensional $V$.
\end{lemma}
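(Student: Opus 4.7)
The plan is a direct counting argument, exploiting the fact that in a $2$-dimensional space the only proper non-zero subspaces are lines through the origin.

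First I would set $q := p^n = |\mathbf{F}_{p^n}|$ and note that $|V| = q^2$. Since $\dim V = 2$, every proper subspace of $V$ is either the zero subspace or a $1$-dimensional subspace (``line''), each line having exactly $q$ elements ($q-1$ nonzero vectors plus the origin). Any two distinct lines meet only at $0$, since their intersection is a proper subspace of each. I would then count the lines: each of the $q^2 - 1$ nonzero vectors of $V$ lies in a unique line, and each line contains $q-1$ nonzero vectors, so the number of lines in $V$ equals $(q^2-1)/(q-1) = q+1$.

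For the upper bound $\sigma(V, \mathbf{F}_{p^n}) \leq q+1$, I would take the union of all $q+1$ lines. By the previous paragraph, every nonzero vector of $V$ lies in exactly one of them, and $0$ lies in all of them, so this union is all of $V$. This exhibits $V$ as a union of exactly $q+1$ proper subspaces.

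For the matching lower bound, suppose $V = \bigcup_{i=1}^{k} W_i$ with each $W_i$ a proper subspace. Dropping any $W_i = \{0\}$ we may assume each $W_i$ is a line, and we may further assume the $W_i$ are pairwise distinct. Then, since distinct lines meet only at $0$, the nonzero vectors of $\bigcup_{i=1}^k W_i$ number exactly $k(q-1)$. Covering all of $V$ forces $k(q-1) \geq q^2 - 1 = (q-1)(q+1)$, hence $k \geq q+1$. Combined with the previous step this gives $\sigma(V, \mathbf{F}_{p^n}) = q+1 = |\mathbf{F}_{p^n}|+1$, as claimed.

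There is no real obstacle here; the only thing to take care of is the bookkeeping that distinct lines intersect only at $0$, which is what makes the counting $k(q-1) = q^2 - 1$ tight and forces equality $k = q+1$ in the lower bound.
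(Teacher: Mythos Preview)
Your proof is correct and is precisely the ``simple counting argument'' the paper alludes to but does not spell out in detail. There is nothing to add: counting nonzero vectors in pairwise-disjoint-except-at-zero lines gives both the exact number of lines $(q^2-1)/(q-1)=q+1$ and the tight lower bound on the size of any covering.
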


  Lemma~\ref{L2} helps prove the following result, along the lines of the proof of Proposition~\ref{P1}, by first proving the statement for finite-dimensional vector spaces $V$ using Lemma~\ref{L2} and induction on dimension. The infinite-dimensional case then follows, as in the proof of Proposition~\ref{P1}.

\begin{proposition}\label{P3}
	Let $V$ be a vector space over a finite field $\mathbf{F}_{p^n}$ of order $p^n$, with dimension greater than $1$ (maybe infinite). Then $V$ is not the union of any collection of at most $p^n$ proper subspaces. Thus, $\sigma(V, \mathbf{F}_{p^n})=|\mathbf{F}_{p^n}|+1$, for any vector space $V$. 
\end{proposition}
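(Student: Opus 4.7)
\emph{Strategy.} The plan is to adapt the quotient-and-reduce argument from the proofs of Lemma~\ref{L1} and Proposition~\ref{P1}, carefully tracking the number of subspaces so that the sharp bound $p^n+1$ from Lemma~\ref{L2} propagates through the induction. As a preliminary reduction, if $V=\bigcup_{i=1}^m W_i$ is a cover by $m\leq p^n$ proper subspaces, then replacing each $W_i$ by a maximal subspace $\tilde W_i\supseteq W_i$ yields a cover by $m$ hyperplanes, so it suffices to rule out covers of $V$ by at most $p^n$ hyperplanes.

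\emph{Finite-dimensional case.} For $\dim V<\infty$ I would induct on $k:=\dim V\geq 2$. The base case $k=2$ is precisely Lemma~\ref{L2}. For the inductive step, assume the result in dimension $k$ and let $V$ have dimension $k+1$, covered by hyperplanes $V_1,\ldots,V_m$ with $m\leq p^n$. Fix a codimension-$2$ subspace $\Omega\subset V$; the hyperplanes of $V$ containing $\Omega$ biject with the $1$-dimensional subspaces of $V/\Omega\cong \mathbf{F}_{p^n}^2$, of which there are exactly $p^n+1$. Since $m\leq p^n<p^n+1$, at least one such hyperplane $W$ is distinct from every $V_i$. Then $W=\bigcup_{i=1}^m (W\cap V_i)$, and each $W\cap V_i$ is a proper subspace of $W$ (distinct hyperplanes of $V$ intersect in codimension one). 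This exhibits the $k$-dimensional $W$ as a union of $\leq p^n$ proper subspaces, contradicting the induction hypothesis.

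\emph{Infinite-dimensional case and matching upper bound.} For arbitrary $V$ with $\dim V\geq 2$ I would reduce to the finite-dimensional case exactly as in Proposition~\ref{P1}: given a cover $V=\bigcup_{i=1}^m W_i$ with $m\leq p^n$, choose $w_i\in V\setminus W_i$ and set $X:=\operatorname{span}(w_1,\ldots,w_m)$. A quick check gives $\dim X\geq 2$: if $X$ were one-dimensional, each $X\cap W_i$ would be $0$ or $X$, and $w_i\notin W_i$ would force $X\cap W_i=0$ for every $i$, contradicting $X\subseteq \bigcup_i W_i$. Hence $X$ is finite-dimensional of dimension $\geq 2$ and covered by the $m\leq p^n$ proper subspaces $X\cap W_i$, contradicting the finite-dimensional case proved above. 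To upgrade the lower bound to the equality $\sigma(V,\mathbf{F}_{p^n})=p^n+1$, I would exhibit a matching cover: pick any codimension-$2$ subspace $U\subseteq V$ (available since $\dim V\geq 2$) and pull back the $p^n+1$ lines covering $V/U\cong\mathbf{F}_{p^n}^2$ (Lemma~\ref{L2}) to $p^n+1$ hyperplanes of $V$ whose union equals $V$.

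\emph{Main obstacle.} The only substantive step is the inductive reduction above: everything hinges on a codimension-$2$ subspace being contained in exactly $p^n+1$ hyperplanes, so that the strict inequality $m\leq p^n<p^n+1$ produces the ``escape'' hyperplane $W$ not in the given cover. All remaining steps are dimension-counting or direct transcriptions of the arguments used for Lemma~\ref{L1} and Proposition~\ref{P1}.
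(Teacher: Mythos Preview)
Your proposal is correct and follows essentially the same approach the paper indicates: use Lemma~\ref{L2} as the base case, then induct on dimension via the quotient-by-codimension-two argument of Lemma~\ref{L1}, and handle the infinite-dimensional case exactly as in Proposition~\ref{P1}. The extra details you supply (the $\dim X\geq 2$ check and the explicit upper-bound construction) are fine additions that the paper leaves implicit.
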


These methods work equally well for proving the analogous results for the case of vector spaces over infinite fields of arbitrary cardinalities. First, we note that the proof of Lemma~\ref{L1} can be replicated to prove the following lemma.

\begin{lemma}\label{P5}
	If $V$ is a $k$-dimensional ($k>1$ and finite) vector space over an infinite field $\mathbf{K}$ of cardinality $|K| = \kappa$, then $V$ is not the union of $\mathfrak{I}$-many proper subspaces of dimension $k-1$, if $\mathfrak{I} < \kappa$ ($\mathfrak{I}$ is a cardinal number). 
\end{lemma}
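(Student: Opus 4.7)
The plan is to replay the proof of Lemma~\ref{L1} essentially verbatim, substituting the phrase ``$\mathfrak{I}$-many'' for ``finitely many'' throughout and invoking cardinal arithmetic wherever the original proof used the finiteness of an index set. Induction is again carried out on the dimension $k$.

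For the base case $k=2$, the counting argument of Lemma~\ref{L1} works with only the sizes of various ``forbidden'' sets needing comment. Given a putative cover $V = \bigcup_{j \in J} (v_j)$ by $1$-dimensional subspaces with $|J| = \mathfrak{I} < \kappa$, fix a basis $\{w_1, w_2\}$, write $v_j = a_{1j} w_1 + a_{2j} w_2$, and seek $(\lambda, \gamma) \in \mathbf{K}^2$ not lying on any of the lines $(v_j)$. First pick $\lambda \in \mathbf{K} \setminus \{0\}$ (possible since $\mathbf{K}$ is infinite); this already rules out the lines where $a_{1j} = 0$. For the remaining lines, indexed by some $Z \subseteq J$, the requirement $(\lambda, \gamma) \notin (v_j)$ reduces to $\gamma \neq (\lambda/a_{1j}) a_{2j}$, a single forbidden value per $j \in Z$. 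Since $|Z| \leq \mathfrak{I} < \kappa = |\mathbf{K}|$, the complement of the forbidden set inside $\mathbf{K}$ is non-empty, and one picks $\gamma$ from it.

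The inductive step is then copied from Lemma~\ref{L1} without essential change. Assuming the statement for dimension $k$, let $V$ have dimension $k+1$, fix $\Omega \subset V$ of dimension $k-1$, and recall that the $k$-dimensional subspaces of $V$ containing $\Omega$ are in bijection with the $1$-dimensional subspaces of $V/\Omega \cong \mathbf{K}^2$. Applying the $k=2$ case just proved to $V/\Omega$ shows there are more than $\mathfrak{I}$ many such subspaces, in fact $\kappa$-many. Now given a hypothetical cover $V = \bigcup_{i \in I} V_i$ by hyperplanes with $|I| = \mathfrak{I}$, one can therefore select a $k$-dimensional subspace $W$ containing $\Omega$ and distinct from every $V_i$. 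Then $W = \bigcup_{i \in I} (W \cap V_i)$, each $W \cap V_i$ is a proper subspace of $W$ (since $\dim V_i = \dim W = k$ and $W \neq V_i$), and enlarging each to a hyperplane of $W$ produces a cover of $W$ by $\mathfrak{I}$-many hyperplanes of dimension $k-1$, contradicting the induction hypothesis applied to $W$.

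I expect no real obstacle; the whole argument is a cardinal-arithmetic adaptation of Lemma~\ref{L1}. The only sanity check required is that for any infinite cardinal $\kappa$ and any $\mathfrak{I} < \kappa$, a subset of $\mathbf{K}$ of cardinality at most $\mathfrak{I}$ cannot exhaust $\mathbf{K}$, which is immediate and is precisely the place where the hypothesis $\mathfrak{I} < \kappa$ (rather than merely $\mathfrak{I}$ finite) enters.
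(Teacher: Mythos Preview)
Your proposal is correct and takes essentially the same approach as the paper, which does not spell out a separate proof for this lemma but simply remarks that the proof of Lemma~\ref{L1} can be replicated; your write-up carries out exactly that replication with the needed cardinal-arithmetic substitutions. Your base case is in fact marginally cleaner than the paper's original $k=2$ argument (you observe that excluding only $\lambda=0$ suffices), and you make explicit the enlarging of each $W\cap V_i$ to a hyperplane of $W$, which the paper leaves implicit.
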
 

Lemma~\ref{P5} implies the next result, which is a generalization of the finite-dimensional case in Proposition~\ref{P1}. Since the proof is almost identical to that of Proposition~\ref{P1}, it is omitted.

\begin{proposition}\label{P6}
	Let $V$ be a finite-dimensional vector space over an infinite field $\mathbf{K}$ of cardinality $|K|=\kappa$. Then $V$ is not the union of $\mathfrak{I}$-many proper subspaces of $V$ if $\mathfrak{I}<\kappa$.
\end{proposition}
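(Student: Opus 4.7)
The plan is to mimic verbatim the structure of the proof of Proposition~\ref{P1}, replacing the appeal to Lemma~\ref{L1} by an appeal to the generalized Lemma~\ref{P5}. The statement is trivial when $\dim V \leq 1$, so I assume $\dim V = k \geq 2$. Given any collection $\{V_i\}_{i \in I}$ of proper subspaces of $V$ indexed by a set $I$ with $|I| = \mathfrak{I} < \kappa$, the first step is to enlarge each $V_i$ to a maximal proper subspace $\tilde{V}_i \supseteq V_i$, which has dimension exactly $k - 1$.

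Next, I observe the containment
\[
\bigcup_{i \in I} V_i \;\subseteq\; \bigcup_{i \in I} \tilde{V}_i,
\]
and note that the right-hand side is a union of at most $\mathfrak{I}$-many $(k-1)$-dimensional proper subspaces of $V$ (possibly fewer, if some of the $\tilde{V}_i$ coincide, but certainly no more). Since $\mathfrak{I} < \kappa$, Lemma~\ref{P5} applies directly to the collection $\{\tilde{V}_i\}_{i \in I}$ and yields
\[
\bigcup_{i \in I} \tilde{V}_i \;\subsetneq\; V.
\]
Combining the two displayed relations gives $\bigcup_{i \in I} V_i \subsetneq V$, which is exactly the desired conclusion.

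There is essentially no obstacle in this argument beyond checking that the reduction from arbitrary proper subspaces to hyperplanes (i.e.\ $(k-1)$-dimensional subspaces) does not increase the cardinality of the covering family past $\mathfrak{I}$, which is immediate since each $V_i$ is replaced by a single $\tilde{V}_i$. The only subtle point one might flag is that the maximal proper subspace $\tilde{V}_i$ containing $V_i$ exists: this is automatic because $V$ is finite-dimensional, so one can simply complete any basis of $V_i$ to a basis of $V$ and drop one basis vector lying outside $V_i$. Thus the entire burden of the proposition is carried by Lemma~\ref{P5}, exactly as in the original finite-cardinality case.
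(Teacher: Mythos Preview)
Your proposal is correct and follows exactly the approach the paper intends: the paper explicitly omits the proof of Proposition~\ref{P6}, saying it is ``almost identical to that of Proposition~\ref{P1},'' and your argument is precisely that proof with Lemma~\ref{P5} substituted for Lemma~\ref{L1}.
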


 Note that in the above case of finite-dimensional vector spaces $V$ over a base field of infinite cardinality $\kappa$, one can express $V$ as a union of $\kappa=\kappa+1$ many proper subspaces, when the dimension of $V$ is at least $2$. Thus, the results so far can be summarized as:

\begin{theorem}\label{T1}
	For any finite-dimensional vector space $V$ with dimension at least $2$, over any base field $\mathbf{K}$, we have $\sigma (V, \mathbf{K})=|\mathbf{K}|+1$. For finite base fields $\mathbf{K}$, we have $\sigma(V, \mathbf{K})=|\mathbf{K}|+1$, for any (even infinite-dimensional) vector space $V$ over $\mathbf{K}$, of dimension at least $2$.
\end{theorem}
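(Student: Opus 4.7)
My plan is to assemble Theorem~\ref{T1} from already-established pieces by splitting into the two regimes on $|\mathbf{K}|$. The second assertion (``$\mathbf{K}$ finite, $V$ arbitrary of dimension at least~$2$'') is exactly Proposition~\ref{P3}, and the finite-field subcase of the first assertion is also contained in Proposition~\ref{P3}. Hence the only substantive case remaining is $\mathbf{K}$ infinite of cardinality $\kappa := |\mathbf{K}|$, with $V$ of finite dimension $k \geq 2$; for this one needs to prove $\sigma(V, \mathbf{K}) = \kappa+1 = \kappa$.

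For the lower bound, Proposition~\ref{P6} already states that $V$ is not the union of $\mathfrak{I}$-many proper subspaces whenever the cardinal $\mathfrak{I}$ satisfies $\mathfrak{I} < \kappa$; in particular $\sigma(V, \mathbf{K}) \geq \kappa = \kappa+1$. For the matching upper bound I will exhibit an explicit cover of $V$ by $|\mathbf{K}|+1$ proper subspaces. Fix any codimension-$2$ subspace $W \subseteq V$ (take $W = 0$ if $k=2$) and consider the quotient $\pi : V \to V/W$, which is $2$-dimensional. Hyperplanes of $V$ containing $W$ correspond bijectively (via $H \mapsto \pi(H)$) to $1$-dimensional subspaces of $V/W$, of which there are exactly $|\mathbf{K}|+1$ (indexed by the points of the projective line over $\mathbf{K}$). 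Since every vector of $V/W$ lies on at least one such line, the preimages under $\pi$ of these lines, i.e.\ the chosen hyperplanes, together cover $V$. This realizes $V$ as a union of $|\mathbf{K}|+1$ proper subspaces, yielding $\sigma(V, \mathbf{K}) \leq |\mathbf{K}|+1$; combining with the lower bound closes the argument.

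I do not anticipate any real obstacle here: Lemma~\ref{P5} and Proposition~\ref{P6} have already done the substantive work on the lower bound, and the only new ingredient needed is the standard hyperplane construction through a codimension-$2$ subspace above, together with the elementary cardinal-arithmetic identity $\kappa + 1 = \kappa$ used to reconcile the bound from Proposition~\ref{P6} with the target value $|\mathbf{K}|+1$. The one minor pitfall is bookkeeping in the borderline $k=2$ case, which I handle uniformly by allowing $W=0$ so that the construction reduces to noting that the $|\mathbf{K}|+1$ one-dimensional subspaces of $V$ itself exhaust $V$.
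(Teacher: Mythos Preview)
Your proposal is correct and follows essentially the same route as the paper: the paper also treats Theorem~\ref{T1} as a summary of Proposition~\ref{P3} (finite field case) and Proposition~\ref{P6} (infinite field lower bound), with the upper bound for infinite $\mathbf{K}$ handled by the one-line remark just preceding the theorem that $V$ can be written as a union of $\kappa = \kappa+1$ proper subspaces. Your explicit hyperplane-through-a-codimension-$2$-subspace construction simply fleshes out that remark; otherwise the two arguments coincide.
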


For completeness, we remark that for an infinite-dimensional vector space $V$ over an infinite base field $\mathbf{K}$, one always has $\sigma(V, \mathbf{K})=\aleph_0=|\mathbb{N}|$. Hence, Theorem~\ref{T1} cannot be extended to such cases in general, unless the base field is countable.

\subsection{A topological perspective}\label{S3.4}

This section revisits Proposition~\ref{P1} from a topological point of view. Let $V$ be a $n$-dimensional vector space over an infinite field $\mathbf{K}$, for some natural number $n$. Fix an ordered basis $\mathcal{B} =  (e_1, \dots, e_n)$ of $V$. This induces an isomorphism $\tau : V \rightarrow \mathbf{K}^n$, mapping $\sum_{i=1}^{n}a_i\cdot e_i \mapsto(a_1, a_2, \dots, a_n)$. 

Consider the Zariski topology on the affine space $\mathbf{K}^n$, that is, the closed subsets of $\mathbf{K}^n$ are precisely the algebraic sets $V(\mathcal{F})$, $\mathcal{F}\subset \mathbf{K}[X_1, X_2, \dots, X_n]$ defined as follows:
$$V(\mathcal{F}) := \lbrace (a_1, \dots, a_n)\in \mathbf{K}^n | f(a_1, \dots, a_n)= 0 \ \forall f(X_1, \dots, X_n) \in \mathcal{F}\rbrace$$

We make $V$ into a topological space by equipping it with the coarsest topology such that the isomorphism $\tau: V\rightarrow \mathbf{K}^n$ becomes a homeomorphism: that is, define the closed sets in $V$ as precisely the sets $\tau^{-1}(V(\mathcal{F})) \subset V$. We shall call this the \textit{induced Zariski topology} on $V$.

Next, we claim that \textit{the induced Zariski topology on the vector space $V$ is canonical}, i.e., it is independent of the choice of basis of $V$ made to get an isomorphism with $\mathbf{K}^n$:

To see why, let $\mathcal{B}_1=( v_1, \dots, v_n)$, $\mathcal{B}_2=( w_1, \dots, w_n)$ be two ordered bases of $V$. Let $C = (c_{ij})_{i,j=1}^n$ and $D = (d_{ij})_{i,j=1}^n$ denote the change-of-basis matrices from $\mathcal{B}_1$ to $\mathcal{B}_2$ and vice versa -- that is, $w_j = \sum_{i=1}^n c_{ij} v_i$ and $v_j = \sum_{i=1}^n d_{ij} w_i$. Now define the map $\Psi_D : \mathbf{K}[X_1, \dots, X_n] \to \mathbf{K}[X_1, \dots, X_n]$ via: $\Psi_D(f)(X_1, \dots, X_n) := f(Y_1, \dots, Y_n)$, where the transformed variables $Y_i = \sum_{j=1}^n d_{ij} X_j$. Similarly define $\Psi_C$; it is then clear that $\Psi_C, \Psi_D$ are mutually inverse algebra isomorphisms of $\mathbf{K}[X_1, \dots, X_n]$. In particular, $\tau_2^{-1}(V(\mathcal{F})) = \tau_1^{-1}(V(\Psi_D(\mathcal{F}))$ for every subset $\mathcal{F} \subset \mathbf{K}[X_1, \dots, X_n]$. Thus, the induced Zariski topology is independent of the choice of basis of $V$. \qed \medskip

Note that for any subset $\mathcal{F}\subset \mathbf{K}^n$, $V(\mathcal{F}) = V((\mathcal{F}))$, where $(\mathcal{F})$ is the ideal generated by $\mathcal{F}$ in $\mathbf{K}[X_1, X_2, \dots, X_n]$. So from here on, we only consider closed sets of the form $V(\mathcal{I})$ for ideals $\mathcal{I}\subset\mathbf{K}[X_1, X_2, \dots, X_n]$. It is routine to verify that the Zariski topology is indeed a well-defined topology on the affine space $\mathbf{K}^n$, so the topology induced by the isomorphism $\tau$ on $V$ from the Zariski topology on $\mathbf{K}^n$ is also well-defined.

\begin{definition}\label{D2}
	A topological space $X$ is said to be \textit{irreducible} if $X\neq \emptyset$ and if every pair of open sets in $X$ intersect. An equivalent definition is that every non-empty open subset in $X$ is dense in $X$. Another equivalent definition is that $X$ is not the union of any two proper closed subsets of $X$.
\end{definition}

\begin{remark}\label{R6}
	 A closed irreducible subspace $A$ of a topological space $X$ is a closed subset of $X$ which is irreducible as a topological space under the subspace topology. Such a subset $A$ is not the union of any finite number of proper closed subspaces of $A$. This is largely reminiscent of the results obtained in the case of vector spaces, previously. We will show, this is not a coincidence.
\end{remark}

Now we see a sketch of the fact that if $\mathbf{K}$ is an infinite field, then any finite-dimensional $\mathbf{K}$-vector space $V$ is an irreducible topological space under the induced Zariski Topology. This is in fact stronger than Proposition~\ref{P1}, which will follow as a special case of this, as it is easy to see that vector subspaces of $V$ are closed subsets under the induced Zariski topology.

To each closed subset $V(\mathcal{F}) \subset \mathbf{K}^n$,  associate its \textit{vanishing ideal}, given by:
\[
\mathcal{I}(V(\mathcal{F})):=\lbrace f\in\mathbf{K}[X_1, \dots, X_n]: f \text{ vanishes on } V(\mathcal{F})\rbrace
\]
The following well-known proposition characterizes the vanishing ideals $\mathcal{I}(V(\mathcal{F}))$ of $\mathbf{K}[X_1, \dots, X_n]$, for which the closed subset $V(\mathcal{F})$ is a closed irreducible subspace of the Zariski topology on $\mathbf{K}^n$:

\begin{proposition}[Proposition $1$, Section $1.5$ in \cite{fulton}]\label{P7}
	Let $\mathbf{K}$ be a field, $n\in\mathbb{N}$, and let $V(\mathcal{F})\subset \mathbf{K}^n$ be a Zariski closed subset. Then the following are equivalent:
	\begin{enumerate}
		\item $V(\mathcal{F})$ is a closed irreducible subset.
		\item The vanishing ideal $\mathcal{I}(V(\mathcal{F}))$ is a prime ideal of $\mathbf{K}[X_1, X_2, \dots, X_n]$.
	\end{enumerate}
\end{proposition}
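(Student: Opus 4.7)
The plan is to exploit the standard order-reversing Galois-type correspondence between closed subsets of $\mathbf{K}^n$ and their vanishing ideals. A key preliminary observation is that for any Zariski-closed $Y=V(\mathcal{F})\subseteq\mathbf{K}^n$, one has $V(\mathcal{I}(Y))=Y$: the inclusion $Y\subseteq V(\mathcal{I}(Y))$ is tautological, while $\mathcal{F}\subseteq\mathcal{I}(V(\mathcal{F}))$ gives $V(\mathcal{I}(Y))\subseteq V(\mathcal{F})=Y$. Consequently, for closed subsets $Y_1\subseteq Y_2\subseteq\mathbf{K}^n$, the containment is strict if and only if $\mathcal{I}(Y_1)\supsetneq\mathcal{I}(Y_2)$. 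I will use this fact repeatedly.

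For the implication $(1)\Rightarrow(2)$, I would argue by contrapositive. Suppose $I:=\mathcal{I}(V(\mathcal{F}))$ is not prime, so there exist $f,g\in\mathbf{K}[X_1,\dots,X_n]$ with $fg\in I$ but $f,g\notin I$. Define the closed subsets $X_1:=V(\mathcal{F})\cap V(f)$ and $X_2:=V(\mathcal{F})\cap V(g)$ of $V(\mathcal{F})$. Since $f\notin I$, there is some point of $V(\mathcal{F})$ at which $f$ does not vanish, so $X_1\subsetneq V(\mathcal{F})$; symmetrically $X_2\subsetneq V(\mathcal{F})$. On the other hand, $fg\in I$ means that every point of $V(\mathcal{F})$ is a zero of $fg$, hence of $f$ or $g$, so $V(\mathcal{F})=X_1\cup X_2$. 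This exhibits $V(\mathcal{F})$ as the union of two proper closed subsets, contradicting irreducibility.

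For the reverse direction $(2)\Rightarrow(1)$, suppose $I$ is prime and that $V(\mathcal{F})=X_1\cup X_2$ decomposes into proper closed subsets. Setting $I_j:=\mathcal{I}(X_j)$, the preliminary observation yields $I_j\supsetneq I$, so I can pick $f\in I_1\setminus I$ and $g\in I_2\setminus I$. Then the product $fg$ vanishes on $X_1\cup X_2=V(\mathcal{F})$, so $fg\in I$, whereas $f,g\notin I$---contradicting primality. Hence no such decomposition exists, and $V(\mathcal{F})$ is irreducible.

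There is essentially no serious obstacle here: the argument is almost entirely formal once one has the identity $V\circ\mathcal{I}=\mathrm{id}$ on closed sets, and crucially this does not invoke the Nullstellensatz (which would anyway fail for a general, non-algebraically-closed $\mathbf{K}$). The only mild care needed is to verify that strict containment of closed sets is mirrored by strict reverse containment of vanishing ideals, which follows directly from $V(\mathcal{I}(Y))=Y$. With that in hand, both directions reduce to one-line applications of the definition of a prime ideal.
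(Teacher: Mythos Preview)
The paper does not supply its own proof of this proposition; it is quoted verbatim as Proposition~1, Section~1.5 of Fulton's \emph{Algebraic Curves} and used as a black box. Your argument is correct and is exactly the standard proof one finds in Fulton (or any introductory algebraic geometry text), so there is nothing to compare against. One tiny omission: in the direction $(2)\Rightarrow(1)$ you should also observe that $V(\mathcal{F})\neq\emptyset$, since the paper's Definition~\ref{D2} of irreducibility explicitly requires this; but that is immediate, because $\mathcal{I}(\emptyset)$ is the whole polynomial ring and hence not prime.
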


Now consider the subset $\lbrace0\rbrace=\mathcal{F}\subset \mathbf{K}[X_1, X_2, \dots, X_n]$. Note that $V(\mathcal{F}) = \mathbf{K}^n$. Thus, $\mathbf{K}^n$ is a closed set under the Zariski topology. The following lemma is standard, see e.g. \cite[Exercise $1.4$]{fulton}. 

\begin{lemma}\label{L3}
	Let $\mathbf{K}$ be an infinite field. Let $n\in\mathbb{N}$.  For the subset $\lbrace0\rbrace=\mathcal{F}\subset \mathbf{K}[X_1, X_2, \dots, X_n]$, the vanishing ideal $\mathcal{I}(V(\mathcal{F}))$ is the prime ideal $(0)$. Thus, the affine space $\mathbf{K}^n$ with the Zariski topology is a closed irreducible topological space.
\end{lemma}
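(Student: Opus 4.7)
The plan is to prove the lemma directly by induction on $n$, verifying that the only polynomial in $\mathbf{K}[X_1, \dots, X_n]$ which vanishes identically on $\mathbf{K}^n$ is the zero polynomial; primality is then automatic, and irreducibility follows from Proposition~\ref{P7}.

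First, observe that $V(\{0\}) = \mathbf{K}^n$ trivially, so $\mathcal{I}(V(\{0\}))$ consists precisely of those $f \in \mathbf{K}[X_1, \dots, X_n]$ vanishing on every point of $\mathbf{K}^n$. It is clear that $(0) \subseteq \mathcal{I}(V(\{0\}))$, so the substance of the claim is the reverse inclusion. For the base case $n=1$, any nonzero $f \in \mathbf{K}[X_1]$ has at most $\deg f$ roots in the field $\mathbf{K}$, but $\mathbf{K}$ is infinite, so such an $f$ cannot vanish on all of $\mathbf{K}$; hence $\mathcal{I}(V(\{0\})) = (0)$ when $n=1$.

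For the inductive step, assume the claim for $n-1$ variables. Given $f \in \mathbf{K}[X_1, \dots, X_n]$ vanishing on $\mathbf{K}^n$, write
\[
f(X_1, \dots, X_n) = \sum_{i=0}^{d} g_i(X_1, \dots, X_{n-1}) \, X_n^i,
\]
with $g_i \in \mathbf{K}[X_1, \dots, X_{n-1}]$. For any fixed $(a_1, \dots, a_{n-1}) \in \mathbf{K}^{n-1}$, the one-variable polynomial $f(a_1, \dots, a_{n-1}, X_n) \in \mathbf{K}[X_n]$ vanishes on all of $\mathbf{K}$; by the base case, each coefficient $g_i(a_1, \dots, a_{n-1})$ must be $0$. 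Since this holds for every $(a_1, \dots, a_{n-1}) \in \mathbf{K}^{n-1}$, each $g_i$ vanishes on $\mathbf{K}^{n-1}$, so by the inductive hypothesis $g_i = 0$ for all $i$, and hence $f = 0$. This completes the induction and establishes $\mathcal{I}(V(\{0\})) = (0)$.

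Finally, since $\mathbf{K}[X_1, \dots, X_n]$ is an integral domain, $(0)$ is a prime ideal, so by Proposition~\ref{P7} the closed subset $\mathbf{K}^n = V(\{0\})$ is irreducible in the Zariski topology. The only expected subtlety is the use of infiniteness of $\mathbf{K}$ in the base case (a nonzero polynomial like $X^q - X$ would otherwise vanish identically on a finite $\mathbf{K} = \mathbf{F}_q$); beyond this, the argument is a routine induction requiring no further machinery.
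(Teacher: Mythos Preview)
Your proof is correct. The paper does not actually supply its own proof of this lemma but simply cites it as standard, referring to \cite[Exercise~1.4]{fulton}; your induction on $n$ is precisely the intended argument for that exercise, so there is nothing to compare.
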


Since a finite-dimensional $\mathbf{K}$-vector space $V$ under the induced Zariski topology is homeomorphic to an affine space $\mathbf{K}^n$ under the Zariski topology, Lemma~\ref{L3} implies that,

\begin{corollary}\label{C5}
	A finite-dimensional vector space $V$ over an infinite field $\mathbf{K}$ is an irreducible topological space under the induced Zariski topology. In particular, Proposition~\ref{P1} holds.
\end{corollary}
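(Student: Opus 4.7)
The plan is to deduce Corollary~\ref{C5} directly from Lemma~\ref{L3} via the homeomorphism established in Section~\ref{S3.4}. Fixing any ordered basis $\mathcal{B}=(e_1,\ldots,e_n)$ of $V$ yields, by construction, a homeomorphism $\tau\colon V\to\mathbf{K}^n$ between $V$ (with its induced Zariski topology) and $\mathbf{K}^n$ (with its Zariski topology). Since irreducibility is a topological invariant and Lemma~\ref{L3} asserts that $\mathbf{K}^n$ is irreducible whenever $\mathbf{K}$ is infinite, it follows immediately that $V$ is irreducible under its induced Zariski topology.

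For the ``in particular'' clause, I would first observe that every proper linear subspace $W\subsetneq V$ is a proper closed subset of $V$. To see this, extend a basis of $W$ to a basis $(e_1,\ldots,e_n)$ of $V$ with $W=\mathrm{span}(e_1,\ldots,e_k)$ for some $k<n$; then under the corresponding isomorphism $\tau$, the image $\tau(W)$ coincides with the vanishing set $V(\{X_{k+1},\ldots,X_n\})$, which is plainly a Zariski closed subset of $\mathbf{K}^n$. By the basis-independence of the induced Zariski topology (established in Section~\ref{S3.4}), $W$ is closed in $V$ regardless of which basis is used to transport the topology. Consequently, any finite union $\bigcup_{i=1}^{p}W_i$ of proper subspaces of $V$ is a finite union of proper closed subsets, and by Remark~\ref{R6} (equivalently, by the third formulation of irreducibility in Definition~\ref{D2}, extended to finitely many closed sets by an elementary induction), such a union cannot equal $V$. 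This recovers Proposition~\ref{P1}.

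No serious obstacle arises here: the corollary is essentially a packaging of the homeomorphism $\tau$ with Lemma~\ref{L3}, together with the elementary observation that linear subspaces are algebraic sets. All the real work has already been carried out in defining the induced Zariski topology, verifying its canonicity, and proving Lemma~\ref{L3}; what remains is only the two-line bookkeeping described above.
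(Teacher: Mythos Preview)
Your proposal is correct and follows essentially the same approach as the paper: transport irreducibility from $\mathbf{K}^n$ to $V$ via the homeomorphism $\tau$ using Lemma~\ref{L3}, then deduce Proposition~\ref{P1} from the fact that vector subspaces are Zariski closed. You supply more explicit detail (the equations $X_{k+1}=\cdots=X_n=0$ cutting out $W$) than the paper, which simply asserts closedness of subspaces in one line, but the argument is the same.
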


The final sentence is true because vector subspaces of $V$ are closed in the induced Zariski topology.

\subsubsection{Further generalizations}\label{S3.4.2}

We saw that vector subspaces of affine space $\mathbf{K}^n$ are algebraic sets. Till now, we have been seeing various ways to understand how vector subspaces may cover a finite-dimensional vector space $V$ over an infinite field $\mathbf{K}$, by equipping $V$ with the induced  Zariski topology. We now extend these considerations to covering $V$ by algebraic sets. We will assume that the cardinality of the base field $\mathbf{K}$ is $\kappa$, where $\kappa$ is an infinite cardinal.

We first note that the affine space $\mathbf{K}^1$ is not a union of $\mathfrak{I}$-many proper algebraic sets, for any cardinal $\mathfrak{I}<\kappa$. This is because a proper algebraic set $V(\mathcal{F})$ is the set of common roots of some collection of polynomials $\mathcal{F}\subseteq \mathbf{K}[X]$, such that $\mathcal{F}\neq\lbrace 0\rbrace$. Since there exists a non-zero polynomial $f\in \mathcal{F}$, it follows that $V(\mathcal{F})$ is finite as a set. Clearly, then a union of $\mathfrak{I}$-many proper algebraic sets ($\mathfrak{I}<\kappa$) has cardinality less than $\kappa$, and hence cannot cover the entire affine space $\mathbf{K}^1$. We claim this is true for any affine space $\mathbf{K}^m$ over an infinite field:

\begin{theorem}\label{T2}
	Let $\mathbf{K}$ be a field of cardinality $\kappa$, an infinite cardinal. Then the affine space $\mathbf{K}^m$ cannot be written as a union of $\mathfrak{I}$-many proper algebraic sets, for any cardinal $\mathfrak{I}<\kappa$ and $m\geq1$.
\end{theorem}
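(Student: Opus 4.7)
The plan is to induct on the dimension $m$, reducing the general case to a one-dimensional fiber argument and invoking the inductive hypothesis to locate a generic fiber where each covering set is forced to be very small.

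For the base case $m = 1$, any proper algebraic set $V(\mathcal{F}) \subsetneq \mathbf{K}^1$ contains some nonzero polynomial (otherwise it would equal $\mathbf{K}$), so it is the zero set of that polynomial and hence finite. A union of $\mathfrak{I} < \kappa$ finite subsets of $\mathbf{K}$ has cardinality at most $\mathfrak{I} \cdot \aleph_0$, which is strictly less than $\kappa$ (it is finite if $\mathfrak{I}$ is finite, and equal to $\mathfrak{I}$ if $\mathfrak{I}$ is infinite). So such a union cannot equal $\mathbf{K}^1$.

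For the inductive step, assume the result for $\mathbf{K}^m$ and suppose toward a contradiction that $\mathbf{K}^{m+1} = \bigcup_{\alpha \in I} V(\mathcal{F}_\alpha)$ with $|I| = \mathfrak{I} < \kappa$ and each $V(\mathcal{F}_\alpha)$ a proper algebraic set. The first reduction is to replace each $\mathcal{F}_\alpha$ by a single nonzero polynomial $f_\alpha$: since $V(\mathcal{F}_\alpha)$ is proper, there is some point not lying in it, giving some polynomial in $\mathcal{F}_\alpha$ nonzero at that point; enlarging the covering sets to $V(f_\alpha)$ keeps them proper and preserves the cover. Next, write each $f_\alpha$ as a polynomial in $X_{m+1}$ with coefficients in $\mathbf{K}[X_1, \dots, X_m]$, say
\[
f_\alpha = \sum_{j=0}^{d_\alpha} g_{\alpha, j}(X_1, \dots, X_m)\, X_{m+1}^{\,j}, \qquad g_{\alpha, d_\alpha} \neq 0.
\]
Each leading coefficient $g_{\alpha, d_\alpha}$ is a nonzero polynomial on $\mathbf{K}^m$, and because $\mathbf{K}$ is infinite its vanishing locus $W_\alpha := V(g_{\alpha, d_\alpha})$ is a proper algebraic subset of $\mathbf{K}^m$.

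The key step is to invoke the inductive hypothesis on the family $\{W_\alpha\}_{\alpha \in I}$: since $|I| = \mathfrak{I} < \kappa$, their union is a proper subset of $\mathbf{K}^m$, so we may pick a point $p \in \mathbf{K}^m$ with $g_{\alpha, d_\alpha}(p) \neq 0$ for every $\alpha$. Restricting the cover to the fiber $\{p\} \times \mathbf{K}$, each intersection $V(f_\alpha) \cap (\{p\} \times \mathbf{K})$ is the zero locus of the nonzero one-variable polynomial $f_\alpha(p, X_{m+1})$ of degree $d_\alpha$, and hence is finite of size at most $d_\alpha$. But then $\mathbf{K} \cong \{p\} \times \mathbf{K}$ would be a union of $\mathfrak{I} < \kappa$ finite sets, contradicting the $m = 1$ base case. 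This completes the induction.

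The only delicate point I foresee is the reduction to single polynomials and the choice of a generic $p$; the arithmetic of cardinals at the end is clean because $\mathfrak{I} \cdot \aleph_0 < \kappa$ whenever $\mathfrak{I} < \kappa$ and $\kappa$ is infinite, so no separate treatment of the case $\mathfrak{I}$ finite versus infinite is really needed beyond observing this bound.
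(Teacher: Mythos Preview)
Your proof is correct and proceeds by induction on $m$ as the paper does, but the inductive step is organized quite differently. The paper slices $\mathbf{K}^m$ by the $\kappa$-many parallel hyperplanes $X_m = a$; by the inductive hypothesis each such hyperplane (a copy of $\mathbf{K}^{m-1}$) must be entirely contained in some $V_i$, and then a UFD/Gauss-lemma argument shows that any proper algebraic set can contain only finitely many distinct hyperplanes, so fewer than $\kappa$ sets cannot swallow all $\kappa$ hyperplanes. You instead first enlarge each covering set to a hypersurface $V(f_\alpha)$, apply the inductive hypothesis to the leading-coefficient loci $W_\alpha \subset \mathbf{K}^m$ to locate a point $p$ avoiding all of them, and then restrict to the one-dimensional fiber $\{p\}\times\mathbf{K}$, where every $f_\alpha(p,X_{m+1})$ is a nonzero univariate polynomial and hence has finite zero set. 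Your route is shorter and more elementary: it sidesteps the irreducibility and unique-factorization bookkeeping entirely, at the modest cost of the preliminary reduction to single polynomials (which is immediate since $\mathbf{K}$ is infinite). The paper's argument, by contrast, yields the slightly finer intermediate fact that a proper algebraic set contains only finitely many hyperplanes.
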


\begin{proof}
	The $m=1$ case was shown above. Assume the result is true for $\mathbf{K}^{m-1}$ for some $m\in\mathbb{N}$, $m>1$. Suppose $\mathbf{K}^m$ can be covered by a union of $\mathfrak{I}$-many proper algebraic sets $\lbrace V_i\rbrace_{i\in \mathcal{I}}$, for some index set $\mathcal{I}$ of cardinality $\mathfrak{I}<\kappa$. Note that $V_i=V(J_i)$, for some ideal $\lbrace0\rbrace\subsetneq J_i\subset \mathbf{K}[X_1, X_2, \dots, X_m]$, $\forall i\in \mathcal{I}$. Any Zariski closed subspace of $\mathbf{K}^m$ given by $V((a_0+a_1X_1+a_2X_2+\dots+a_m X_m))$ is a hyperplane in $\mathbf{K}^m$. Clearly there are at least $\kappa$-many disjoint hyperplanes in $\mathbf{K}^m$ given by $X_m=a$, for all $a\in \mathbf{K}$, if $|\mathbf{K}|=\kappa$. Any hyperplane $X=V((a_0+a_1X_1+a_2X_2+\dots+a_mX_m))$ is a homeomorphic copy of $\mathbf{K}^{m-1}$ in $\mathbf{K}^m$. Since $\lbrace V_i\rbrace_{i\in\mathcal{I}}$ covers $\mathbf{K}^m$, it follows that $\lbrace V_i\cap X\rbrace_{i\in\mathcal{I}}$ covers $X$. By the induction hypothesis, this is not possible unless $X\subseteq V_i$ for some $i\in\mathcal{I}$. Thus, if $\lbrace V_i\rbrace_{i\in\mathcal{I}}$ covers $\mathbf{K}^m$, then each hyperplane is contained in some $V_i$, $i\in\mathcal{I}$.
	
	Let $V_0$ be a particular algebraic set in the collection $\lbrace V_i\rbrace_{i\in\mathcal{I}}$. Let $\lbrace H_{j}\rbrace_{j\in\mathcal{F}}$ be the collection of distinct hyperplanes in $\mathbf{K}^{m}$, contained in $V_0$. Let $H_{j}=V(f_{j}(X_1, X_2, \dots, X_m))$, where $f_{j}$ is a linear polynomial, $\forall j\in \mathcal{F}$. Consider a particular hyperplane $H_{j_0}=V(f_{j_0})$ in the collection, where $f_{j_0}(X_1, X_2, \dots, X_m)= a_0+a_1X_1+a_2X_2+\dots+a_mX_m$ and, without loss of generality, assume $a_m=1$. Then $f_{j_0}$ is a primitive polynomial in $\mathbf{K}[X_1, X_2, \dots, X_{m-1}][X_m]$. Since it is linear in $X_m$, $f_{j_0}$ is a linear polynomial in $\mathbf{K}(X_1, X_2, \dots, X_{m-1})[X_m]$, and is hence irreducible in $\mathbf{K}(X_1, X_2, \dots, X_{m-1})[X_m]$. By Gauss's lemma, $f_{j_0}$ is irreducible in $\mathbf{K}[X_1, X_2, \dots, X_{m-1}][X_m]$, i.e. in $\mathbf{K}[X_1, X_2, \dots, X_m]$. Moreover, $j_0\in\mathcal{F}$ was arbitrary.
	
	Thus, for the collection $\lbrace H_{j}\rbrace_{j\in\mathcal{F}}$ of distinct hyperplanes in $\mathbf{K}^{m}$ contained in $V_0$, let $H_{j}=V(f_{j}(X_1, X_2, \dots, X_m))$, where $f_{j}$ is a linear polynomial (and hence irreducible in $\mathbf{K}[X_1, \dots, X_m]$), $\forall j\in \mathcal{F}$. Let $J_0$ be the vanishing ideal of $V_0$ in $\mathbf{K}[X_1, \dots, X_m]$, i.e $V_0=V(J_0)$. Since $\bigcup_{j\in\mathcal{F}} H_{j}\subseteq V_0$, it follows that $J_0\subseteq \bigcap_{j\in\mathcal{F}}(f_{j})$. We claim that $\bigcap_{j\in\mathcal{F}}(f_{j})$ is non-trivial only if the index set $\mathcal{F}$ is a finite set. This is because if $0\neq g\in \bigcap_{j\in\mathcal{F}}(f_{j})$, then $f_{j}|g$ for all $j\in\mathcal{F}$, and $f_{j}$ are all distinct prime elements, none of which are associates of each other (since they correspond to distinct hyperplanes). Since $\mathbf{K}[X_1, X_2, \dots, X_m]$ is a UFD, $\mathcal{F}$ must be finite. Since $V_0\in\lbrace V_i\rbrace_{i\in\mathcal{I}}$ is arbitrary, it follows that any algebraic set $V_i$ can contain only finitely many distinct hyperplanes.
	
	Thus, the collection $\lbrace V_i\rbrace_{i\in\mathcal{I}}$ can cover finitely many distinct hyperplanes if $|\mathcal{I}|=\mathfrak{I}$ is finite and at most $\mathfrak{I}$ many distinct hyperplanes, if $\mathfrak{I}$ is infinite. Hence, if $\mathfrak{I}<\kappa=|\mathbf{K}|$, then the collection $\lbrace V_i\rbrace_{i\in\mathcal{I}}$ cannot even cover all the hyperplanes in $\mathbf{K}^m$, and hence cannot cover the entire affine space $\mathbf{K}^m$. We are done by induction on the dimension of the affine space.
\end{proof}

The above result also holds for $m$-dimensional vector spaces over $\mathbf{K}$, with the induced Zariski topology, in light of the homeomorphism with $\mathbf{K}^m$. Thus, Theorem~\ref{T2} generalizes Proposition~\ref{P6} to cover by algebraic sets. One can interpret the above result, as a strengthening of the irreducibility of the Zariski topology, as follows. 

\begin{definition}\label{D3}
      For a finite-dimensional vector space $V$ over an infinite base field $\mathbf{K}$, the minimum (cardinal) number of proper closed subsets of $V$ with the induced Zariski topology over $\mathbf{K}$, whose union covers $V$, is said to be the \textit{Zariski covering number} of $V$ and is denoted by $\sigma_{\tau}(V, \mathbf{K})$.
\end{definition}

Using the above definition,  we see that for an infinite field $\mathbf{K}$, the irreducibility of the topological space $V$ is equivalent to the inequality $\sigma_{\tau}(V, \mathbf{K})\geq \aleph_0$. Theorem~\ref{T2} strengthens this inequality, to the equality $\sigma_{\tau}(V, \mathbf{K})=|\mathbf{K}|+1$, for any finite-dimensional vector space $V$ with the induced Zariski topology over an infinite base field $\mathbf{K}$. For finite-dimensional vector spaces $V$ with $\dim V\geq2$, one also has the equality $\sigma_{\tau}(V, \mathbf{K})=\sigma(V, \mathbf{K})$, when considering $V$ with the induced Zariski topology, over an infinite base field.  The notation suppresses the vector-space dimension $m$ since clearly, both these quantities are independent of it. The significance of this equality is that it allows us to interpret $\sigma (V, \mathbf{K})$, an algebraic quantity, as a topological quantity.

Now we will interpret Theorem~\ref{T2} as a generalization of  the Baire Category Theorem.

\begin{definition}\label{D4}
\begin{enumerate}
    \item A topological space $X$ is a \textit{$\kappa$-Baire space} if for any collection of dense open subsets $\lbrace \mathcal{O}_i\rbrace_{i\in I}$ of $X$ such that $|I|=\mathfrak{I}<\kappa$, $\cap_{i\in I}\mathcal{O}_i$ is dense in $X$ as well.
    
    \item A topological space $X$, which is a $\kappa$-Baire space, but is not a $\aleph$-Baire space for any cardinal number $\aleph>\kappa$, is said to be a \textit{$\kappa$-maximal Baire space}.
\end{enumerate}
	 
\end{definition}

Assuming the continuum hypothesis, an ordinary Baire space in the literature is an $\aleph_1$- Baire space, by our definition.

\begin{corollary}
	Let $\mathbf{K}$ be a field of cardinality $\kappa$, an infinite cardinal. Then the affine space $\mathbf{K}^n$ is a $\kappa$-maximal Baire space. 
\end{corollary}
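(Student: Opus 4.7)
The plan is to establish the two conditions packaged into Definition~\ref{D4}: that $\mathbf{K}^n$ is a $\kappa$-Baire space, and that it fails to be $\aleph$-Baire for any larger cardinal $\aleph$. Both directions will feed off Theorem~\ref{T2}, which is already exactly the right combinatorial input.

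For the first condition, let $\{\mathcal{O}_i\}_{i\in I}$ be a collection of dense open subsets of $\mathbf{K}^n$ with $|I|=\mathfrak{I}<\kappa$. To verify density of $\bigcap_{i\in I}\mathcal{O}_i$, I would pick an arbitrary non-empty open set $U\subseteq\mathbf{K}^n$ and show $U\cap\bigcap_{i\in I}\mathcal{O}_i\neq\emptyset$. Writing $U=\mathbf{K}^n\setminus V$ and $\mathcal{O}_i=\mathbf{K}^n\setminus V_i$, the closed sets $V$ and $V_i$ are all proper (since $U$ is non-empty and each $\mathcal{O}_i$ is dense), and the condition to verify becomes
\[
\mathbf{K}^n \;\neq\; V\cup\bigcup_{i\in I}V_i.
\]
The right-hand side is a union of $\mathfrak{I}+1<\kappa$ proper algebraic sets, so Theorem~\ref{T2} applies directly to conclude that the union does not cover $\mathbf{K}^n$.

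For the second condition, I need to exhibit, for every cardinal $\aleph>\kappa$, a family of fewer than $\aleph$ many dense open subsets of $\mathbf{K}^n$ whose intersection fails to be dense. I would take the family of parallel hyperplanes $\{H_a:=V(X_n-a)\}_{a\in\mathbf{K}}$ and their dense open complements $\{\mathcal{O}_a:=\mathbf{K}^n\setminus H_a\}_{a\in\mathbf{K}}$. This family has cardinality $\kappa<\aleph$, and
\[
\bigcap_{a\in\mathbf{K}}\mathcal{O}_a \;=\; \mathbf{K}^n\setminus\bigcup_{a\in\mathbf{K}}H_a \;=\; \emptyset,
\]
since every point of $\mathbf{K}^n$ lies on the hyperplane determined by its $n$-th coordinate. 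An empty set is not dense in $\mathbf{K}^n$, which completes this direction.

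No step presents a serious obstacle here: the only point that needs care is the distinction between ``the union of $V$ and the $V_i$ does not cover $\mathbf{K}^n$'' and ``the intersection $\bigcap_i\mathcal{O}_i$ is dense,'' which is handled cleanly by quantifying over an arbitrary non-empty open $U$ and folding $V$ into the union. The rest is a direct appeal to Theorem~\ref{T2} in one direction and to the explicit hyperplane cover $\{X_n=a\}_{a\in\mathbf{K}}$ in the other.
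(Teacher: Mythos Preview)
Your proof is correct and follows essentially the same approach as the paper: both directions invoke Theorem~\ref{T2} after passing to complements, and your treatment of density via an arbitrary non-empty open $U$ is exactly the paper's argument. The only cosmetic difference is in the maximality step, where the paper uses the family of singleton complements $\{\mathbf{K}^n\setminus\{x\}\}_{x\in\mathbf{K}^n}$ (relying on $|\mathbf{K}^n|=\kappa$ and the $T1$ property) rather than your family of hyperplane complements; both yield a $\kappa$-sized family of dense opens with empty intersection.
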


\begin{proof}
	Let $\lbrace \mathcal{O}_i\rbrace_{i\in I}$ be a collection of (dense) open subsets of affine space $\mathbf{K}^n$ such that $|I|=\mathfrak{I}<\kappa$. Then $\cap_{i\in I}\mathcal{O}_i\neq \emptyset$, since by Theorem~\ref{T2}, $\bigcup_{i\in I}(\mathbf{K}^n\setminus \mathcal{O}_i)\neq \mathbf{K}^n$. Let $x\in \mathbf{K}^n\setminus \cap_{i\in I}\mathcal{O}_i$, and let $\mathcal{U}$ be any open neighborhood of $x$. Then $\mathcal{U}\cap \bigcap_{i\in I}\mathcal{O}_i\neq \emptyset $, since $(\mathbf{K}^n\setminus\mathcal{U})\cup\bigcup_{i\in I}(\mathbf{K}^n\setminus \mathcal{O}_i)\neq \mathbf{K}^n$, by Theorem~\ref{T2}. Thus $x$ is a limit point of $\cap_{i\in I}\mathcal{O}_i$. Since $x$ is arbitrary, it follows that $\cap_{i\in I}\mathcal{O}_i$ is dense in affine space $\mathbf{K}^n$, thereby proving that $\mathbf{K}^n$ is a $\kappa$-Baire space. Clearly $|\mathbf{K}^n|=\kappa$, and thus $\bigcap_{x\in\mathbf{K}^n}(\mathbf{K}^n\setminus\lbrace x\rbrace)=\emptyset$ is not dense in $\mathbf{K}^n$. Hence, $\mathbf{K}^n$ is a $\kappa$-maximal Baire space.
\end{proof}

\section{Covering problem for modules}\label{S4}

This section studies the results of the previous section in greater generality. We start by considering a certain class of modules over rings which naturally generalize vector spaces over fields, namely semi-simple modules. \textbf{All rings considered here are unital and commutative}.

\subsection{Motivation for semi-simple modules}\label{S4.1}

We first quickly review two key points in the proof of Proposition~\ref{P1}, via Lemma~\ref{L1}. Note that a vector space $V$ with a basis $\lbrace e_1, e_2, \dots, e_n\rbrace$ over a field $\mathbf{K}$ can be written as a direct sum of $1$-dimensional subspaces: $V=\mathbf{K}e_1\oplus \mathbf{K}e_2\oplus \dots \oplus \mathbf{K}e_n$. Here, $(a)$ the  direct summands are all simple/irreducible $\mathbf{K}$-modules and $(b)$ there exists a dimension function $\dim:Vec_{\mathbf{K}} \rightarrow \mathbb{N}$, which is additive over short exact sequences in $Vec_{\mathbf{K}}$. These properties admit extensions to the category of finitely generated semi-simple $R$-modules, where the notion of \textit{length} plays the role of dimension. We quickly recall some basics, for completeness.

\begin{definition}[Length]\label{D5}
	 Let $R$ be a ring and $M$ an $R$-module. We call $M\neq0$ simple if its only proper submodule is $0$. We call a chain of submodules, $0=M_0\subsetneq M_1\subsetneq M_2 \subsetneq\dots\subsetneq M_m=M$ a composition series of length $m$ if each successive quotient $M_i/M_{i-1}$ is simple. Define the length $\len(M)$ of $M$ to be the length of any composition series of $M$. By convention, if $M$ has no (finite) composition series, one defines $\len(M) := \infty$. Further $\len(M)=0$ if and only if $M=0$. 
\end{definition}

The length is well-defined (i.e. independent of the choice of the composition series of $M$) by the Jordan-Hölder theorem. Also, for finitely generated semi-simple modules, the length of the module equals the number of summands in its decomposition into the direct sum of simple modules.

\subsection{The covering number of modules with finite dual Goldie dimension}\label{S4.2}

We first recall the quantity of interest in this section.

\begin{definition}
    The \textit{covering number} of a given $R$-module $M$, denoted by $\sigma(M, R)$, is the minimum (cardinal) number of proper $R$-submodules whose union equals $M$.
\end{definition}

Throughout this section, given a finitely generated $R$-module $M$, $S_M$ will denote the set $\lbrace \mathfrak{m}\in\mspec(R):$ $\dim_{R/\mathfrak{m}}(M/\mathfrak{m}M)\geq 2\rbrace$. It is easy to see via Theorem~\ref{T1} that for any module $M$ over a commutative ring $R$, $\sigma(M, R)\leq \min_{\mathfrak{m}\in S_M}|R/\mathfrak{m}|+1$, as shown in \cite{khare2020}. In the same paper, the authors prove that the equality $\sigma(M, R)=\min_{\mathfrak{m}\in S_M}|R/\mathfrak{m}|+1$ holds for some classes of modules, a few of which are: $(a)$ modules $M$ admitting a finite covering, $(b)$ finitely generated modules $M$ over quasi-local rings, and $(c)$ finitely generated torsion modules over Dedekind domains.

In this section, we will prove that the equality $\sigma(M, R)=\min_{\mathfrak{m}\in S_M}|R/\mathfrak{m}|+1$ also holds for modules with finite dual Goldie dimension (this includes cases $(b)$ and $(c)$ in the preceding paragraph). One can refer to \cite{dgdi} for the definition of the dual Goldie dimension, along with some of its properties. Although the authors of \cite{dgdi} use the term ``corank" and the notation $corank(\cdot)$ for dual Goldie dimension, we will use the relatively modern notation $\hdim(\cdot)$, as in \cite{lomp}, since the dual Goldie dimension is also known as the \textit{hollow dimension}. However, we first show that the equality $\sigma(M, R)=\min_{\mathfrak{m}\in S_M}|R/\mathfrak{m}|+1$ holds for semi-simple modules with finite length, which will be crucial in proving the equality for the general case.

\begin{proposition}\label{P12}
	Let $M$ be a finite length semi-simple $R$-module over a ring $R$. If $S_M=\emptyset$, then $M$ is cyclic, and hence cannot be covered by proper submodules. Else $\sigma(M, R)=\min_{\mathfrak{m}\in S_M}|R/\mathfrak{m}|+1$.
\end{proposition}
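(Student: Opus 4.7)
The plan is to exploit the semisimple structure of $M$. Any finite-length semisimple $R$-module decomposes canonically as $M = \bigoplus_{\mathfrak{m} \in T} V_{\mathfrak{m}}$, where $T$ is the finite set of maximal ideals $\mathfrak{m}$ for which the $\mathfrak{m}$-isotypic component $V_{\mathfrak{m}}$ (a finite-dimensional $R/\mathfrak{m}$-vector space) is nonzero. A short CRT-style argument -- using elements $r \in R$ with $r \equiv 1 \pmod{\mathfrak{m}_0}$ and $r \equiv 0 \pmod{\mathfrak{m}}$ for every other $\mathfrak{m} \in T$ -- shows that every $R$-submodule $N \subseteq M$ respects this decomposition: $N = \bigoplus_{\mathfrak{m} \in T} (N \cap V_{\mathfrak{m}})$, with each $N \cap V_{\mathfrak{m}}$ an $R/\mathfrak{m}$-subspace. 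Under this correspondence $S_M = \{\mathfrak{m} \in T : \dim_{R/\mathfrak{m}} V_{\mathfrak{m}} \geq 2\}$. If $S_M = \emptyset$, every nonzero $V_{\mathfrak{m}} \cong R/\mathfrak{m}$, so CRT yields $M \cong R/\bigcap_{\mathfrak{m} \in T} \mathfrak{m}$, which is cyclic.

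Now assume $S_M \neq \emptyset$ and fix $\mathfrak{m}^* \in S_M$ attaining $|R/\mathfrak{m}^*| = \min_{\mathfrak{m} \in S_M} |R/\mathfrak{m}|$. The upper bound $\sigma(M, R) \leq |R/\mathfrak{m}^*|+1$ follows by pulling back a minimal cover of $V_{\mathfrak{m}^*} = M/\mathfrak{m}^*M$ (of size $|R/\mathfrak{m}^*|+1$, by Theorem~\ref{T1}) under the quotient $M \twoheadrightarrow V_{\mathfrak{m}^*}$. The crux is the matching lower bound, which I will prove by induction on $\mathrm{len}(M)$. Write $M = V_{\mathfrak{m}^*} \oplus W$ with $W := \bigoplus_{\mathfrak{m} \in T \setminus \{\mathfrak{m}^*\}} V_{\mathfrak{m}}$. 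Given any cover $\{N_i\}_{i \in I}$ by proper submodules, the decomposition result above writes $N_i = V_i \oplus W_i$; partition $I = I_1 \sqcup I_2$, where $I_1 := \{i : V_i \subsetneq V_{\mathfrak{m}^*}\}$ and $I_2 := \{i : V_i = V_{\mathfrak{m}^*}\}$ (so $W_i \subsetneq W$ for every $i \in I_2$).

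I then case-split on whether $\{W_i\}_{i \in I_2}$ covers $W$. If it does not, pick $w_0 \in W \setminus \bigcup_{i \in I_2} W_i$; then for every $v \in V_{\mathfrak{m}^*}$ the element $v + w_0$ avoids every $N_i$ with $i \in I_2$, and must therefore lie in some $N_i$ with $i \in I_1$ and $w_0 \in W_i$. Thus $\{V_i : i \in I_1,\; w_0 \in W_i\}$ is a cover of $V_{\mathfrak{m}^*}$ by proper subspaces, and Theorem~\ref{T1} (since $\dim_{R/\mathfrak{m}^*} V_{\mathfrak{m}^*} \geq 2$) yields $|I_1| \geq |R/\mathfrak{m}^*|+1$. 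If instead $\{W_i\}_{i \in I_2}$ does cover $W$, then it does so by proper submodules of the finite-length semisimple module $W$ with $\mathrm{len}(W) < \mathrm{len}(M)$; the induction hypothesis applies, noting that $S_W \neq \emptyset$ (else $W$ would be cyclic by the $S_M = \emptyset$ case above, contradicting coverability) and $S_W = S_M \setminus \{\mathfrak{m}^*\}$, so minimality of $|R/\mathfrak{m}^*|$ gives $|I_2| \geq \min_{\mathfrak{m} \in S_W} |R/\mathfrak{m}|+1 \geq |R/\mathfrak{m}^*|+1$. The base case $W = 0$ (i.e., $M = V_{\mathfrak{m}^*}$ is a single vector space) is handled directly by Theorem~\ref{T1}.

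The main obstacle I anticipate is the non-symmetric bookkeeping in the case split: a proper submodule of $M = V_{\mathfrak{m}^*} \oplus W$ can fail to be full in either summand independently, and the two subfamilies $I_1, I_2$ witness genuinely different covering phenomena (covering the vector space $V_{\mathfrak{m}^*}$ by subspaces versus covering the smaller semisimple module $W$ by proper submodules). Partitioning by whether $V_i$ fills $V_{\mathfrak{m}^*}$ and then conditioning on a witness $w_0 \in W$ is precisely what allows these two cases to be played against each other, while minimality in the choice of $\mathfrak{m}^*$ keeps the target lower bound $|R/\mathfrak{m}^*|+1$ in force throughout the induction.
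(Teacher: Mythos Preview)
Your argument is correct, and it reaches the same conclusion as the paper's proof, but the inductive mechanism is genuinely different. The paper also inducts on length but reduces the length by exactly one at each step: it produces $|R/\mathfrak{m}_1|+1$ many \emph{maximal} submodules of $M$ (each obtained by replacing the $\mathfrak{m}_1$-isotypic component by one of its hyperplanes and keeping all other components full), then uses pigeonhole to find one such maximal submodule $N$ not among the $M_j$'s, so that the intersections $N \cap M_j$ are proper and the inductive hypothesis applies to $N$. Your approach instead peels off the \emph{entire} $\mathfrak{m}^*$-isotypic component at once, leaning on the structural fact that every submodule of $M$ decomposes along the isotypic summands; the case split on whether $\{W_i\}_{i \in I_2}$ covers $W$ replaces the paper's pigeonhole step. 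Your route is arguably cleaner conceptually (the submodule decomposition lemma does most of the organizing work, and Case~1 terminates the induction immediately at the vector-space result rather than descending one dimension at a time), while the paper's route is more in the spirit of its earlier vector-space proofs (Lemma~\ref{L1} and Proposition~\ref{P3}) and avoids the explicit case split.
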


In fact this holds even without assuming semi-simplicity; see Theorem~\ref{T5}.

\begin{proof}
	For a finite length semi-simple module $M$ over $R$ with $\len(M)=k$, write $M\cong \bigoplus_{i=1}^{r}(R/\mathfrak{m}_i)^{k_i}$, where $\mathfrak{m}_i$ are pairwise distinct maximal ideals of $R$ for $i=1, \dots, r$ and $k_1, \dots, k_r$ are positive integers that sum to $k$. Clearly if $S_M=\emptyset$, $k_i=1$ for all $i=1, \dots, r$, and it follows by the Chinese remainder theorem that $M$ is cyclic, and hence not a union of proper submodules.
	
	Now assume $S_M\neq\emptyset$. We know that for any maximal ideal $\mathfrak{m}$, $\mathfrak{m}(R/\mathfrak{m_i})$ is $0$ if $\mathfrak{m}=\mathfrak{m}_i$ and is $R/\mathfrak{m}_i$ if $\mathfrak{m}\neq\mathfrak{m_i}$. Thus, $M/\mathfrak{m}_iM\cong (R/\mathfrak{m}_i)^{k_i}$ for $i=1, \dots, r$, and $M/\mathfrak{m}M=0$  if $\mathfrak{m}\neq \mathfrak{m}_i$ for any $i$.
	
	We proceed by induction on length. Let $M$ be a semi-simple module over $R$ of length $2$ with $S_M\neq\emptyset$. This implies $M\cong R/\mathfrak{m}\oplus R/\mathfrak{m}$, which is a $2$-dimensional $R/\mathfrak{m}$ vector space, for which we know $\sigma(M, R)=|R/\mathfrak{m}|+1=\min_{\mathfrak{m}\in S_M}|R/\mathfrak{m}| +1$ by the previous paragraph, and results of Section~\ref{S3}. Now assume the result is true for any semi-simple $R$-module of length $k-1$ for some $k\geq 3$. Let $M$ be a semi-simple $R$-module of length $k$, for which $S_M\neq \emptyset$. Then $M\cong \bigoplus_{i=1}^{r}(R/\mathfrak{m}_i)^{k_i}\cong \bigoplus_{i=1}^{l}(R/\mathfrak{m}_i)^{k_i}\oplus (R/(\mathfrak{m}_{l+1}\cdots \mathfrak{m}_r))$, where $S_M=\lbrace \mathfrak{m}_1, \dots, \mathfrak{m}_l\rbrace$. Without loss of generality, let $\mathfrak{m}_1\in S_M$ be the one such that $|R/\mathfrak{m}_1|=\min_{\mathfrak{m}\in S_M}|R/\mathfrak{m}|$. Since $k_1\geq 2$, we know that $(R/\mathfrak{m}_1)^{k_1}$ has at least $|R/\mathfrak{m}_1|+1$ distinct maximal $R$-submodules (or $R/\mathfrak{m}_1$-subspaces). Let these be $\lbrace N_j\rbrace_{j\in\mathcal{K}}$, where $|\mathcal{K}|=\kappa= |R/\mathfrak{m}_1|+1$. Then we have at least $\kappa$-many length $k-1$ proper submodules of $M$, namely, $N_j\oplus\bigoplus_{i=2}^{l}(R/\mathfrak{m}_i)^{k_i}\oplus (R/(\mathfrak{m}_{l+1}\cdots \mathfrak{m}_r))$, $\forall j\in\mathcal{K}$.
	
	Assume $M=\bigcup_{j\in J} M_j$, where $|J|<\kappa$. Then there exists a length $k-1$ submodule of $M$, say $N$ of the form $N=N_{i_0}\oplus\bigoplus_{i=2}^{l}(R/\mathfrak{m}_i)^{k_i}\oplus (R/(\mathfrak{m}_{l+1}\cdots \mathfrak{m}_r))$, for some maximal $R$-submodule $N_{i_0}$ of $(R/\mathfrak{m}_1)^{k_1}$,  such that $N=\bigcup_{j\in J}(N\cap M_j)$ is a union of at most $|J|$ many proper submodules. Let $S_N$ be the collection of maximal ideals $\mathfrak{m}$ of $R$ for which $\dim_{R/\mathfrak{m}}(N/\mathfrak{m}N)\geq 2$. By the induction hypothesis, it follows that $|J|\geq \sigma(N, R)=\min_{\mathfrak{m}\in S_N}|R/\mathfrak{m}|+1$. Now $S_N=S_M$ if $\dim_{R/\mathfrak{m}_1}(N_{i_0})\geq2$, and $S_N=S_M\setminus\lbrace \mathfrak{m}_1\rbrace$ if $\dim_{R/\mathfrak{m}_1}(N_{i_0})=1$. In either case, $\min_{\mathfrak{m}\in S_N}|R/\mathfrak{m}|+1\geq \min_{\mathfrak{m}\in S_M}|R/\mathfrak{m}|+1$. Thus $|J|\geq \min_{\mathfrak{m}\in S_M}|R/\mathfrak{m}|+1=\kappa$, violating $|J|<\kappa$. Thus $\sigma(M, R)\geq \min_{\mathfrak{m}\in S_M}|R/\mathfrak{m}|+1 $. The reverse equality holds trivially, thereby proving the equality by induction.
\end{proof}

As an immediate consequence, one obtains the impossibility of a finite covering of a semi-simple module of \textit{any length} over a ring $R$ with infinite residue fields, by proper submodules. 

\begin{corollary}\label{C7}
	Let $M$ be \textit{any} semi-simple $R$-module over a ring $R$ with all residue fields infinite. Then $M$ is not the union of a finite number of proper submodules.
\end{corollary}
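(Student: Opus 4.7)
The plan is to reduce the general (possibly infinite-length) semi-simple case to the finite-length setting already handled by Proposition~\ref{P12}. Suppose for contradiction that $M = \bigcup_{j=1}^{n} M_j$ is a finite union of proper submodules. For each $j$ I would pick an element $x_j \in M \setminus M_j$, and let $N := \langle x_1, \dots, x_n \rangle_R$ denote the finitely generated $R$-submodule of $M$ generated by these elements.

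Next I would argue that $N$ has finite length. Since submodules of semi-simple modules are themselves semi-simple (a standard fact that follows from the characterization of semi-simple modules as sums of simple submodules), $N$ is a semi-simple $R$-module. Any finitely generated semi-simple module is a direct sum of finitely many simple modules, so $\len(N) < \infty$.

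Now I would exploit the identity $N = \bigcup_{j=1}^{n}(N \cap M_j)$, which expresses $N$ as a finite union of its own submodules, with each intersection $N \cap M_j$ a \emph{proper} submodule of $N$ (because $x_j \in N$ while $x_j \notin M_j$). Applying Proposition~\ref{P12} to the finite-length semi-simple module $N$, there are two cases. If $S_N = \emptyset$, then $N$ is cyclic and hence cannot be written as a union of proper submodules at all — a contradiction. Otherwise $\sigma(N, R) = \min_{\mathfrak{m} \in S_N}|R/\mathfrak{m}| + 1$; but by hypothesis every residue field of $R$ is infinite, so this cardinal is infinite, contradicting that $N$ is covered by the $n < \infty$ proper submodules $N \cap M_j$.

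The main obstacle is essentially psychological rather than technical: one must spot that the hypothetical finite cover lets one pick witnesses $x_j$ and thus pass to a finitely generated (hence finite-length) submodule $N$ on which the cover restricts to a genuine proper-submodule cover. Once this reduction is in place, Proposition~\ref{P12} and the infiniteness of the residue fields do all the work, with no additional input needed about the (possibly wild) direct-sum decomposition of $M$ itself.
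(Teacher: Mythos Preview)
Your proof is correct and follows essentially the same approach as the paper: pick witnesses $x_j \in M \setminus M_j$, pass to the finitely generated submodule they span, observe it is finite-length semi-simple, and derive a contradiction from Proposition~\ref{P12}. The only cosmetic difference is that the paper establishes finite length by explicitly embedding this submodule in a finite sub-sum of the decomposition $M = \bigoplus_{i \in I} N^{(i)}$, whereas you invoke directly that finitely generated semi-simple modules have finite length (which is Proposition~\ref{P9}(b) in the paper); your route is marginally cleaner but the argument is the same.
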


\begin{proof}
	Write $M=\bigoplus_{i\in I}N^{(i)}$, where $N^{(i)}$ are simple $R$-modules. Assume $M$ is a finite union of proper submodules $M_1, M_2, \dots, M_n$. Choose $x_j\in M\setminus M_j$. By the form of $M$, for each $j$, there exists a finite subset $F_j\subset I$, such that $x_j \in \bigoplus_{i\in F_j}N^{(i)}$. Consider the submodule $L$ of $M$ generated by $x_1, x_2, \dots, x_n$. Then $L\subseteq\bigoplus_{i \in \bigcup_j F_j}N^{(i)}$. Clearly $L$ is a semi-simple submodule of finite length. For each $j$, $M_j\cap L$ is a proper submodule of $L$ as $x_j\in L\setminus M_j$. Since $M=\bigcup_{i=1}^{n}M_i$, it implies $L=\bigcup_{i=1}^{n}M_i\cap L$. But this contradicts Proposition~\ref{P12}.
\end{proof}

We now strengthen Proposition~\ref{P12} to one of the main results of this paper-- see Theorem~\ref{introT2}, which we restate here for the reader's convenience.

\begin{theorem}\label{T5}
	Let $M$ be an $R$-module with small Jacobson radical (special case: $M$ is finitely generated) and finite dual Goldie dimension over a ring $R$. Let $S_M$ denote the set of maximal ideals $\mathfrak{m}$ of $R$ such that $\dim_{R/\mathfrak{m}}(M/\mathfrak{m}M)\geq 2$. If $S_M=\emptyset$ and $M$ is finitely generated, then $M$ is cyclic, and hence cannot be covered by proper submodules. Else if $S_M\neq\emptyset$, then $\sigma(M, R)=\min_{\mathfrak{m}\in S_M}|R/\mathfrak{m}|+1$.
\end{theorem}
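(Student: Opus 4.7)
My plan is to reduce to the finite-length semi-simple case handled in Proposition~\ref{P12} by passing to the quotient $\bar{M} := M/\jac(M)$, where $\jac(M)$ denotes the Jacobson radical of $M$ (the intersection of all maximal submodules). The two hypotheses combine nicely for this reduction: finite dual Goldie dimension together with smallness of $\jac(M)$ forces $\bar{M}$ to be semi-simple of finite length bounded by $\hdim(M)$ (a standard fact from \cite{dgdi, lomp}), while smallness of $\jac(M)$ itself is precisely what allows a proper-submodule cover of $M$ to descend to one of $\bar{M}$, and what lets cyclicity of $\bar{M}$ lift to cyclicity of $M$.

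First I would identify $S_M$ with $S_{\bar{M}}$. For any $\mathfrak{m} \in \mspec(R)$ with $\mathfrak{m}M \neq M$, the quotient $M/\mathfrak{m}M$ is a nonzero $R/\mathfrak{m}$-vector space, hence has zero Jacobson radical, forcing $\jac(M) \subseteq \mathfrak{m}M$; this yields the canonical isomorphism $M/\mathfrak{m}M \cong \bar{M}/\mathfrak{m}\bar{M}$ as $R/\mathfrak{m}$-vector spaces. The case $\mathfrak{m}M = M$ is handled symmetrically via smallness of $\jac(M)$ (so that $\mathfrak{m}\bar{M} = \bar{M}$ iff $\mathfrak{m}M = M$), yielding $S_M = S_{\bar{M}}$. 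Next I would transfer coverings: if $\{M_i\}_{i \in I}$ is a cover of $M$ by proper submodules, then each $M_i + \jac(M)$ remains proper in $M$ (otherwise smallness would force $M_i = M$), so the images $(M_i + \jac(M))/\jac(M)$ form a cover of $\bar{M}$ by proper submodules. Hence $\sigma(M, R) \geq \sigma(\bar{M}, R)$.

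Applying Proposition~\ref{P12} to $\bar{M}$ then resolves both branches. If $S_M = \emptyset$ (and $M$ is finitely generated), then $\bar{M}$ is cyclic, say $\bar{M} = R\bar{x}$; thus $M = Rx + \jac(M)$, and smallness yields $M = Rx$. If $S_M \neq \emptyset$, then $\sigma(\bar{M}, R) = \min_{\mathfrak{m} \in S_M}|R/\mathfrak{m}|+1$, which combined with the routine reverse inequality from \cite{khare2020} (lift a $(|R/\mathfrak{m}|+1)$-fold cover of $M/\mathfrak{m}M$ back to $M$ for a minimizing $\mathfrak{m} \in S_M$) delivers the desired equality. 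The main obstacle I foresee is not combinatorial but structural: one must cleanly invoke the fact that finite dual Goldie dimension together with small Jacobson radical forces $\bar{M}$ to be semi-simple of finite length, which in turn requires careful bookkeeping of the behavior of $\jac(M)$ relative to the hollow-dimension filtration. Once this structural input is cited properly, the descent and lift arguments above are essentially formal, and Proposition~\ref{P12} does the heavy lifting.
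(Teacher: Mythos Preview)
Your proposal is correct and follows essentially the same route as the paper: reduce to $\bar M = M/\jac(M)$, invoke the structural result (Theorem~\ref{T3}) that finite dual Goldie dimension plus small radical makes $\bar M$ semi-simple of finite length, identify $S_M = S_{\bar M}$ via $\jac(M) \subseteq \mathfrak{m}M$, transfer covers using smallness, and then apply Proposition~\ref{P12}. Your lift of cyclicity ($\bar M = R\bar x \Rightarrow M = Rx + \jac(M) \Rightarrow M = Rx$ by smallness) is in fact slightly cleaner than the paper's contradiction argument, but the overall strategy is identical.
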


Before proving the theorem, we first provide examples with finitely generated modules of finite dual Goldie dimension, to illustrate how Theorem~\ref{T5} generalizes the result for some previously known classes of modules. We begin by recalling the definition of radical of a module:

\begin{definition}[Radical]\label{D6}
	A submodule $N$ of an $R$-module $M$ (over a ring $R$) is called maximal if the quotient $M/N$ is a simple module. The \textit{radical} of the module $M$ is the intersection of all maximal submodules of $M$ and is denoted by $\jac(M)$.
\end{definition}

\begin{enumerate}
	\item Finitely generated modules over quasi-local rings have finite dual Goldie dimension.
	
	 To see this, first note that by Theorem $3.1.10 (1)$ of \cite{lomp}, if $M=\bigoplus_{i=1}^{n}M^{(i)}$, then $\hdim(M)=\hdim(M^{(1)})+\dots + \hdim(M^{(n)})$. The simplest examples of finitely generated modules for a given commutative ring $R$ are the free modules of finite rank. Thus, if $M=\bigoplus_{i}^{n}R$, then $\hdim(M)=n\cdot\hdim(R)$. By Corollary $3.3.5$ of \cite{lomp}, the commutative rings of finite dual Goldie dimension are precisely the quasi-local rings. Thus, if $R$ is a quasi-local ring, then any free $R$-module of finite rank is a module with finite dual Goldie dimension as well. By Theorem $3.1.10 (6)$ of \cite{lomp} and the exact sequence $0\rightarrow N\rightarrow R^n\rightarrow R^n/N\rightarrow 0$, it follows that $\hdim(R^n/N)\leq \hdim(R^n)$, from which one can conclude that any finitely generated module over a quasi-local commutative ring has finite dual Goldie dimension. 
	
	Conversely, the class of commutative rings, over which all finitely generated modules have finite dual Goldie dimension are precisely the class of commutative quasi-local rings, as one can consider the ring itself as a cyclic module over itself.\smallskip
	
	\item Any finite length module over a commutative ring has finite dual Goldie dimension. To see this, we need the following basic facts on finite length and semi-simple modules:

	\begin{proposition}\label{P9}
    Suppose $R$ is a unital commutative ring, and $M$ an $R$-module.
    \begin{enumerate}
        \item $M$ is of finite length if and only if it is both a Noetherian and an Artinian module.
        \item If $M$ is a semi-simple $R$-module then $\len(M)<\infty\iff$ $M$ is finitely generated.
        \item $M$ is finitely generated and semi-simple if and only if it is Artinian with radical zero.
    \end{enumerate}
\end{proposition}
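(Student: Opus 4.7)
The plan is to handle the three parts in order, with each building on the previous. Throughout, I would use Jordan--Hölder style refinement arguments alongside standard chain condition manipulations.

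For part (1), I would prove the forward direction by observing that a composition series $0 = M_0 \subsetneq M_1 \subsetneq \dots \subsetneq M_n = M$ is a maximal strictly ascending chain, and that any other strict chain of submodules has length at most $n$ (by Schreier refinement / Jordan--Hölder). In particular, every ascending and every descending chain of submodules stabilizes, giving both the Noetherian and Artinian properties. For the converse, I would use the Noetherian hypothesis to extract a maximal proper submodule $M_1 \subsetneq M$, then a maximal proper submodule $M_2 \subsetneq M_1$, and so on; each quotient $M_i/M_{i+1}$ is simple. The Artinian hypothesis forces this strictly descending chain to terminate, producing a finite composition series.

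For part (2), the forward direction is immediate from (1): finite length implies Noetherian, hence finitely generated. For the converse, assume $M = \bigoplus_{i \in I} N^{(i)}$ with each $N^{(i)}$ simple and $M$ is generated by $x_1, \dots, x_n$. Each $x_k$ has a unique expression supported on a finite subset $F_k \subseteq I$, so $M$ is contained in (and therefore equals) $\bigoplus_{i \in \bigcup_k F_k} N^{(i)}$, a finite direct sum of simple modules. This direct sum has an obvious composition series, so $\len(M) = |\bigcup_k F_k| < \infty$.

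For part (3), the forward direction uses (2): a finitely generated semi-simple module is a finite direct sum of simples $M = \bigoplus_{j=1}^r N^{(j)}$, which has finite length (hence is Artinian by (1)) and whose radical is zero, as witnessed by the maximal submodules $\widehat{N^{(j)}} := \bigoplus_{i \neq j} N^{(i)}$ whose intersection is $0$. For the converse, assume $M$ is Artinian with $\jac(M) = 0$, and apply Artinianness to the family of finite intersections of maximal submodules of $M$: it contains a minimal element $N_1 \cap \dots \cap N_k$, which must coincide with every further intersection $N_1 \cap \dots \cap N_k \cap N$ for a maximal $N$. Hence $N_1 \cap \dots \cap N_k \subseteq \jac(M) = 0$, so the natural map $M \hookrightarrow \bigoplus_{i=1}^k M/N_i$ is injective. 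The target is a finite direct sum of simple modules, so it is semi-simple (and finitely generated, of finite length); $M$ is therefore isomorphic to a submodule of a finite-length semi-simple module, which forces $M$ itself to be semi-simple and finitely generated.

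The only mildly non-trivial step is the last one, namely that a submodule of a (finitely generated) semi-simple module is itself semi-simple. In our finite-length setting this is straightforward: since the ambient module $\bigoplus_{i=1}^k M/N_i$ has finite length, so does $M$, and a finite-length module with trivial radical splits as a direct sum of simples by induction on length (a maximal proper submodule of $M$ is a complement of a suitable simple summand). I expect this to be the main conceptual point; the rest is bookkeeping with chain conditions.
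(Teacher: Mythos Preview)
Your argument is correct. Note, however, that the paper does not actually prove this proposition: it simply cites Bourbaki (\cite[\S 1, 4, 9]{bourbaki}) and moves on, treating all three parts as standard module-theoretic facts. There is thus no ``paper's approach'' to compare against; you have supplied a self-contained proof where the paper gives only a reference.

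The one place where your sketch could be tightened is the final inductive step in part~(3): to iterate the splitting $M = S \oplus N$ (with $S$ simple and $N$ maximal) you implicitly need $\jac(N)=0$ for the induction hypothesis, which follows from the general identity $\jac(A \oplus B) = \jac(A) \oplus \jac(B)$. Alternatively, one can bypass the induction entirely and invoke directly that a submodule of a semi-simple module is semi-simple.
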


  The proofs can be found in \cite[\S 1, 4, 9]{bourbaki}. This proposition, along with Theorem~\ref{T3} below (which is a crucial part in the proof of Theorem~\ref{T5}), proves the claim.\smallskip
	
	\item Every Artinian module has finite dual Goldie dimension, as shown in \cite{lomp}. Thus, Theorem~\ref{T5} also holds for any Artinian module with small radical over a commutative ring.\smallskip
	
	\item Referring to section $3.5.15$ of \cite{lomp}, one can get a complete classification of Abelian groups (i.e $\mathbb{Z}$-modules) of finite dual Goldie dimension as follows:
	\begin{enumerate}
		\item A non-zero torsion free Abelian group $A$ has $\hdim(A)=\infty$.
		\item $A$ is hollow, i.e., has dual Goldie dimension $1$ if and only if $A\cong \mathbb{Z}_{p^k}$ for some prime $p$ and $k\in\mathbb{N}\cup\lbrace\infty\rbrace$.
		\item $A$ has finite dual Goldie dimension if and only if it is a finite direct sum of hollow Abelian groups.
	\end{enumerate}
	
	 Since the Prüfer $p$-group $\mathbb{Z}_{p^\infty}$ is not finitely generated, it follows that Theorem~\ref{T5} holds for any finitely generated torsion Abelian group.\smallskip
	
	\item The previous example generalizes to a certain class of finitely generated modules over Dedekind domains. Hollow modules over Dedekind domains are characterized as follows:
	
	\begin{proposition}[Corollary $2.4$ of \cite{rangaswamy}]\label{46}
		Let $R$ be a Dedekind domain. Then an $R$-module $A$ is hollow if and only if: $(i)$ $A$ is an $R$-submodule of $K$ or $K/R$ where $K$ is the field of fractions of $R$, if $R$ is a discrete valuation ring; or $(ii)$ $A$ is a submodule of the Prüfer $P$-module $R(P^{\infty})$, for some non-zero prime ideal $P$, if $R$ is otherwise.
	\end{proposition}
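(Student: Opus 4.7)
The plan is to establish both directions of this characterization, with the easy direction coming first. I would verify that the claimed modules are hollow by showing their submodule lattices are totally ordered: when $R$ is a DVR with uniformizer $\pi$, every nonzero fractional ideal is of the form $\pi^{n}R$ for some $n\in\mathbb{Z}$, so the $R$-submodule lattice of $K=\bigcup_{n\geq 0}\pi^{-n}R$ (and consequently of $K/R$) is a chain. Likewise, for any Dedekind domain $R$ and nonzero prime $P$, the submodule lattice of $R(P^{\infty})=\bigcup_{n\geq 0}P^{-n}/R$ is the chain $\{P^{-n}/R\}_{n\geq 0}$. A submodule of any module whose submodule lattice is totally ordered is automatically hollow, since the sum of two proper submodules collapses to the larger one, still proper.

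For the converse, suppose $A$ is a nonzero hollow $R$-module over a Dedekind domain $R$. I would argue in two stages. Stage one reduces to a single-prime support. Torsion modules over a Dedekind domain split canonically as $A=\bigoplus_{P}A_{P}$ into their $P$-primary components; hollowness forces at most one summand $A_{P}$ to be nonzero, otherwise the decomposition exhibits $A$ as a sum of two proper submodules. For torsion-free $A$, an analogous argument using localization at distinct maximal ideals and the classification of rank-one torsion-free modules (as fractional ideals, i.e. invertible modules up to isomorphism class in $\mathrm{Cl}(R)$) shows that torsion-free hollow modules cannot exist when $R$ has more than one maximal ideal; this forces the non-DVR case to be purely torsion, and even in the DVR case forces rank at most one.

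Stage two exploits the injective-envelope structure peculiar to Dedekind domains. The injective envelope of the simple module $R/P$ is precisely $R(P^{\infty})$, and any $P$-primary torsion module embeds into a direct sum of copies of $R(P^{\infty})$; hollowness again allows only one copy, yielding $A\hookrightarrow R(P^{\infty})$ in the general Dedekind case, and $A\hookrightarrow K/R$ in the DVR case (where $R(P^{\infty})\cong K/R$). For the DVR torsion-free case, the rank-one conclusion from Stage one combined with embedding into $A\otimes_{R}K\cong K$ gives $A\hookrightarrow K$.

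The principal obstacle will be Stage one, since $A$ is not assumed finitely generated (and indeed $R(P^{\infty})$ itself is not); the standard primary decomposition arguments must be carried out using divisibility/injectivity of $K/R$ and $R(P^{\infty})$ over a Dedekind domain rather than the easier finitely generated structure theorem. Ruling out torsion-free hollow modules over a non-DVR Dedekind domain — equivalently, showing that two coprime nonzero prime ideals $P,Q\subset R$ always allow one to split any torsion-free $A$ nontrivially as $PA+QA$ with each summand proper — is the technical heart of the argument, and is presumably where Rangaswamy's proof spends its effort.
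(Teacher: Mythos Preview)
The paper does not prove this proposition: it is quoted as Corollary~2.4 of \cite{rangaswamy} and invoked only as a black box, to illustrate that certain direct sums over Dedekind domains have finite dual Goldie dimension. There is therefore no proof in the paper to compare your proposal against.

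That said, your sketch has a real gap at precisely the point you yourself flag as the technical heart. The decomposition $A = PA + QA$ with both summands proper cannot work in general: for any divisible torsion-free $A$ (in particular $A = K$ itself) one has $PA = QA = A$, so neither summand is proper and nothing is achieved. Ruling out torsion-free hollow modules over a non-DVR Dedekind domain requires a genuinely different pair of proper submodules summing to $A$; over $\mathbb{Z}$, for instance, one uses the partial-fractions identity $\mathbb{Q} = \mathbb{Z}_{(p)} + \mathbb{Z}[1/p]$, and the general Dedekind case needs an analogue of this localization-based decomposition rather than the ideal-product trick you propose. Your easy direction (totally ordered lattices give hollow modules) and your torsion case via primary decomposition and embedding into the injective envelope $R(P^{\infty})$ are sound.
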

	
	See \cite[Chapter 5]{insights} for a description of the submodules of the Prüfer $P$-module, which are isomorphic to $R/P^n$ for some $n\in\mathbb{N}$. It is well-known that any $R$-submodule of $K$ for a discrete valuation ring $R$ is of the form $R\cdot u^n$ for an integer $n$ and a fixed uniformizer $u$, and thus $R$-submodules of $K$ and $K/R$ include cyclic torsion $R$-modules.
	
	 Since the dual Goldie dimension is additive over direct sums, and hollow modules are modules with dual Goldie dimension $1$, Theorem~\ref{T5} holds for finite direct sums of hollow modules over Dedekind domains. A special case of this is Corollary $3.3$ of \cite{khare2020} since, besides finitely generated torsion modules over Dedekind domains, we show that Theorem~\ref{T5} also holds for modules of the form $\bigoplus_{i\in I} C^{(i)}$ where $C^{(i)}$ are (cyclic, but not necessarily torsion) $R$-submodules of $K$ or $K/R$ and $I$ is a finite set, and $R$ is a discrete valuation ring.\smallskip
	
	\item Theorem~\ref{T5} also holds for quasi-projective modules of finite dual Goldie dimension. Examples of these include finite direct sums of hollow quasi-projective modules. The author of \cite{rangaswamy} classifies hollow quasi-projective modules as those whose endomorphism rings are local. In particular, a projective module is hollow if and only if it is a local module (i.e. has a unique maximal submodule).
\end{enumerate}

The proof of Theorem~\ref{T5} requires the following basic characterization of the radical:

\begin{lemma}[Exercise $15.5$ in \cite{anderson}]\label{L5}
The Jacobson radical $\jac(M)$ of a module $M$ over a commutative ring $R$ is given by $\jac(M)=\bigcap_{\mathfrak{m}\in \mspec(R)}\mathfrak{m}M$.
\end{lemma}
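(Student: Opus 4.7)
The plan is to establish the displayed equality by the standard two-inclusion argument, using the correspondence (valid because $R$ is commutative and unital) between simple $R$-modules and residue fields $R/\mathfrak{m}$. The containment $\bigcap_{\mathfrak{m}\in\mspec(R)}\mathfrak{m}M\subseteq \jac(M)$ will come from pushing each maximal ideal down to a single maximal submodule, while the reverse inclusion will come from producing, for each element $x\notin\mathfrak{m}M$, a specific maximal submodule of $M$ that avoids $x$.

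First I would handle $\bigcap_{\mathfrak{m}}\mathfrak{m}M\subseteq \jac(M)$. Let $N\subsetneq M$ be an arbitrary maximal submodule, so that $M/N$ is a simple $R$-module. Since $R$ is commutative and unital, any simple $R$-module is isomorphic to $R/\mathfrak{m}$ for the maximal ideal $\mathfrak{m}:=\mathrm{Ann}_R(M/N)$; hence $\mathfrak{m}M\subseteq N$. Therefore $\bigcap_{\mathfrak{m}'\in\mspec(R)}\mathfrak{m}'M\subseteq \mathfrak{m}M\subseteq N$, and intersecting over all maximal submodules $N$ of $M$ gives the inclusion $\bigcap_{\mathfrak{m}'}\mathfrak{m}'M\subseteq \jac(M)$.

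For the reverse inclusion $\jac(M)\subseteq \bigcap_{\mathfrak{m}}\mathfrak{m}M$, fix $\mathfrak{m}\in\mspec(R)$; I must show $\jac(M)\subseteq \mathfrak{m}M$. The case $\mathfrak{m}M=M$ is trivial, so suppose $M/\mathfrak{m}M$ is a nonzero $R/\mathfrak{m}$-vector space, and take any $x\in M\setminus\mathfrak{m}M$, with nonzero image $\bar{x}\in M/\mathfrak{m}M$. Using Zorn's lemma, extend $\{\bar{x}\}$ to a basis of $M/\mathfrak{m}M$ and let $H$ be the hyperplane spanned by the remaining basis vectors; then $\bar{x}\notin H$. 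The preimage $N:=\pi^{-1}(H)$ under the canonical projection $\pi:M\twoheadrightarrow M/\mathfrak{m}M$ is a submodule of $M$ with $M/N\cong (M/\mathfrak{m}M)/H\cong R/\mathfrak{m}$, hence is maximal, and $x\notin N$. Thus $x\notin \jac(M)$, and contrapositively $\jac(M)\subseteq \mathfrak{m}M$.

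The only nontrivial ingredient is the appeal to Zorn's lemma to produce a hyperplane of $M/\mathfrak{m}M$ avoiding a prescribed nonzero vector; this is routine for vector spaces but is the one step where choice is genuinely used. Everything else reduces to the commutative-ring identification of simple modules as residue fields, so I expect no further obstacles.
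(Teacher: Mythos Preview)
Your proof is correct. The paper does not supply its own argument for this lemma; it simply records the statement and cites Exercise~15.5 in Anderson--Fuller, so there is nothing to compare against. Your two-inclusion approach via the identification of simple modules over a commutative unital ring with residue fields $R/\mathfrak{m}$ is the standard one, and both directions are handled cleanly.
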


 We also require the following result on modules with small Jacobson radical.
 
 \begin{theorem}[Theorem $1.13$ of \cite{dgdii}]\label{T3}
	Let $M$ be any module whose Jacobson radical $\jac(M)$ is small in $M$ and let $\hdim(M)$ denote the dual Goldie dimension of $M$. Then $\hdim(M)<\infty$ if and only if $M/\jac(M)$ is semi-simple Artinian, i.e., a finite length semi-simple module. Moreover, in this case $\hdim(M)$ equals the length of $M/\jac(M)$.  
\end{theorem}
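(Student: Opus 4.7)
The plan is to reduce the equivalence to a structural statement about modules with zero Jacobson radical, using three standard properties of the dual Goldie dimension $\hdim$ from the foundational paper \cite{dgdi}: (P1) if $N$ is a small submodule of $M$, then $\hdim(M)=\hdim(M/N)$; (P2) $\hdim$ is additive over finite direct sums; and (P3) a simple module is hollow, so $\hdim(S)=1$.

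The reverse direction is quick. Suppose $M/\jac(M)$ is semi-simple Artinian, say $M/\jac(M)\cong\bigoplus_{i=1}^{n}S_i$ with each $S_i$ simple. Properties (P2) and (P3) give $\hdim(M/\jac(M))=n=\len(M/\jac(M))$. Since $\jac(M)$ is small in $M$ by hypothesis, (P1) yields $\hdim(M)=\hdim(M/\jac(M))=n<\infty$, which simultaneously proves finiteness and the desired length equality.

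For the forward direction, assume $\hdim(M)<\infty$. Setting $\overline{M}:=M/\jac(M)$, property (P1) gives $\hdim(\overline{M})=\hdim(M)<\infty$; moreover $\jac(\overline{M})=0$, since the maximal submodules of $\overline{M}$ correspond bijectively to the maximal submodules of $M$ containing $\jac(M)$, and their intersection is $\jac(M)/\jac(M)$. So it suffices to prove the following lemma: any module $N$ with $\jac(N)=0$ and $\hdim(N)<\infty$ is semi-simple of finite length $\hdim(N)$. To prove this, I would use the characterization of hollow dimension via \emph{coindependent families}: proper submodules $N_1,\dots,N_k$ of $N$ are coindependent if $N_i+\bigcap_{j\neq i}N_j=N$ for every $i$, and $\hdim(N)$ equals the supremum of sizes of such families. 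The finite bound on hollow dimension forces the existence of a maximal coindependent family of \emph{maximal} submodules $N_1,\dots,N_k$ with $k\leq\hdim(N)$. I would then argue $\bigcap_{i=1}^{k}N_i=0$: maximality of the family, together with $\jac(N)=0$, forces every maximal submodule of $N$ already to contain this intersection, so the intersection equals $\jac(N)$, which is zero. Coindependence together with this triviality of the intersection yields, via an iterated Chinese-Remainder-style argument, $N\cong\bigoplus_{i=1}^{k}N/N_i$, a semi-simple module of length $k$. Finally (P2) and (P3) give $\hdim(N)=k=\len(N)$, completing the proof.

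The main obstacle will be the structural lemma in the forward direction, specifically the two linked assertions that (i) a maximal coindependent family of maximal submodules must intersect to $\jac(N)$, and (ii) coindependence of maximal submodules with trivial intersection yields a direct sum decomposition by Chinese Remainder. Both are precisely where the finite hollow-dimension hypothesis is indispensable: without it, a maximal coindependent family need not exist at all, and the finite direct-sum decomposition collapses. The properties (P1)--(P3), while standard, also warrant a brief verification; in particular (P1) is the content that dual Goldie dimension is insensitive to small submodules, which ensures that both implications pass cleanly between $M$ and $M/\jac(M)$.
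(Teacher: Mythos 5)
The paper does not actually prove this statement; it is quoted as Theorem~$1.13$ of \cite{dgdii} and used as a black box, so there is no internal proof to compare against. Your argument is a correct independent proof, modulo the standard inputs you isolate. The reverse direction and the reduction to $\overline{M}=M/\jac(M)$ with $\jac(\overline{M})=0$ are exactly right. In the forward direction, the step you flag as the main obstacle --- that a maximal coindependent family of maximal submodules $N_1,\dots,N_k$ has $\bigcap_i N_i\subseteq\jac(N)$ --- does go through, but it rests on an extension lemma you should state explicitly: if $\{N_1,\dots,N_k\}$ is coindependent and $L$ is a submodule with $L+\bigcap_{i}N_i=N$, then $\{N_1,\dots,N_k,L\}$ is again coindependent. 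This is a short computation: writing $K_i=\bigcap_{j\neq i}N_j$, given $n=a+b$ with $a\in N_i$, $b\in K_i$, decompose $b=l+d$ with $l\in L$ and $d\in N_i\cap K_i$; then $l=b-d\in L\cap K_i$ and $n=(a+d)+l\in N_i+(L\cap K_i)$. With this lemma, any maximal submodule $L$ not containing $K=\bigcap_i N_i$ satisfies $L+K=N$ and would enlarge the family, so $K\subseteq\jac(N)=0$; the surjectivity of $N\to\bigoplus_i N/N_i$ follows similarly from $N_i+\bigcap_{j\neq i}N_j=N$, and injectivity from $K=0$. The only other caveat is that you take as given the characterization of $\hdim(N)$ as the supremum of sizes of coindependent families of proper submodules; depending on which definition of dual Goldie dimension one adopts, this is itself a theorem (it appears in \cite{dgdi} and in \cite{lomp}), and since it carries real weight in your argument it should be cited precisely rather than treated as folklore.
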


With the above preliminaries, we now show:

\begin{proof}[Proof of Theorem~\ref{T5}]
	We will first show that $M$ is cyclic if $S_M=\emptyset$. By Lemma~\ref{L5}, the radical $\jac(M)\subset \mathfrak{m}M$ for all $\mathfrak{m}\in \mspec(R)$. Hence $\mathfrak{m}(M/\jac(M))= (\mathfrak{m}M+\jac(M))/(\jac(M))= \mathfrak{m}M/\jac(M)$. Thus, $M'/\mathfrak{m}M'\cong M/\mathfrak{m}M$ where $M'=M/\jac(M)$. Since $M$ has finite dual Goldie dimension, it follows from Theorem~\ref{T3} that $M'$ is a finite length semi-simple module. Hence, $M'/\mathfrak{m}M'\cong R/\mathfrak{m}$ for all $\mathfrak{m}\in \mspec(R)$, such that $\mathfrak{m}M'\subsetneq M'$, implies $M'$ is cyclic by Proposition~\ref{P12}. This implies $M$ is cyclic as well. To see this, assume $M$ is not cyclic. Then $M$ is a union of its maximal proper submodules, say $M=\bigcup_{i\in I} M_i$. Then $M'=M/\jac(M)=\bigcup_{i\in I} M_i/\jac(M)$. But since $\jac(M)\subset M_i$ for all $i\in I$, it follows that $M_i/\jac(M)$ is a proper submodule of $M/\jac(M)$. This would imply that $M'$, a cyclic module, is a union of its proper submodules, which is false. Thus $M$ is cyclic.
	
	Since $M$ has a small Jacobson radical $\jac(M)$, it follows that  if $M=\bigcup_{i\in I}M_i$ is a union of $|I|$-many proper submodules, then $M/\jac(M)=\bigcup_{i\in I}(M_i+\jac(M))/\jac(M)$ is also a union of at most $|I|$-many proper submodules. Conversely, if  $M/\jac(M)$ is a union of $|I'|$-many proper submodules, then its covering lifts to a covering of $M$ by at most $|I'|$-many proper submodules along the natural quotient map $q:M\rightarrow M/\jac(M)$. Hence, it follows that $\sigma(M, R)=\sigma(M/\jac(M), R)$. Since $M/\mathfrak{m}M\cong M'/\mathfrak{m}M'$ (where $M'=M/\jac(M)$), we see that $\lbrace \mathfrak{m}\in \mspec(R):$ $\dim_{R/\mathfrak{m}}(M/\mathfrak{m}M)\geq 2\rbrace=S_M= \lbrace \mathfrak{m}\in \mspec(R):$ $\dim_{R/\mathfrak{m}}(M'/\mathfrak{m}M')\geq 2\rbrace$. Hence, $\sigma(M, R)=\sigma(M/\jac(M), R)= \min_{\mathfrak{m}\in S_M}|R/\mathfrak{m}|+1$, where the second equality follows from Proposition~\ref{P12}. 
\end{proof}

Given Theorem \ref{T5} and the subsequent examples, we see that several previously studied families of modules -- for which the equality $\sigma(M,R) = \min_{\mathfrak{m}\in S_M}|R/\mathfrak{m}|+1 $ holds -- are indeed finitely generated with finite dual Goldie dimension. However, it is easily seen that this does not classify all modules for which $\sigma(M, R)=\min_{\mathfrak{m}\in S_M}|R/\mathfrak{m}|+1$. For example, \cite{rao} proves that this equality also holds for any Abelian group admitting a finite covering. Clearly, not all Abelian groups admitting a finite covering have finite dual Goldie dimension. For example, any finitely generated Abelian group $G$ with rank at least $2$ admits a finite covering, but $G$ does not have finite dual Goldie dimension, by the classification we saw above. Thus, the classification question (i.e. Question~\ref{Q2}) remains open. In the next section, we try to understand this problem topologically, akin to our analysis for vector spaces, and find topological criteria for the equality in Question~\ref{Q2} to hold.

\section{A topological perspective for modules}\label{S5}

All the notations of Section~\ref{S4} will be carried over in this section. In Section~\ref{S3.4} we saw a topological perspective of the covering problem for vector spaces. In this section, we will consider a similar topological interpretation of the corresponding problem for certain classes of modules. We will be interested in the classes of modules $M$ discussed in the previous section, for which one has the equality $\sigma(M, R)=\min_{\mathfrak{m}\in S_M}|R/\mathfrak{m}|+1$. This equality, along with the results of vector spaces, motivates one to appeal to the naturally available $R/\mathfrak{m}$-vector spaces $M/\mathfrak{m}M$ for the desired topology. Like before, $\sigma(M, R)$ will denote the minimum (cardinal) number of proper submodules of $M$, whose union covers the whole module $M$. Analogous to the case of vector spaces over infinite fields, we will define a topology on certain classes of finitely generated modules $M$, which we shall call the \textit{induced Zariski topology} on $M$, for reasons that will be evident later.

\begin{definition}
Given a finitely generated $R$-module $M$ over a ring $R$, equipped with the induced Zariski topology (see the next paragraph and Definition~\ref{izt}), the minimum (cardinal) number of closed subsets of $M$ whose union covers the whole space $M$ will be called the \textit{Zariski covering number} of $M$ and denoted by $\sigma_{\tau}(M, R)$.
\end{definition}

\subsection{The topology for finitely generated modules with finite dual Goldie dimension}

We first define the induced Zariski topology and compute the Zariski covering number in a special case. Let $(R, \mathfrak{m})$ be a local ring with an infinite residue field and let $M$ be any finitely generated $R$-module, admitting a cover by proper submodules. Then $M/\mathfrak{m}M$ is a finite-dimensional $R/\mathfrak{m}$-vector space. Equip $M/\mathfrak{m}M$ with the canonical induced Zariski topology, by considering it as an $R/\mathfrak{m}$-vector space. Now equip $M$ with the coarsest topology to make the natural $R$-linear map $\pi: M\rightarrow M/\mathfrak{m}M$ continuous: in particular, define the open sets of $M$ to be $\pi^{-1}(\mathcal{O})$, where $\mathcal{O}\subseteq M/\mathfrak{m}M$ are the open subsets of the induced Zariski topology on $M/\mathfrak{m}M$ as an $R/\mathfrak{m}$-vector space. This is the definition of the induced Zariski topology on a finitely generated module $M$ over a local ring $(R, \mathfrak{m})$. With this topology on $M$, we have the following result.

\begin{proposition}\label{P13}
	Let $(R, \mathfrak{m})$ be a local ring with infinite residue field $R/\mathfrak{m}$, and $M$ be a finitely generated $R$-module admitting a covering by proper submodules. Consider $M$ as a topological space with the induced Zariski topology. Then $\sigma_{\tau}(M, R)=|R/\mathfrak{m}|+1$. In particular, $\sigma(M, R)=|R/\mathfrak{m}|+1$.
\end{proposition}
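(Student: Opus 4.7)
The plan is to reduce the problem on $M$ directly to the vector space case on $M/\mathfrak{m}M$, which has already been settled in Theorem~\ref{T2}. First I will observe that since $M$ is finitely generated over the local ring $(R,\mathfrak{m})$ and admits a covering by proper submodules, $M$ is not cyclic; Nakayama's lemma then forces $n := \dim_{R/\mathfrak{m}}(M/\mathfrak{m}M) \geq 2$, so $\mathfrak{m} \in S_M$ and in particular the residue-field vector space $M/\mathfrak{m}M$ is a finite-dimensional space over the infinite field $R/\mathfrak{m}$ to which the results of Section~\ref{S3.4} apply.

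Next I will handle the upper bound $\sigma_\tau(M,R) \leq |R/\mathfrak{m}|+1$. Since every proper submodule of the finitely generated $M$ is contained in a maximal one, and since $R$ is local the simple quotient $M/\tilde{N}$ must be isomorphic to $R/\mathfrak{m}$, giving $\mathfrak{m}M \subseteq \tilde{N}$; hence every maximal submodule is of the form $\pi^{-1}(H)$ for a hyperplane $H \subset M/\mathfrak{m}M$ and is therefore closed in the induced Zariski topology. Combining this with the existence of $|R/\mathfrak{m}|+1$ hyperplanes in $M/\mathfrak{m}M$ whose union is all of $M/\mathfrak{m}M$ (Theorem~\ref{T1}), their preimages under $\pi$ yield a cover of $M$ by $|R/\mathfrak{m}|+1$ proper closed submodules. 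This simultaneously gives the standard bound $\sigma_\tau(M,R) \leq \sigma(M,R) \leq |R/\mathfrak{m}|+1$.

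For the lower bound, suppose $M = \bigcup_{i \in I} C_i$ is a covering by proper closed subsets. By the definition of the induced Zariski topology, each $C_i = \pi^{-1}(D_i)$ for some closed $D_i \subset M/\mathfrak{m}M$; surjectivity of $\pi$ forces each $D_i$ to be proper and forces $\{D_i\}_{i \in I}$ to cover $M/\mathfrak{m}M$. Applying Theorem~\ref{T2} to the affine space $M/\mathfrak{m}M \cong (R/\mathfrak{m})^n$ (with $n \geq 2$) over the infinite field $R/\mathfrak{m}$ yields $|I| \geq |R/\mathfrak{m}|+1$, hence $\sigma_\tau(M,R) \geq |R/\mathfrak{m}|+1$. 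Together with the upper bound this gives the equality $\sigma_\tau(M,R)=|R/\mathfrak{m}|+1$, and the trivial inequality $\sigma_\tau(M,R) \leq \sigma(M,R) \leq |R/\mathfrak{m}|+1$ then forces the claimed equality for $\sigma(M,R)$ as well. No step presents a substantial obstacle, since the induced Zariski topology was engineered so that the lattice of closed sets on $M$ is in order-preserving bijection with that on $M/\mathfrak{m}M$; the only nontrivial input is the vector space Theorem~\ref{T2}.
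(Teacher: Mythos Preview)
Your proposal is correct and follows essentially the same approach as the paper's own proof: both arguments observe that $\dim_{R/\mathfrak{m}}(M/\mathfrak{m}M)\geq 2$, use that the proper closed subsets of $M$ are precisely the $\pi^{-1}(\mathcal{C})$ for proper closed $\mathcal{C}\subset M/\mathfrak{m}M$, push a covering forward along the surjection $\pi$ to invoke Theorem~\ref{T2}, and lift a minimal covering of $M/\mathfrak{m}M$ back to $M$. Your version is slightly more explicit (invoking Nakayama's lemma and describing maximal submodules as preimages of hyperplanes), but there is no substantive difference in strategy.
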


\begin{proof}
	Since $M$ admits a covering by proper submodules, it follows from the analysis of Section~\ref{S4} that $\dim_{R/\mathfrak{m}}(M/\mathfrak{m}M)\geq 2$. The proper closed sets of $M$ are precisely $\pi^{-1}(\mathcal{C})$ for a proper closed subset $\mathcal{C}$ (with respect to the induced Zariski topology) of $M/\mathfrak{m}M$. Let $\lbrace \pi^{-1}(\mathcal{C}_i)\rbrace_{i\in I} $ be a collection of proper closed subsets of $M$ that cover $M$. Under the surjective map $\pi$, we see that $M/\mathfrak{m}M=\pi(M)=\pi(\bigcup_{i\in I} \pi^{-1}(\mathcal{C}_i))=\bigcup_{i\in I} \pi(\pi^{-1}(\mathcal{C}_i))=\bigcup_{i\in I}\mathcal{C}_i$. Thus, the collection of proper closed sets $\lbrace \mathcal{C}_i\rbrace_{i\in I} $ forms a cover for the $R/\mathfrak{m}$-vector space $M/\mathfrak{m}M$. From Theorem~\ref{T2}, it follows that then $\mathfrak{I}=|I|\geq |R/\mathfrak{m}|+1$. In particular, $M/\mathfrak{m}M$ has a minimal cover of size $|R/\mathfrak{m}|+1$ by results of Section~\ref{S3}, and lifting this to $M$, we see that $\sigma_{\tau}(M, R)=|R/\mathfrak{m}|+1=\sigma(M, R)$.
\end{proof}

We will now generalize the above topology to the class of finitely generated modules $M$ with finite dual Goldie dimension, admitting a covering by proper submodules over a commutative ring $R$, \textbf{whose residue fields $R/\mathfrak{m}$ have infinite cardinalities for all $\mathfrak{m}\in S_M$}. We saw earlier that such a module $M$ admits a covering by proper submodules if and only if $S_M$ is non-empty, where $S_M$ is the set of maximal ideals $\mathfrak{m}$ of $R$ such that $\dim_{R/\mathfrak{m}}(M/\mathfrak{m}M)\geq 2$.

\begin{definition}[Topologizing $M/\mathfrak{m}M$]\label{factordef}
    Let $M$ be a finitely generated $R$-module such that for all $\mathfrak{m}\in S_M$, $R/\mathfrak{m}$ is infinite. We define the \textit{factor spaces of $M$} to be the quotient modules $M/\mathfrak{m}M$ (for all $\mathfrak{m}\in\mspec(R)$), topologized as follows:
    \begin{enumerate}
	\item For $\mathfrak{m}\in S_M\subseteq \mspec(R)$, we view the $R/\mathfrak{m}$-vector space $M/\mathfrak{m}M$ of dimension at least $2$, as a topological space with the induced Zariski topology as described in Section~\ref{S3.4}. 
	\item For $\mathfrak{m}\in \mspec(R)\setminus S_M$, we view the (at most $1$-dimensional) $R/\mathfrak{m}$-vector space $M/\mathfrak{m}M$ as a topological space, with the only closed sets being $\emptyset, \lbrace 0\rbrace$ and $M/\mathfrak{m}M$. We will call this the \textit{discrete subspace topology}. 
\end{enumerate}
\end{definition}

It is easy to see that all the non-empty open sets in the factor spaces of $M$ are dense in the corresponding factor space. For each $\mathfrak{m}\in \mspec(R)$, we have an associated $R/\mathfrak{m}$ vector space $M/\mathfrak{m}M$ along with the quotient map $q_{\mathfrak{m}}: M\rightarrow M/\mathfrak{m}M$. We patch these vector spaces up to form the $R$-module $\prod_{\mathfrak{m}\in \mspec(R)}M/\mathfrak{m}M$, with the product topology, where the terms $M/\mathfrak{m}M$ in the product are viewed as factor spaces as defined above. For our convenience, we will always ignore the trivial components (i.e. $\mathfrak{m}\in\mspec(R)$ such that $\mathfrak{m}M=M$) in $\prod_{\mathfrak{m}\in \mspec(R)}M/\mathfrak{m}M$. We have a natural associated map $q:M\rightarrow \prod_{\mathfrak{m}\in \mspec(R)}M/\mathfrak{m}M$, given by $q(x)=(q_{\mathfrak{m}}(x))_{\mathfrak{m}\in \mspec(R)}$. The image of $q$ is isomorphic to $M/\jac(M)$ by Lemma~\ref{L5}. 

Now we focus on finitely generated modules $M$ with finite dual Goldie dimension, i.e., such that $M/\jac(M)$ is a finite length semi-simple module. In other words, $M/\jac(M)\cong \bigoplus_{i=1}^{n}(R/\mathfrak{m}_i)$ for some $\mathfrak{m}_i\in \mspec(R)$, where the isomorphism is given by an $R$-module homomorphism $\Phi$. Without loss of generality, let the distinct maximal ideals among $\lbrace\mathfrak{m}_i\rbrace_{i=1}^{n}$ be $\mathfrak{m}_1, \dots, \mathfrak{m}_r$ for some $r\leq n$. Then $\jac(M)\subseteq \mathfrak{m}M$ for all $\mathfrak{m}\in \mspec(R)$ by Lemma~\ref{L5}, so: $$\mathfrak{m}M/\jac(M)= \mathfrak{m}(M/\jac(M))\cong \mathfrak{m}(\bigoplus_{i=1}^{n}(R/\mathfrak{m}_i))= \bigoplus_{i=1}^{n}\mathfrak{m}(R/\mathfrak{m}_i), \qquad \forall \mathfrak{m}\in\mspec(R),$$ where the isomorphism in the middle is the restriction of $\Phi$ to the submodule $\mathfrak{m}M/\jac(M)$ of $M/\jac(M)$. Now since $\mathfrak{m}(R/\mathfrak{m}_i)$ equals $0$ if $\mathfrak{m}=\mathfrak{m}_i$ and $R/\mathfrak{m}_i$ otherwise, it follows that:
$$M/\mathfrak{m}M\cong \frac{M/\jac(M)}{\mathfrak{m}M/\jac(M)}\cong \frac{\bigoplus_{i=1}^{n}(R/\mathfrak{m}_i)}{\bigoplus_{i=1}^{n}\mathfrak{m}(R/\mathfrak{m}_i)}\cong\bigoplus_{i=1}^{n}\frac{(R/\mathfrak{m}_i)}{\mathfrak{m}(R/\mathfrak{m}_i)} \cong (R/\mathfrak{m})^{k_\mathfrak{m}}$$
 where $k_\mathfrak{m}$ is the number of copies of $R/\mathfrak{m}$ in the semi-simple decomposition of $M/\jac(M)$. Thus, $M/\mathfrak{m}M\neq 0$ only for $\mathfrak{m}=\mathfrak{m}_i$, where $i=1, \dots, r$. Hence, for a finitely generated module $M$ with finite dual Goldie dimension, the product $\prod_{\mathfrak{m}\in \mspec(R)}M/\mathfrak{m}M$ is finite, and equals $\prod_{i=1}^{r}M/\mathfrak{m}_iM$, where $\mathfrak{m}_i$ for $i=1, \dots, r$ are the distinct maximal ideals occurring in the semi-simple decomposition of $M/\jac(M)$. Conversely, if $\mspec(R)$ is finite, then $M/\jac(M) \cong \prod_{\mathfrak{m} \in \mspec(R)} M/\mathfrak{m} M = \bigoplus_{\mathfrak{m} \in \mspec(R)} M/\mathfrak{m} M$
by the Chinese remainder theorem. It follows that $M/\jac(M)$ is isomorphic to a finite direct sum of simple modules. \textbf{Thus, $\prod_{\mathfrak{m}\in \mspec(R)}M/\mathfrak{m}M$ is a finite product if and only if $M$ has finite dual Goldie dimension}.

We now add the following hypothesis: $M$ is a finitely generated $R$-module with finite dual Goldie dimension, such that \textbf{each simple factor, with multiplicity at least $2$, in the semi-simple decomposition of $M/\jac(M)$ is of infinite cardinality}. In terms of the above setup, the added hypothesis requires $R/\mathfrak{m}_i$ to be infinite fields for those $i$, such that $k_{\mathfrak{m}_i}\geq 2$. 

\begin{definition}[Induced Zariski topology on $M$]\label{izt}
    With setup as above, topologize each $M/\mathfrak{m}_i M$ according to Definition~\ref{factordef}, and equip $\prod_{i=1}^{r}M/\mathfrak{m}_i M$ with the product topology. The isomorphism $M/\jac(M)\cong \prod_{i=1}^{r}M/\mathfrak{m}_i M$ induces a topology on $M/\jac(M)$, which would make the first isomorphism theorem map into a homeomorphism. Finally, we equip $M$ with the coarsest topology that makes the quotient map $q: M\rightarrow M/\jac(M)$ continuous. This is the definition of the \textit{induced Zariski topology} on a finitely generated module $M$ with finite dual Goldie dimension.
\end{definition}

\textbf{Explicit description of the induced Zariski topology on $M$:} Consider the map $q: M\rightarrow \prod_{i=1}^{r}M/\mathfrak{m}_i M $ with the natural quotient maps $q_i: M\rightarrow M/\mathfrak{m}_iM$ being the component maps of $q$. Let $\prod_{i=1}^{r}\mathcal{O}_i$ be a basic open subset of the product space, equipped with the product topology as described earlier. Then the induced Zariski topology on $M$ can be described as the one whose basic open sets are $q^{-1}(\prod_{i=1}^{r}\mathcal{O}_i)=\cap_{i=1}^{r}q_i^{-1}(\mathcal{O}_i)$, where $\mathcal{O}_i\subseteq M/\mathfrak{m}_iM$ are open, $\forall i=1, ,2\dots, r$.  

\begin{remark}\label{R10}
	As we saw in Section~\ref{S3.4}, the topology on $M/\mathfrak{m}M$ (as induced by an isomorphism with an affine space $(R/\mathfrak{m})^k$ for some $k>1$) is independent of the basis chosen for the isomorphism, thereby making the topology canonical. Hence, the topology on $M$ is canonical as well.
\end{remark}

We now come to one of the main results of this paper.

\begin{theorem}\label{T6}
	Let $M$ be a finitely generated non-cyclic $R$-module with finite dual Goldie dimension, such that the residue fields $R/\mathfrak{m}$ are infinite for all maximal ideals $\mathfrak{m}\in S_M$. Equip $M$ with the induced Zariski topology as defined above. Then $\sigma_{\tau}(M, R)=\min_{\mathfrak{m}\in S_M}|R/\mathfrak{m}|+1$. Consequently, $(a)$ the induced Zariski topology on $M$ makes it into a $\kappa_M$-maximal Baire space, for $\kappa_M=\min_{\mathfrak{m}\in S_M}|R/\mathfrak{m}|+1 $; and $(b)$ the covering number of $M$ is $\sigma(M, R)=\kappa_M$.
\end{theorem}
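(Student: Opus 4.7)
The plan is to establish the nontrivial inequality $\sigma_\tau(M, R) \geq \kappa_M$, where $\kappa_M := \min_{\mathfrak{m} \in S_M} |R/\mathfrak{m}| + 1$, since the chain $\sigma_\tau(M,R) \leq \sigma(M,R) \leq \kappa_M$ (the first inequality because every proper submodule of $M$ lies in a maximal submodule, which is the $q$-preimage of a proper subspace of some $M/\mathfrak{m}M$ and is therefore closed; the second by applying Theorem~\ref{T1} to each factor $M/\mathfrak{m}M$ with $\mathfrak{m}\in S_M$ and lifting via $q$) will then yield both the main equality of the theorem and consequence~(b). First I would reduce from $M$ to $M/\jac(M)$: the induced Zariski topology on $M$ is by construction the coarsest one making the surjection $q \colon M \to M/\jac(M)$ continuous, so $C \mapsto q^{-1}(C)$ is a cover-preserving bijection between proper closed subsets of $M/\jac(M)$ and of $M$, giving $\sigma_\tau(M,R) = \sigma_\tau(M/\jac(M), R)$. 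The finite dual Goldie dimension hypothesis guarantees $M/\jac(M) \cong \prod_{i=1}^{r} X_i$ (a \emph{finite} product of the factor spaces $X_i := M/\mathfrak{m}_i M$, each with its assigned topology from Definition~\ref{factordef}), so the goal reduces to proving $\sigma_\tau(\prod_{i=1}^r X_i) \geq \kappa_M$.

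The heart of the proof is an iterated slice reduction. Suppose for contradiction $\prod_{i=1}^r X_i = \bigcup_{\alpha \in A} C_\alpha$ with each $C_\alpha$ proper closed and $|A| < \kappa_M$. Fix any index $j$, put $Y' := \prod_{i \neq j} X_i$, and for each $y \in Y'$ consider the slice $S_y := \{y\} \times X_j$, whose subspace topology from the product is canonically the topology on $X_j$. Every slice must be contained in a single $C_\alpha$: if $\mathfrak{m}_j \in S_M$, Theorem~\ref{T2} gives $\sigma_\tau(X_j) = |R/\mathfrak{m}_j|+1 \geq \kappa_M > |A|$, while if $\mathfrak{m}_j \notin S_M$ the discrete subspace topology on $X_j$ has only $\emptyset$ and $\{0\}$ as proper closed subsets, so $X_j$ admits no cover by proper closed subsets at all. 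In either case some $C_\alpha$ already contains $S_y$. Define $A_\alpha := \{y \in Y' : S_y \subseteq C_\alpha\} = \bigcap_{x \in X_j} \{y \in Y' : (y,x) \in C_\alpha\}$; each set in the intersection is closed as a preimage of $C_\alpha$ under the continuous section $y \mapsto (y,x)$, so $A_\alpha$ is closed in $Y'$, and it is proper because $A_\alpha = Y'$ would force $C_\alpha = \prod_i X_i$. Thus $Y' = \bigcup_\alpha A_\alpha$ is a cover of $Y'$ by at most $|A|$ proper closed subsets. Iterating this reduction, removing one factor at each step, after $r$ iterations we reach the empty product -- a one-point space $\{\ast\}$ -- written as a union of proper closed subsets; but its only proper closed subset is $\emptyset$, which does not cover $\{\ast\}$, contradiction.

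For consequence~(a), each $X_i$ is irreducible (by Lemma~\ref{L3} when $\mathfrak{m}_i \in S_M$, since $X_i$ is then an affine space over an infinite field; and by direct inspection when $\mathfrak{m}_i \notin S_M$, since the two proper closed subsets $\emptyset$ and $\{0\}$ of $X_i$ do not cover it), whence the finite product $M/\jac(M)$ and hence $M$ itself (via the pullback topology under the surjection $q$) are irreducible. In an irreducible space, proper closed subsets coincide with closed nowhere dense subsets, so the equality $\sigma_\tau(M,R) = \kappa_M$ translates directly into the statement that $M$ is $\kappa_M$-maximal Baire, via the characterization of $\kappa$-Baire as not being of first $\kappa$-category. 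Part~(b) is then immediate from $\kappa_M = \sigma_\tau(M,R) \leq \sigma(M,R) \leq \kappa_M$. The main obstacle is the iterated slice step, where one must verify that the reduction works for arbitrary -- not merely product-form -- closed subsets of the product, and must handle the non-$S_M$ factors uniformly with the $S_M$ factors by exploiting the stronger observation that their degenerate discrete subspace topology admits no cover by proper closed subsets at all.
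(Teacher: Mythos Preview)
Your proof is correct, but it follows a genuinely different route from the paper's. The paper argues directly at the level of basic open sets: given a family $\{\mathcal{U}_i\}_{i\in I}$ of nonempty basic open sets of $M$ with $\mathcal{U}_i = q^{-1}\bigl(\prod_{j=1}^r \mathcal{O}_{ji}\bigr)$ and $|I| < \kappa_M$, it computes $\bigcap_i \mathcal{U}_i = q^{-1}\bigl(\prod_{j=1}^r \bigcap_i \mathcal{O}_{ji}\bigr)$ and invokes Theorem~\ref{T2} on each factor $M/\mathfrak{m}_j M$ with $\mathfrak{m}_j \in S_M$ (together with the observation that discrete-subspace-topology factors have all nonempty open sets intersecting) to conclude each $\bigcap_i \mathcal{O}_{ji}$ is nonempty; surjectivity of $q$ then gives $\bigcap_i \mathcal{U}_i \neq \emptyset$. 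Since every nonempty open set contains a basic one, this suffices for $\sigma_\tau(M,R) \geq \kappa_M$. Your iterated slice reduction instead peels off one factor at a time, showing that any cover of $\prod_{i=1}^r X_i$ by fewer than $\kappa_M$ proper closed sets descends to such a cover of $\prod_{i\neq j} X_i$, eventually reaching a one-point space. The paper's approach is shorter and exploits the product structure in a single stroke, treating Theorem~\ref{T2} as a black box; your slicing argument is closer in spirit to the proof of Theorem~\ref{T2} itself (which also slices by hyperplanes), handles arbitrary closed sets directly without reducing to basic open sets, and makes the role of the non-$S_M$ factors more transparent. For part~(a) the two arguments reconverge: the paper also establishes that every open set in $M$ is dense and deduces the $\kappa_M$-maximal Baire property from the covering-number equality, just as you do via irreducibility.
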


\begin{proof}
	We will prove that $M$ with the induced Zariski topology is a $\kappa_M$-Baire space for $\kappa_M= \min_{\mathfrak{m}\in S_M}|R/\mathfrak{m}|+1$. Let $M/\jac(M)\cong \bigoplus_{i=1}^{n} R/\mathfrak{m}_i$ and without loss of generality let the distinct maximal ideals among the $\mathfrak{m}_i$ be $\mathfrak{m}_1, \dots, \mathfrak{m}_r$. Then $M/\jac(M)\cong \prod_{i=1}^{r}M/\mathfrak{m}_iM$. We have the natural surjective map $q:M\rightarrow \prod_{i=1}^{r}M/\mathfrak{m}_iM$ inducing the topology on $M$ from the product topology of the product. Let $\lbrace \mathcal{U}_i\rbrace_{i\in I}$ be a family of basic open sets in $M$ where $\mathcal{U}_i=q^{-1}(\prod_{j=1}^{r}\mathcal{O}_{ji})$. Then
	$$\bigcap_{i\in I}\mathcal{U}_i=\bigcap_{i\in I}q^{-1}(\prod_{j=1}^{r}\mathcal{O}_{ji})=q^{-1}(\bigcap_{i\in I}(\prod_{j=1}^{r}\mathcal{O}_{ji}))=q^{-1}(\prod_{j=1}^{n}(\bigcap_{i\in I}\mathcal{O}_{ji})).$$
	Now note, $\prod_{j=1}^{r}(\bigcap_{i\in I}\mathcal{O}_{ji})\neq \emptyset \iff \bigcap_{i\in I}\mathcal{O}_{ji}\neq \emptyset$ in $M/\mathfrak{m}_jM$, $\forall j=1, 2, \dots, r$. For any index $p$ such that $M/\mathfrak{m}_pM\cong R/\mathfrak{m}_p$, the discrete subspace topology on $M/\mathfrak{m}_pM$ ensures that arbitrary intersections of open sets are non-empty. So,
	$\prod_{j=1}^{r}(\bigcap_{i\in I}\mathcal{O}_{ji})\neq \emptyset \iff \bigcap_{i\in I}\mathcal{O}_{ji}\neq \emptyset$ in $M/\mathfrak{m}_jM$, $\forall \mathfrak{m}_j\in S_M$, for $j=1, \dots, r$. 
	
	Now for $\mathfrak{m}_j\in S_M$, $\bigcap_{i\in I}\mathcal{O}_{ji}\neq \emptyset$ for any family $\lbrace \mathcal{O}_{ji}\rbrace_{i\in I}$ of basic open sets in $M/\mathfrak{m}_jM$ of size $|I|$ $\iff |I|< |R/\mathfrak{m}_j|+1$, by results of section $2.4$. Thus, $\prod_{j=1}^{r}(\bigcap_{i\in I}\mathcal{O}_{ji})\neq \emptyset \iff |I|<\min_{\mathfrak{m}\in S_M}|R/\mathfrak{m}|+1$. Since $q$ is surjective, it follows that $\bigcap_{i\in I}\mathcal{U}_i=\bigcap_{i\in I}q^{-1}(\prod_{j=1}^{r}\mathcal{O}_{ji})=q^{-1}(\bigcap_{i\in I}(\prod_{j=1}^{r}\mathcal{O}_{ji}))\neq \emptyset$, for any family $\lbrace \mathcal{U}_i\rbrace_{i\in I}$ of basic open sets in $M$ of size $|I|$ if and only if $|I|<\min_{\mathfrak{m}\in S_M}|R/\mathfrak{m}|+1$. Thus, taking complements, it follows that $\sigma_{\tau}(M, R)=\min_{\mathfrak{m}\in S_M}|R/\mathfrak{m}|+1$.
	
	The induced topology on $M$ makes $q$ into an open continuous map. One can check that the inverse image of any dense set under any open map is dense. We also know that the product of dense sets is dense in the product space and that every open set in each of the factor spaces is dense in the corresponding factor space. These together imply that every open subset of the induced Zariski topology on $M$ is dense in the space $M$. Now $\sigma_{\tau}(M, R)=\kappa_M$ along with the density of the open sets in $M$ under the induced Zariski topology are equivalent to $M$ being a $\kappa_M$-maximal Baire space, for $\kappa_M=\min_{\mathfrak{m}\in S_M}|R/\mathfrak{m}|+1 $.
\end{proof}

\subsection{The topology for general finitely generated modules}

All rings considered are unital and commutative, as before. \textbf{Throughout this section, we will assume that given a finitely generated $R$-module $M$, the ring $R$ is such that the quotient field $R/\mathfrak{m}$ is infinite for all $\mathfrak{m}\in S_M=\lbrace \mathfrak{m}\in \mspec(R):$ $\dim_{R/\mathfrak{m}}(M/\mathfrak{m}M)\geq2\rbrace$}.

Recall from Definition~\ref{factordef} how the factor space $M/\mathfrak{m}M$ is topologized, for $\mathfrak{m}\in \mspec(R)$. It is immediate that a factor space with the discrete subspace topology is compact. For factor spaces with the  induced Zariski topology, one sees that $M/\mathfrak{m}M$ is a Noetherian and hence compact topological space. Thus, using the Tychonoff theorem, we have:

\begin{lemma}\label{L6}
	The space $\prod_{\mathfrak{m}\in \mspec(R)}M/\mathfrak{m}M$, with the product topology derived from the factor spaces of $M$, is compact. In fact, if $S_M$ equals $\mspec(R)$, then $\prod_{\mathfrak{m}\in \mspec(R)}M/\mathfrak{m}M$ is a compact $T1$ space.
\end{lemma}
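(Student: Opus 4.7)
My plan is to verify the two ingredients needed to apply Tychonoff's theorem, and then check the $T1$ assertion factor-by-factor.

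First, I would show that each factor space $M/\mathfrak{m}M$ is compact. There are two cases, matching Definition~\ref{factordef}. If $\mathfrak{m}\notin S_M$, then $M/\mathfrak{m}M$ carries the topology whose only closed sets are $\emptyset,\{0\},M/\mathfrak{m}M$; this space has only finitely many open sets, so any open cover has a finite subcover trivially. If $\mathfrak{m}\in S_M$, then $M/\mathfrak{m}M$ is a finite-dimensional $R/\mathfrak{m}$-vector space under its induced Zariski topology, and is therefore homeomorphic to some affine space $(R/\mathfrak{m})^{k}$ with the classical Zariski topology. The chain condition on closed subsets of affine $n$-space (equivalently, the ascending chain condition on ideals of the polynomial ring $R/\mathfrak{m}[X_1,\dots,X_k]$ supplied by Hilbert's basis theorem) shows that this space is Noetherian, and every Noetherian topological space is compact. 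Thus in both cases the factor is compact.

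Next, I would invoke Tychonoff's theorem: an arbitrary product of compact spaces in the product topology is compact. Applying this to $\prod_{\mathfrak{m}\in\mspec(R)}M/\mathfrak{m}M$ with the compact factors just established yields the first conclusion.

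Finally, for the $T1$ assertion when $S_M=\mspec(R)$, I would argue that every factor $M/\mathfrak{m}M$ is $T1$ in this regime. Indeed, under the induced Zariski topology, any singleton $\{v\}\subset(R/\mathfrak{m})^{k}$ is the common vanishing locus of the affine linear polynomials $X_i-v_i$ for $i=1,\dots,k$, so singletons are closed and the factor is $T1$. Any product of $T1$ spaces is $T1$ (a point in the product is the intersection of the preimages of its coordinates, each of which is closed by continuity of the projections), so $\prod_{\mathfrak{m}\in\mspec(R)}M/\mathfrak{m}M$ is $T1$. Combined with compactness, this yields the second conclusion. I do not anticipate a serious obstacle; the main point is simply to assemble Tychonoff, Noetherianness of affine space over an infinite field, and the coordinate-hyperplane description of points.
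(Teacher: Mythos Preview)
Your proposal is correct and follows essentially the same route as the paper: the paper remarks that the discrete subspace topology factors are trivially compact, that the induced Zariski topology factors are Noetherian (hence compact), and then invokes Tychonoff, with the $T1$ claim left implicit. Your write-up simply fills in the details the paper leaves to the reader.
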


In the case of finite dual Goldie dimension modules, the topology on $M$ is induced by the natural map to a finite product of the factor spaces of $M$. In the general case, the product $\prod_{\mathfrak{m}\in \mspec(R)}M/\mathfrak{m}M$ may be infinite. Nevertheless, we now show that one can always interpret the topological space $\prod_{\mathfrak{m}\in \mspec(R)}M/\mathfrak{m}M$ as obtained by ``gluing" together all possible copies of products (or direct sums) of finitely many spaces $M/\mathfrak{m}M$, where $\mathfrak{m}\in \mspec(R)$. 

For any finite subset $\sigma=\lbrace \mathfrak{m}_1, \dots, \mathfrak{m}_k\rbrace$ of $\mspec(R)$, we have the natural map $q_{\sigma}:M\rightarrow \prod_{\mathfrak{m}\in \sigma}M/\mathfrak{m}M$. These maps are easier to understand, since these are surjective, unlike the map $q: M\rightarrow \prod_{\mathfrak{m}\in \mspec(R)}M/\mathfrak{m}M$, which may not be surjective. 

Let $\mathcal{F}=\lbrace \sigma\subseteq \mspec(R):$ $|\sigma|<\infty\rbrace$ be a directed poset with $\sigma_1\preccurlyeq \sigma_2$ if $\sigma_1\subseteq \sigma_2$, for all $\sigma_1, \sigma_2\in \mathcal{F}$. For each $\sigma\in \mathcal{F}$, consider $M_\sigma=\prod_{\mathfrak{m}\in\sigma}M/\mathfrak{m}M$ as a topological space, with the product topology derived from the factor spaces occurring in the product. Now consider a family of continuous maps $p_{ij}: M_{\sigma_j}\rightarrow M_{\sigma_i}$ for $\sigma_i\preccurlyeq \sigma_j$ as follows:
\begin{enumerate}
	\item $p_{ii}$ is the identity map on $M_{\sigma_i}$, for all $\sigma_i\in \mathcal{F}$.
	\item $p_{ij}: \prod_{\mathfrak{m}\in \sigma_j}M/\mathfrak{m}M\rightarrow \prod_{\mathfrak{m}\in \sigma_i}M/\mathfrak{m}M$ is the projection map for $\sigma_i\preccurlyeq \sigma_j$.
\end{enumerate}

Clearly, this defines an inverse system $\Sigma_1=((M_{\sigma})_{\sigma\in\mathcal{F}}, (p_{ij})_{\sigma_i\preccurlyeq\sigma_j\in\mathcal{F}})$ of topological spaces and surjective $R$-linear continuous maps, with inverse limit $\varprojlim_{\sigma\in\mathcal{F}}M_{\sigma}$.  The inverse limit comes along with natural projection maps $\pi_\sigma: \varprojlim_{\sigma\in\mathcal{F}}M_{\sigma}\rightarrow M_{\sigma}$ for all $\sigma\in\mathcal{F}$.

It is immediate from the universal property of inverse limits, and the existence of natural projection maps $f_{\sigma}:\prod_{\mathfrak{m}\in \mspec(R)}M/\mathfrak{m}M\rightarrow M_\sigma $, that one always has a unique continuous map $\tilde{f}:\prod_{\mathfrak{m}\in \mspec(R)}M/\mathfrak{m}M\rightarrow \varprojlim_{\sigma\in\mathcal{F}}M_{\sigma}$, such that $\pi_\sigma\circ\tilde{f}=f_\sigma$. We claim that this map is bijective.

\begin{proposition}\label{P14}
	There exists a unique $R$-linear homeomorphism between $\prod_{\mathfrak{m}\in \mspec(R)}M/\mathfrak{m}M$ and the inverse limit $\varprojlim_{\sigma\in\mathcal{F}}M_{\sigma}$ of the inverse system $((M_{\sigma})_{\sigma\in\mathcal{F}}, (p_{ij})_{\sigma_i\preccurlyeq\sigma_j\in\mathcal{F}})$.
\end{proposition}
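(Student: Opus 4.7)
The plan is to construct the map $\tilde{f}$ via the universal property of inverse limits, then verify it is a bijection by exploiting singleton index sets $\sigma = \{\mathfrak{m}\}$, and finally observe that both spaces carry initial (= coarsest) topologies induced by a common family of maps, which forces the continuous bijection to be a homeomorphism.

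More concretely, I would proceed as follows. First, the natural projections $f_\sigma : \prod_{\mathfrak{m}\in \mspec(R)} M/\mathfrak{m}M \to M_\sigma$ are $R$-linear continuous maps compatible with the bonding projections $p_{ij}$ in $\Sigma_1$ (since any projection through a larger finite index set, followed by a further projection, equals the direct projection to the smaller index set). The universal property of the inverse limit therefore yields a unique $R$-linear continuous map
\[
\tilde{f} : \prod_{\mathfrak{m}\in \mspec(R)} M/\mathfrak{m}M \longrightarrow \varprojlim_{\sigma\in \mathcal{F}} M_\sigma, \qquad \pi_\sigma \circ \tilde{f} = f_\sigma \ \ \forall \sigma \in \mathcal{F}.
\]
Uniqueness as an $R$-linear homeomorphism (once we prove homeomorphism) will be immediate from the uniqueness clause of the universal property.

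Next, I would establish bijectivity. For injectivity, if $\tilde{f}(x) = \tilde{f}(y)$, then for every singleton $\sigma = \{\mathfrak{m}\}$ we have $f_{\{\mathfrak{m}\}}(x) = f_{\{\mathfrak{m}\}}(y)$, i.e. the $\mathfrak{m}$-component of $x$ equals that of $y$; since this holds for all $\mathfrak{m}\in \mspec(R)$, we get $x=y$. For surjectivity, take any compatible family $(y_\sigma)_{\sigma\in \mathcal{F}} \in \varprojlim_\sigma M_\sigma$ and define $x \in \prod_{\mathfrak{m}} M/\mathfrak{m}M$ by $x_\mathfrak{m} := y_{\{\mathfrak{m}\}}$. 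The compatibility condition $p_{ij}(y_{\sigma_j}) = y_{\sigma_i}$ applied to inclusions $\{\mathfrak{m}\} \subseteq \sigma$ shows that the $\mathfrak{m}$-coordinate of $y_\sigma$ equals $x_\mathfrak{m}$, hence $f_\sigma(x) = y_\sigma$ for every $\sigma\in\mathcal{F}$, proving $\tilde{f}(x) = (y_\sigma)_\sigma$.

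Finally, I would argue the homeomorphism claim by identifying subbases. The product topology on $\prod_{\mathfrak{m}} M/\mathfrak{m}M$ is by definition the initial topology with respect to the coordinate projections $f_{\{\mathfrak{m}\}}$, equivalently (since finite intersections of subbasic opens are basic opens) with respect to the full family $\{ f_\sigma : \sigma \in \mathcal{F}\}$. The inverse limit $\varprojlim_\sigma M_\sigma$ carries, by construction, the initial topology with respect to $\{\pi_\sigma : \sigma \in \mathcal{F}\}$. Since $\pi_\sigma \circ \tilde{f} = f_\sigma$, the subbasic opens on the two sides correspond exactly under $\tilde{f}$: $\tilde{f}^{-1}(\pi_\sigma^{-1}(U)) = f_\sigma^{-1}(U)$, and (using that $\tilde{f}$ is already known to be a bijection) $\tilde{f}(f_\sigma^{-1}(U)) = \pi_\sigma^{-1}(U)$. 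Hence $\tilde{f}$ is open as well as continuous, so it is a homeomorphism.

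I do not expect a serious obstacle here; the only mild point to keep track of is that the topology on each factor $M/\mathfrak{m}M$ agrees in both constructions (it does, by Definition~\ref{factordef}) and that the finite-subset index set $\mathcal{F}$ is cofinal enough that the singleton projections already separate points, which is what makes injectivity free. The whole argument is a formal consequence of the universal property together with the fact that a product is, tautologically, the inverse limit of its finite sub-products.
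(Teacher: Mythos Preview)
Your proof is correct and shares the paper's essential strategy: produce $\tilde{f}$ from the universal property and use the singleton indices $\{\mathfrak{m}\}\in\mathcal{F}$ to control bijectivity. The only difference is in packaging the homeomorphism step. The paper, instead of comparing initial topologies, writes down an explicit continuous inverse $\tilde{g}:\varprojlim_{\sigma}M_\sigma\to\prod_{\mathfrak{m}}M/\mathfrak{m}M$ obtained by restricting the projection $\prod_{\sigma\in\mathcal{F}}M_\sigma\to\prod_{\mathfrak{m}\in\mspec(R)}M/\mathfrak{m}M$ (via the identification of $\mspec(R)$ with the singletons in $\mathcal{F}$) to the inverse limit, and then checks $\tilde{g}\circ\tilde{f}$ and $\tilde{f}\circ\tilde{g}$ are identities. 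Your surjectivity construction is exactly this $\tilde{g}$ in disguise; the paper's version has the mild advantage that continuity of the inverse comes for free as a restriction of a projection, whereas your initial-topology comparison makes more transparent why the whole statement is formal.
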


\begin{proof}
Note that the inverse limit has an explicit description as:
$$\varprojlim_{\sigma\in\mathcal{F}}M_{\sigma}=\lbrace (x_\sigma)_{\sigma\in\mathcal{F}}\in \prod_{\sigma\in\mathcal{F}}M_{\sigma} \ \text{such that } p_{ij}\circ\pi_{\sigma_j}((x_\sigma)_{\sigma\in\mathcal{F}})=\pi_{\sigma_i}((x_\sigma)_{\sigma\in\mathcal{F}}) \ \forall \sigma_i\preccurlyeq\sigma_j\rbrace.$$ We have a unique continuous map $\tilde{f}:\prod_{\mathfrak{m}\in \mspec(R)}M/\mathfrak{m}M\rightarrow \varprojlim_{\sigma\in\mathcal{F}}M_{\sigma}$, such that $\pi_\sigma\circ\tilde{f}=f_\sigma$. Let $(x_\sigma)_{\sigma\in\mathcal{F}}\in \varprojlim_{\sigma\in\mathcal{F}}M_{\sigma}$ be an arbitrary element. Let $\mathfrak{m}$ denote the element $\lbrace\mathfrak{m}\rbrace\in\mathcal{F}$ for each $\mathfrak{m}\in\mspec(R)$ and consider the element $x=(x_\mathfrak{m})_{\mathfrak{m}\in \mspec(R)}\in \prod_{\mathfrak{m}\in \mspec(R)}M/\mathfrak{m}M$. Then $\tilde{f}(x)=(f_{\sigma}(x))_{\sigma\in\mathcal{F}}\in \varprojlim_{\sigma\in\mathcal{F}}M_{\sigma}$, where $f_{\sigma}(x)=(x_\mathfrak{m})_{\mathfrak{m}\in\sigma}$. 

Since $\mspec(R)$ can be realised as a subset of $\mathcal{F}$ by identifying $\mathfrak{m}\in \mspec(R)$ with the element $\lbrace\mathfrak{m}\rbrace\in\mathcal{F}$, there is a natural continuous projection map $g: \prod_{\sigma\in\mathcal{F}}M_{\sigma}\rightarrow \prod_{\mathfrak{m}\in \mspec(R)}M/\mathfrak{m}M$. Let $\tilde{g}$ be the restriction of $g$ to the inverse limit, which is a subspace of $\prod_{\sigma\in\mathcal{F}}M_{\sigma}$. Thus, we have a continuous map $\tilde{g}: \varprojlim_{\sigma\in\mathcal{F}}M_{\sigma}\rightarrow \prod_{\mathfrak{m}\in \mspec(R)}M/\mathfrak{m}M$, which maps an element $(x_\sigma)_{\sigma\in\mathcal{F}}\in \varprojlim_{\sigma\in\mathcal{F}}M_{\sigma}$ to $(x_\mathfrak{m})_{\mathfrak{m}\in \mspec(R)}\in \prod_{\mathfrak{m}\in \mspec(R)}M/\mathfrak{m}M$, where $x_{\mathfrak{m}}=x_{\lbrace\mathfrak{m}\rbrace}$. 

It can be easily checked that $\tilde{g}\circ\tilde{f}=\mathbf{1}_{\Pi}$, where $\mathbf{1}_{\Pi}$ is the identity map on $\prod_{\mathfrak{m}\in \mspec(R)}M/\mathfrak{m}M$. Similarly, one sees that $\tilde{f}\circ\tilde{g}=\mathbf{1}_{inv}$, where $\mathbf{1}_{inv}$ is the identity map on $\varprojlim_{\sigma\in\mathcal{F}}M_{\sigma}$.
\end{proof}

Now consider the directed poset $\mathcal{I}(R)=\lbrace$non-trivial proper ideals $I\subsetneq R\rbrace$, with partial order $\preccurlyeq$ where $I_1\preccurlyeq I_2$ if and only if $I_2\subseteq I_1$. For each ideal $I\in\mathcal{I}(R)$, consider the quotient $M/IM$, which is also an $R/I$ module. We wish to topologize this. Consider the collection $m(I)$ of maximal ideals of $R$ containing $I$. For each $\mathfrak{m}\in m(I)$, there exists a projection map $p_{\mathfrak{m}, I}: M/IM\rightarrow M/\mathfrak{m}M$. This results in a natural map $p_{I}=(p_{\mathfrak{m}, I})_{\mathfrak{m}\in m(I)}: M/IM\rightarrow \prod_{\mathfrak{m}\in m(I)}M/\mathfrak{m}M$. We already consider $M/\mathfrak{m}M$ as a factor space, with topology as determined by Definition~\ref{factordef}, for all $\mathfrak{m}\in \mspec(R)$. Now equip $\prod_{\mathfrak{m}\in m(I)}M/\mathfrak{m}M$ with the resulting product topology, and topologize $M/IM$ with the coarsest topology, which makes the map $p_I$ continuous.

Note that if $I\preccurlyeq J$, then $J\subseteq I$ and so $m(I)\subseteq m(J)$. Thus, there is a natural continuous projection map $\tilde{q}_{IJ}:\prod_{\mathfrak{m}\in m(J)}M/\mathfrak{m}M\rightarrow \prod_{\mathfrak{m}\in m(I)}M/\mathfrak{m}M$. We also have a natural quotient map $q_{IJ}: M/JM\rightarrow M/IM$ . One can check that the following diagram commutes:
$$\begin{tikzcd}
M/JM \arrow[d, "p_J"'] \arrow[r, "q_{IJ}", two heads] & M/IM \arrow[d, "p_I"] \\
\prod_{\mathfrak{m}\in m(J)}M/\mathfrak{m}M \arrow[r, "\tilde{q}_{IJ}"', two heads]                & \prod_{\mathfrak{m}\in m(I)}M/\mathfrak{m}M               
\end{tikzcd}$$

By the definition of the topology on $M/IM$, the open sets are $p_I^{-1}(U)$ for open subsets $U\subseteq \prod_{\mathfrak{m}\in m(I)}M/\mathfrak{m}M$. By commutativity of the above diagram, $q_{IJ}^{-1}(p_I^{-1}(U))=p_J^{-1}(\tilde{q}_{IJ}^{-1}(U))$. Now the continuity of $\tilde{q}_{IJ}$ and $p_J$
implies that of $q_{IJ}$. Also, for each $I\in \mathcal{I}(R)$, define $q_{II}$ to be the identity map on $M/IM$. This defines an inverse system $\Sigma_2=((M/IM)_{I\in\mathcal{I}(R)}, (q_{IJ})_{I\preccurlyeq J\in\mathcal{I}(R)})$ of topological spaces and surjective continuous  $R$-linear maps.

Recalling the inverse system $\Sigma_1=((M_{\sigma})_{\sigma\in\mathcal{F}}, (p_{ij})_{\sigma_i\preccurlyeq\sigma_j\in\mathcal{F}})$ studied in Proposition~\ref{P14}, we have a mapping $\lbrace\phi, \lbrace\psi_{\sigma}\rbrace_{\sigma\in\mathcal{F}}\rbrace$ of inverse systems from $\Sigma_1$ to $\Sigma_2$ as follows (see \cite[pp.~101]{engelking} for relevant definitions):

\begin{enumerate}
	\item $\phi: \mathcal{F}\rightarrow \mathcal{I}(R)$, defined by mapping $\sigma\in\mathcal{F}$ to $\bigcap_{\mathfrak{m}\in\sigma}\mathfrak{m}\in\mathcal{I}(R)$.
	\item The isomorphism map $\psi_{\sigma}:M_\sigma\rightarrow M/(\bigcap_{\mathfrak{m}\in\sigma}\mathfrak{m}M)$ for all $\sigma\in\mathcal{F}$.
\end{enumerate}

Clearly $\phi(\mathcal{F})$ is a cofinal subset of $\mathcal{I}(R)$, since for every ideal $I\in \mathcal{I}(R)$, there exists some maximal ideal $\mathfrak{m}$ containing it. Then by Proposition $2.5.10$ of \cite{engelking}, it follows that:

\begin{proposition}\label{P15}
	There is a mapping $\lbrace\phi, \lbrace\psi_{\sigma}\rbrace_{\sigma\in\mathcal{F}}\rbrace$ of inverse systems from $\Sigma_1=((M_{\sigma})_{\sigma\in\mathcal{F}}, (p_{ij})_{\sigma_i\preccurlyeq\sigma_j\in\mathcal{F}})$ to $\Sigma_2=((M/IM)_{I\in\mathcal{I}(R)}, (q_{IJ})_{I\preccurlyeq J\in\mathcal{I}(R)})$, which induces a homeomorphism between the inverse limit $\varprojlim_{\sigma\in\mathcal{F}}M_{\sigma}$ of $\Sigma_1$ and the inverse limit $\varprojlim_{I\in\mathcal{I}(R)}M/IM$ of $\Sigma_2$.
\end{proposition}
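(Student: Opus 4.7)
The plan is to verify the hypotheses of Proposition~$2.5.10$ of \cite{engelking} for the data $\{\phi, \{\psi_\sigma\}_{\sigma \in \mathcal{F}}\}$. This requires three things: (a) each $\psi_\sigma$ is a well-defined $R$-linear homeomorphism; (b) the squares
\[
\begin{tikzcd}
M_{\sigma_j} \arrow[r, "\psi_{\sigma_j}"] \arrow[d, "p_{ij}"'] & M/\phi(\sigma_j)M \arrow[d, "q_{\phi(\sigma_i)\phi(\sigma_j)}"] \\
M_{\sigma_i} \arrow[r, "\psi_{\sigma_i}"'] & M/\phi(\sigma_i)M
\end{tikzcd}
\]
commute for each $\sigma_i \preccurlyeq \sigma_j$ in $\mathcal{F}$; and (c) the image $\phi(\mathcal{F})$ is cofinal in $\mathcal{I}(R)$, which is already noted in the excerpt.

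For (a), I would invoke the Chinese Remainder Theorem for modules. Since any two distinct maximal ideals $\mathfrak{m}, \mathfrak{n}$ satisfy $\mathfrak{m} + \mathfrak{n} = R$, one obtains $\mathfrak{m}M + \mathfrak{n}M = M$ and $\mathfrak{m}M \cap \mathfrak{n}M = \mathfrak{m}\mathfrak{n}M$; an induction on $|\sigma|$ then yields both the identity $\bigcap_{\mathfrak{m} \in \sigma} \mathfrak{m}M = \phi(\sigma) M$ and an $R$-linear bijection
\[
p_{\phi(\sigma)} : M/\phi(\sigma)M \xrightarrow{\ \cong\ } \prod_{\mathfrak{m} \in \sigma} M/\mathfrak{m}M = M_\sigma,
\]
whose inverse is $\psi_\sigma$. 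Moreover, any maximal ideal $\mathfrak{n}$ containing $\phi(\sigma) = \bigcap_{\mathfrak{m} \in \sigma} \mathfrak{m}$ is prime, hence coincides with some element of $\sigma$; thus $m(\phi(\sigma)) = \sigma$ exactly. The topology on $M/\phi(\sigma)M$ is by definition the coarsest making $p_{\phi(\sigma)}$ continuous into the product topology on $M_\sigma$, and bijectivity then upgrades $\psi_\sigma$ to a homeomorphism.

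For (b), take $(\bar x_{\mathfrak{m}})_{\mathfrak{m} \in \sigma_j} \in M_{\sigma_j}$: along the top-right path, $\psi_{\sigma_j}$ sends this tuple to $y + \phi(\sigma_j)M$ for any $y \in M$ whose reductions mod $\mathfrak{m}M$ agree with $\bar x_{\mathfrak{m}}$ for $\mathfrak{m} \in \sigma_j$, and the natural quotient map $q_{\phi(\sigma_i)\phi(\sigma_j)}$ carries this to $y + \phi(\sigma_i)M$. Along the left-bottom path, $p_{ij}$ discards the coordinates indexed by $\sigma_j \setminus \sigma_i$ and $\psi_{\sigma_i}$ lifts the remaining tuple to $y + \phi(\sigma_i)M$ for the very same $y$. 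The two composites therefore agree. Statement (c) is immediate: every proper non-trivial $I \in \mathcal{I}(R)$ lies in some maximal ideal $\mathfrak{m}$, so $\phi(\{\mathfrak{m}\}) = \mathfrak{m} \supseteq I$, giving $I \preccurlyeq \phi(\{\mathfrak{m}\})$ in the reverse-inclusion order.

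With (a)--(c) in hand, Proposition~$2.5.10$ of \cite{engelking} delivers the induced $R$-linear homeomorphism $\varprojlim_{\sigma \in \mathcal{F}} M_\sigma \cong \varprojlim_{I \in \mathcal{I}(R)} M/IM$. I expect the main subtlety to lie in part (a): one must recognize that $m(\phi(\sigma)) = \sigma$ precisely, so that the topology imposed on $M/\phi(\sigma)M$ by the recipe of Section~\ref{S5} genuinely matches the product topology on $M_\sigma$ pulled back through the CRT isomorphism. Once this identification is in place, the remaining verifications are bookkeeping at the level of residue tuples.
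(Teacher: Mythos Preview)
Your proposal is correct and follows exactly the paper's approach: the paper defines $\phi$ and the $\psi_\sigma$, notes cofinality of $\phi(\mathcal{F})$ in $\mathcal{I}(R)$, and invokes Proposition~2.5.10 of \cite{engelking} without further detail. You simply supply the verifications (CRT for the bijectivity of $\psi_\sigma$, the identification $m(\phi(\sigma))=\sigma$ to match the topologies, and commutativity of the squares) that the paper leaves implicit.
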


As a consequence of Propositions~\ref{P15} and \ref{P14}, we obtain:

\begin{corollary}\label{C13}
	The topological spaces $\prod_{\mathfrak{m}\in \mspec(R)}M/\mathfrak{m}M$, $\varprojlim_{\sigma\in\mathcal{F}}M_{\sigma}$, and $\varprojlim_{I\in\mathcal{I}(R)}M/IM$ are all homeomorphic via the natural maps.
\end{corollary}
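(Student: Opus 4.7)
The statement to be proved is Corollary~\ref{C13}, which is essentially a formal consequence of Propositions~\ref{P14} and~\ref{P15}. The plan is therefore to chain the two homeomorphisms already established, while carefully verifying that the composed map agrees with the obvious ``natural'' candidate arising from the universal property of inverse limits.

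First, I would invoke Proposition~\ref{P14} to obtain a canonical $R$-linear homeomorphism
\[
\tilde{f} : \prod_{\mathfrak{m}\in \mspec(R)} M/\mathfrak{m}M \;\xrightarrow{\;\sim\;}\; \varprojlim_{\sigma\in\mathcal{F}} M_\sigma,
\]
characterized by $\pi_\sigma \circ \tilde{f} = f_\sigma$ for the natural projections $f_\sigma$ and $\pi_\sigma$. Next, I would apply Proposition~\ref{P15}, using the mapping of inverse systems $\{\phi, \{\psi_\sigma\}_{\sigma\in\mathcal{F}}\}$ from $\Sigma_1$ to $\Sigma_2$, to get a homeomorphism
\[
\widetilde{\psi} : \varprojlim_{\sigma\in\mathcal{F}} M_\sigma \;\xrightarrow{\;\sim\;}\; \varprojlim_{I\in\mathcal{I}(R)} M/IM,
\]
whose existence is guaranteed by the cofinality of $\phi(\mathcal{F})$ in $\mathcal{I}(R)$ together with each $\psi_\sigma$ being an isomorphism of topological modules (via the Chinese Remainder Theorem on $M/(\bigcap_{\mathfrak{m}\in\sigma}\mathfrak{m})M \cong \prod_{\mathfrak{m}\in\sigma} M/\mathfrak{m}M$, the latter having been topologized so as to make this a homeomorphism).

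Composing, the map $\widetilde{\psi}\circ\tilde{f}$ gives the desired homeomorphism between $\prod_{\mathfrak{m}\in\mspec(R)} M/\mathfrak{m}M$ and $\varprojlim_{I\in\mathcal{I}(R)} M/IM$. The final bookkeeping step is to check that this composition coincides with the ``natural'' map induced by the universal property of $\varprojlim_{I\in\mathcal{I}(R)} M/IM$ applied to the compatible family of continuous projections $\prod_{\mathfrak{m}\in \mspec(R)} M/\mathfrak{m}M \to M/IM$ sending $(x_\mathfrak{m})_{\mathfrak{m}\in\mspec(R)}$ to the image of $(x_\mathfrak{m})_{\mathfrak{m}\in m(I)}$ under the inverse of $p_I$. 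This is a direct diagram chase using commutativity of the square displayed just before Proposition~\ref{P15}.

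The only mild subtlety, and the step I would expect to need the most care, is this last verification of naturality: one must confirm that the identifications $\psi_\sigma$ are compatible with the bonding maps $q_{IJ}$ of $\Sigma_2$, so that the composite indeed factors through the universal map. Since each $\psi_\sigma$ is literally the isomorphism provided by the Chinese Remainder Theorem, and the bonding maps on both sides are the natural quotient/projection maps, compatibility reduces to the observation that taking a further quotient commutes with restricting the index set of a product. No new ideas are needed beyond those already present in Propositions~\ref{P14} and~\ref{P15}; the corollary is essentially a packaging statement.
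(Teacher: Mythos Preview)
Your proposal is correct and matches the paper's approach exactly: the paper states Corollary~\ref{C13} as an immediate consequence of Propositions~\ref{P14} and~\ref{P15} with no further proof, and you do precisely this composition. If anything, your added verification that the composite agrees with the natural universal-property map is more detail than the paper provides.
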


\begin{remark}\label{R11}
\begin{enumerate}
    \item Corollary~\ref{C13} shows that the three ways of assembling ``simpler" topological spaces result in larger spaces that are all naturally homeomorphic to each other. By the universal property of inverse limits, there exist natural $R$-linear maps $q:M\rightarrow \prod_{\mathfrak{m}\in \mspec(R)}M/\mathfrak{m}M$, $\Psi: M\rightarrow \varprojlim_{\sigma\in\mathcal{F}}M_{\sigma}$ and $\Phi: M\rightarrow \varprojlim_{I\in\mathcal{I}(R)}M/IM$. Since these three spaces are naturally homeomorphic, it will follow that if the topology on $M$ makes one of the maps $q, \Psi$ or $\Phi$ continuous, then the other two shall be continuous too.\smallskip
    
    \item In each of the three spaces in Corollary~\ref{C13}, the ``ill-behaved" factors are the ones coming from the  factor spaces $M/\mathfrak{m}M$ with the discrete subspace topology, which is a coarse topology (not even $T0$). This artificially introduced coarseness ensures that the factors $M/\mathfrak{m}M$ with $\dim_{R/\mathfrak{m}}(M/\mathfrak{m}M)=1$ do not affect the quantity $\sigma_{\tau}(M, R)$. Thus, the only factor spaces which influence $\sigma_{\tau}(M, R)$ are the ones corresponding to the maximal ideals in $S_M=\lbrace \mathfrak{m}\in \mspec(R):$ $\dim_{R/\mathfrak{m}}(M/\mathfrak{m}M)\geq 2\rbrace$.
    
     Letting $\mathcal{F}_S=\lbrace \sigma\in\mathcal{F}:$ $\sigma\subseteq S_M\rbrace$, one can show that the inverse limit $\varprojlim_{\sigma\in\mathcal{F}_S}M_{\sigma}$ is homeomorphic to $\prod_{\mathfrak{m}\in S_M}M/\mathfrak{m}M$, thereby proving that $\varprojlim_{\sigma\in\mathcal{F}_S}M_{\sigma}$ is a compact $T1$ space. If the factors were finite, then the induced Zariski topology would be discrete on each factor, and one would instead get the profinite topology, which is a compact Hausdorff space. Thus, the topology we defined on the product of the well-behaved components, can be considered as a $T1$ generalization of the profinite topology. 
\end{enumerate}
\end{remark}

Now recall the natural map $q: M\rightarrow \prod_{\mathfrak{m}\in \mspec(R)}M/\mathfrak{m}M$. The image $q(M)$ is topologized by equipping it with the subspace topology of the product space $\prod_{\mathfrak{m}\in \mspec(R)}M/\mathfrak{m}M$. Since $q(M)\cong M/\jac(M)$, one can topologize $M$ by equipping it with the coarsest topology which makes the map $q$ continuous. Note, $q(M)$ is not an arbitrary subspace of $\prod_{\mathfrak{m}\in \mspec(R)}M/\mathfrak{m}M$:

\begin{proposition}\label{P16}
	The image of $M$ under the continuous map $q: M\rightarrow \prod_{\mathfrak{m}\in \mspec(R)}M/\mathfrak{m}M$, is dense in the product space.
\end{proposition}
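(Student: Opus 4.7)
My strategy is to verify density directly from the definition: every non-empty basic open set of $\prod_{\mathfrak{m}\in\mspec(R)}M/\mathfrak{m}M$ must meet $q(M)$. Since the codomain carries the product topology, any such basic open set has only finitely many ``restricted'' coordinates, and this finiteness, combined with the Chinese Remainder Theorem in $R$, is the essential input.

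First, I would fix a non-empty basic open set $U$ of the form $U = \prod_{\mathfrak{m}\in\mspec(R)} U_\mathfrak{m}$, where $U_\mathfrak{m}$ is a non-empty open subset of the factor space $M/\mathfrak{m}M$ for every $\mathfrak{m}$, and $U_\mathfrak{m} = M/\mathfrak{m}M$ for all $\mathfrak{m}$ outside some finite subset $\sigma = \{\mathfrak{m}_1,\dots,\mathfrak{m}_k\} \subset \mspec(R)$. Using that each quotient map $q_{\mathfrak{m}_i}\colon M\twoheadrightarrow M/\mathfrak{m}_iM$ is surjective, for each $i$ I would choose a point $y_i \in U_{\mathfrak{m}_i}$ together with a lift $\tilde{y}_i \in M$ satisfying $q_{\mathfrak{m}_i}(\tilde{y}_i) = y_i$. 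The task then reduces to producing a single $x \in M$ with $q_{\mathfrak{m}_i}(x) = y_i$ for all $i$, i.e., to showing surjectivity of $q_\sigma\colon M\to\prod_{i=1}^k M/\mathfrak{m}_i M$ for every finite set $\sigma$ of distinct maximal ideals.

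The key step is the Chinese Remainder Theorem: since $\mathfrak{m}_1,\dots,\mathfrak{m}_k$ are pairwise distinct maximal ideals of $R$ and hence pairwise comaximal, there exist $r_1,\dots,r_k \in R$ with $r_i - \delta_{ij} \in \mathfrak{m}_j$ for all $i, j$. Setting $x = \sum_{i=1}^k r_i \tilde{y}_i$, a direct computation shows $x \equiv \tilde{y}_j \pmod{\mathfrak{m}_j M}$ for every $j$, because $r_i\tilde{y}_i \in \mathfrak{m}_j M$ for $i\neq j$ and $(r_j - 1)\tilde{y}_j \in \mathfrak{m}_j M$. Hence $q_{\mathfrak{m}_j}(x) = y_j$ for each $j$, so $q(x) \in U$ and $q(M)\cap U \neq \emptyset$. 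Since $U$ was an arbitrary non-empty basic open set, $q(M)$ is dense in the product.

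I do not anticipate any genuine obstacle. The only ingredients are the definition of the product topology (to reduce to finitely many coordinates) and the CRT in $R$ (to realize any tuple of residues by an element of $M$); in particular, this argument uses neither finite generation of $M$ nor any hypothesis on the residue fields $R/\mathfrak{m}$, and it is insensitive to whether the individual $M/\mathfrak{m}M$ carry the induced Zariski or the discrete subspace topology.
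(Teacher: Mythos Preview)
Your proof is correct and follows essentially the same approach as the paper: reduce to finitely many coordinates using the definition of the product topology, then invoke the Chinese Remainder Theorem to show that $q_\sigma\colon M\to\prod_{i=1}^k M/\mathfrak{m}_iM$ is surjective and hence $q(M)$ meets every non-empty basic open set. The paper simply asserts this surjectivity as a consequence of CRT, whereas you spell out the explicit construction via approximate idempotents $r_i$; the arguments are otherwise identical.
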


This result shows that the space $\prod_{\mathfrak{m}\in \mspec(R)}M/\mathfrak{m}M$ provides a compactification of $M/\jac(M)$.

\begin{proof}
	Let $\prod_{\mathfrak{m}\in \mspec(R)}\mathcal{O}_{\mathfrak{m}}$ be a basic open set in $\prod_{\mathfrak{m}\in \mspec(R)}M/\mathfrak{m}M$-- so except finitely many indices $\mathfrak{m}_1, \dots, \mathfrak{m}_k$, the remaining $\mathcal{O}_{\mathfrak{m}}$ equal $M/\mathfrak{m}M$. By the Chinese remainder theorem, the natural quotient map $f:M\rightarrow \prod_{i=1}^{k}M/\mathfrak{m}_iM$ is surjective, so there exists $x\in M$ such that $f(x)\in \prod_{i=1}^{k}\mathcal{O}_{\mathfrak{m}_i}$. Now $q(x)\in \prod_{\mathfrak{m}\in \mspec(R)}\mathcal{O}_{\mathfrak{m}}$, so $q(M)$ intersects every basic open subset of $\prod_{\mathfrak{m}\in \mspec(R)}M/\mathfrak{m}M$, and hence is dense in the product space.
\end{proof}

\begin{remark}\label{R12}
	\begin{enumerate}
		\item Assuming the axiom of choice, one can show that the space $\prod_{\mathfrak{m}\in \mspec(R)}M/\mathfrak{m}M$ with the product topology, as defined above, is a $\kappa_M$-maximal Baire space and hence satisfies the equality $\sigma_{\tau}(\prod_{\mathfrak{m}\in \mspec(R)}M/\mathfrak{m}M, R)=\kappa_M$, where $\kappa_M= \min_{\mathfrak{m}\in S_M} |R/\mathfrak{m}|+~1$.\smallskip
		\item  \textbf{$M$ is a $\kappa$-Baire space $\iff$ $M/\jac(M)$ (or $q(M)$) is a $\kappa$-Baire space:}
		
		Every open subset of $M/\jac(M)$ is dense in $M/\jac(M)$, since the same is true for $q(M)$. Thus, the space $M/\jac(M)$ being a $\kappa$-Baire space, is equivalent to the non-emptiness of intersection of any family of less than $\kappa$-many open subsets. By this observation and the definition of topology on $M$, one sees that the surjective map $q': M\rightarrow M/\jac(M)$ is an open map, and hence that $M$ is a $\kappa$-Baire space if and only if $M/\jac(M)$ is a $\kappa$-Baire space.\smallskip
		\item By the definition of the topology on $M$, all proper submodules of $M$ are contained in proper closed submodules. This is because every maximal submodule $K$ of $M$ is closed under the induced Zariski topology. To see this, note that $q_{\mathfrak{m}}(K)=(K+\mathfrak{m}M)/\mathfrak{m}M$ is a subspace of $M/\mathfrak{m}M$ and is hence a closed subset of $M/\mathfrak{m}M$ with the induced Zariski topology. Thus, if $q: M\rightarrow \prod_{\mathfrak{m}\in \mspec(R)}M/\mathfrak{m}M$ is the natural map, inducing the topology on $M$, then $K=K+\jac(M)=\bigcap_{\mathfrak{m}\in \mspec(R)}q_{\mathfrak{m}}^{-1}((K+\mathfrak{m}M)/\mathfrak{m}M)=q^{-1}(\prod_{\mathfrak{m}\in \mspec(R)}q_{\mathfrak{m}}(K))$ is the inverse image of a closed subset of $\prod_{\mathfrak{m}\in \mspec(R)}M/\mathfrak{m}M$, under a continuous map. Thus, recalling the definition of $\sigma_{\tau}(M, R)$, we see that if $M$ is a $\kappa$-maximal Baire space, then $\sigma(M, R)\geq \sigma_{\tau}(M, R)=\kappa$. In particular, we see that if $M$ is a $\kappa_M$-maximal Baire space for $\kappa_M=\min_{\mathfrak{m}\in S_M}|R/\mathfrak{m}|+1$, then $\sigma(M, R)=\kappa_M=\sigma_{\tau}(M, R)$.
	\end{enumerate}
\end{remark}

 Theorem $1.24(ii)$ of \cite{R1977} shows that a dense subspace $X$ of an $\aleph_1$-Baire space $Y$ is also an $\aleph_1$-Baire space, if and only if every $G_\delta$ subset of $Y$ contained in $Y\setminus X$ is nowhere dense in $Y$. One can see that the same proof can be replicated (as commented on pg.$64$ of \cite{R1977}) to prove a $\kappa$-analogue for any infinite cardinal $\kappa$, as follows:

\begin{lemma}\label{L7}
	Let $\kappa$ be an infinite cardinal. A dense subspace $X$ of a $\kappa$-Baire space $Y$ is also a $\kappa$-Baire space, if and only if every $G_{\delta, \kappa}$ subset of $Y$ contained in $Y\setminus X$ is nowhere dense in $Y$. Here, a $G_{\delta, \kappa}$ set is simply the intersection of a family $\mathcal{F}$ of open sets of $Y$, such that $|\mathcal{F}|<\kappa$.
\end{lemma}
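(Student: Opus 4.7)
The plan is to prove both implications by adapting the classical $\aleph_1$-version verbatim, replacing ``countable intersection'' with ``intersection of fewer than $\kappa$ open sets'' throughout. The key observation is that the cardinality $\aleph_1$ plays no special role in the argument: everything that is needed is the closure of the indexing cardinal under the operations in question (taking the family $\{U_i\}_{i \in I}$ with $|I| < \kappa$ and intersecting, which stays in the class of $G_{\delta,\kappa}$ sets).

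For the ($\Leftarrow$) direction, I would start with a family $\{O'_i\}_{i \in I}$ of dense open subsets of $X$ with $|I| < \kappa$, and write each as $O'_i = X \cap U_i$ for some open $U_i \subseteq Y$. Since $O'_i$ is dense in $X$ and $X$ is dense in $Y$, each $U_i$ is dense in $Y$, so by the $\kappa$-Baire property of $Y$, $D := \bigcap_{i\in I} U_i$ is dense in $Y$. Assuming for contradiction that $\bigcap_i O'_i = X \cap D$ is not dense in $X$, I would pick a nonempty open $W \subseteq Y$ with $W \cap X \neq \emptyset$ but $W \cap X \cap D = \emptyset$. Then $W \cap D = W \cap \bigcap_i U_i$ is a $G_{\delta,\kappa}$ subset of $Y$ contained in $Y \setminus X$, so by hypothesis it is nowhere dense in $Y$; but on the other hand $D$ is dense in $Y$ and $W$ is open, so $W \cap D$ is dense in the nonempty open set $W$, yielding the contradiction.

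For the ($\Rightarrow$) direction, given a $G_{\delta,\kappa}$ set $G = \bigcap_{i \in I} U_i \subseteq Y \setminus X$ that fails to be nowhere dense, I would pick a nonempty open $W \subseteq \overline{G}^Y$ and consider the family $O_i := U_i \cup (Y \setminus \overline{W}^Y)$. A short argument (using that every nonempty open subset of $\overline{W}^Y$ meets $W$, and $W \subseteq \overline{G}^Y \subseteq \overline{U_i}^Y$) shows each $O_i$ is dense and open in $Y$, whence each $O_i \cap X$ is dense and open in $X$. The $\kappa$-Baire hypothesis on $X$ then forces $\bigcap_i (O_i \cap X)$ to be dense in $X$. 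But using the distributive identity $\bigcap_i (U_i \cup C) = (\bigcap_i U_i) \cup C$ for $C$ independent of $i$, together with $G \cap X = \emptyset$, this intersection equals $X \setminus \overline{W}^Y$, which misses the nonempty open set $W \cap X$ of $X$ — contradiction.

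I do not expect a genuine obstacle here: both arguments are direct and the only place cardinality enters is in invoking the $\kappa$-Baire hypothesis on $Y$ or on $X$, which is valid as long as $|I| < \kappa$. The one small subtlety worth writing carefully is the verification that $O_i = U_i \cup (Y \setminus \overline{W}^Y)$ is dense in $Y$; this is where the choice of $W$ inside $\overline{G}^Y$ (rather than inside $G$ itself) is used crucially, so I would spell this step out while leaving the rest as routine.
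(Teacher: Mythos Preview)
Your proposal is correct and takes essentially the same approach as the paper: the paper does not give a proof but simply notes that the classical $\aleph_1$-argument from \cite{R1977} (Theorem~1.24(ii)) can be replicated verbatim with ``countable'' replaced by ``fewer than $\kappa$,'' exactly as you propose. Your write-up is in fact more detailed than what the paper provides, and the one subtlety you flag (verifying that $O_i = U_i \cup (Y \setminus \overline{W})$ is dense, using $W \subseteq \overline{G} \subseteq \overline{U_i}$) is precisely the step worth spelling out.
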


By the definition of the induced Zariski topology on $\prod_{\mathfrak{m}\in \mspec(R)}M/\mathfrak{m}M$, it follows that every non-empty open subset is dense in the space. Since $\prod_{\mathfrak{m}\in \mspec(R)}M/\mathfrak{m}M$ is a $\kappa_M$-Baire space under this topology, it follows that every $G_{\delta, \kappa_M}$ subset of $\prod_{\mathfrak{m}\in \mspec(R)}M/\mathfrak{m}M$ is dense in the space. Thus, $q(M)$ is a $\kappa_M$-Baire space if and only if the complement of $q(M)$ in $\prod_{\mathfrak{m}\in \mspec(R)}M/\mathfrak{m}M$ does not contain any $G_{\delta, \kappa_M}$ subset. The open subsets of the induced Zariski topology on $\prod_{\mathfrak{m}\in \mspec(R)}M/\mathfrak{m}M$ being dense in the topology is equivalent to the fact that closed subsets of the space $\prod_{\mathfrak{m}\in \mspec(R)}M/\mathfrak{m}M$ are nowhere dense. Now the complement of $q(M)$ containing a $G_{\delta, \kappa_M}$ subset, is equivalent to $q(M)$ being contained in the union of less than $\kappa_M$-many closed subsets of $\prod_{\mathfrak{m}\in \mspec(R)}M/\mathfrak{m}M$. 

\begin{definition}(See \cite[pp. 64]{R1977}.)\label{D7}
	For any infinite cardinal $\kappa$, a space $X$ is said to be of \textit{first $\kappa$-category} if it can be written as a union of less than $\kappa$-many nowhere dense subsets.
\end{definition}

In light of this extension of the notion of first category spaces, and the fact that subsets of nowhere dense sets are nowhere dense,  it follows from Proposition~\ref{P16} and Lemma~\ref{L7} that:

\begin{corollary}\label{C14}
	The subspace $q(M)$ of $\prod_{\mathfrak{m}\in \mspec(R)}M/\mathfrak{m}M$ is a $\kappa_M$-Baire space if and only if it is not of first $\kappa_M$-category.
\end{corollary}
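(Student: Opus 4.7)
The plan is to assemble the equivalence by combining Proposition~\ref{P16} (density of $q(M)$ in $Y := \prod_{\mathfrak{m}\in \mspec(R)} M/\mathfrak{m}M$), the fact from Remark~\ref{R12}(1) that $Y$ is itself a $\kappa_M$-Baire space, and Lemma~\ref{L7}. The key auxiliary observation, already established in the paragraph preceding the corollary, is that every nonempty open subset of $Y$ is dense in $Y$; this will be the lever translating subspace notions for $q(M)$ back into ambient notions in $Y$.

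First I would invoke Lemma~\ref{L7}: since $q(M)$ is a dense subspace of the $\kappa_M$-Baire space $Y$, we have that $q(M)$ is $\kappa_M$-Baire if and only if every $G_{\delta,\kappa_M}$-subset of $Y$ contained in $Y\setminus q(M)$ is nowhere dense in $Y$. Because every nonempty open set of $Y$ is dense, a nonempty $G_{\delta,\kappa_M}$-set in $Y$ is an intersection of fewer than $\kappa_M$ dense open sets, and by the $\kappa_M$-Baire property of $Y$ such an intersection is dense in $Y$, hence not nowhere dense. Thus the Lemma~\ref{L7} condition simplifies to: $Y\setminus q(M)$ contains no nonempty $G_{\delta,\kappa_M}$-subset, i.e., $q(M)$ is not covered by fewer than $\kappa_M$ proper closed subsets of $Y$ (the latter being the complements of the open sets whose intersection yields the $G_{\delta,\kappa_M}$).

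Next I would translate this ambient-space statement into a statement about $q(M)$ in its own subspace topology, matching Definition~\ref{D7}. For one direction, given a cover $q(M)\subseteq \bigcup_{i\in I} C_i$ with each $C_i$ a proper closed subset of $Y$ and $|I|<\kappa_M$, the sets $N_i := q(M)\cap C_i$ are closed in $q(M)$, and each is nowhere dense in $q(M)$: if some nonempty relatively open $q(M)\cap U$ (for $U$ nonempty open in $Y$) were contained in $N_i$, then the nonempty open set $U\setminus C_i$ would be disjoint from $q(M)$, contradicting density of $q(M)$ together with the fact that $U\setminus C_i$ is nonempty open in $Y$ (hence dense). For the converse, given $q(M)=\bigcup_i N_i$ with $N_i$ nowhere dense in $q(M)$, I would take $D_i := \overline{N_i}^{\,Y}$ and argue that each $D_i$ is a proper closed subset of $Y$, by again using that every nonempty open of $Y$ meets $q(M)$: if $D_i=Y$ then $q(M)\cap D_i=q(M)$ would have nonempty interior in $q(M)$, contradicting nowhere-denseness. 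Then $q(M)\subseteq\bigcup_i D_i$ gives the desired cover.

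Assembling the chain of equivalences, $q(M)$ is a $\kappa_M$-Baire space iff it cannot be written as a union of fewer than $\kappa_M$ nowhere dense subsets, i.e., iff it is not of first $\kappa_M$-category. The only delicate step is the two-way translation between nowhere-denseness in $q(M)$ and proper closedness of $Y$-closures; this is where I would be most careful, but it is a direct consequence of $q(M)$ being dense in $Y$ and every nonempty open of $Y$ being dense, so no additional machinery beyond Lemma~\ref{L7} and Proposition~\ref{P16} is needed.
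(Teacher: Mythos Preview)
Your proposal is correct and follows essentially the same route as the paper: invoke Proposition~\ref{P16} for density of $q(M)$, Remark~\ref{R12}(1) for $Y$ being $\kappa_M$-Baire, and Lemma~\ref{L7} to reduce to the condition that $Y\setminus q(M)$ contains no nonempty $G_{\delta,\kappa_M}$-set, then pass to complements. The only difference is that you spell out the two-way translation between ``nowhere dense in $q(M)$'' and ``proper closed in $Y$'' more carefully than the paper, which compresses that step into the remark that subsets of nowhere dense sets are nowhere dense (together with the density of $q(M)$); your added care is harmless and arguably clarifying.
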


Since $q(M)$ is a dense subspace of the $\kappa_M$-maximal Baire space $\prod_{\mathfrak{m}\in \mspec(R)}M/\mathfrak{m}M$ for $\kappa_M=\min_{\mathfrak{m}\in S_M}|R/\mathfrak{m}|+1$, it follows that if $q(M)$ is a $\aleph$-Baire space, then $\aleph\leq \kappa_M$. In particular, if $q(M)$ is a $\kappa_M$-Baire space, then it is also a $\kappa_M$-maximal Baire space. By Remark~\ref{R12}($3$), we then have:

\begin{corollary}\label{C15}
	Let $M$ be a finitely generated $R$-module, with the induced Zariski topology, and let $\kappa_M=\min_{\mathfrak{m}\in S_M}|R/\mathfrak{m}|+1$. If the image of $M$ under the map $q: M\rightarrow\prod_{\mathfrak{m}\in \mspec(R)}M/\mathfrak{m}M$ is not of first $\kappa_M$-category, then $\sigma_{\tau}(M, R)=\kappa_M$. In particular, $\sigma(M, R)= \kappa_M$.
\end{corollary}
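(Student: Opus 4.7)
The plan is to chain together the preceding results---Corollary~\ref{C14}, Proposition~\ref{P16}, and the three observations in Remark~\ref{R12}---without introducing any new technical ingredient. The corollary amounts to assembling these into the right logical sequence, so the task is one of bookkeeping rather than of producing fresh arguments.

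First, I would apply Corollary~\ref{C14} to translate the hypothesis: the assumption that $q(M)$ is not of first $\kappa_M$-category is exactly what that corollary requires to conclude that $q(M)$, viewed as a subspace of $\prod_{\mathfrak{m}\in\mspec(R)}M/\mathfrak{m}M$, is a $\kappa_M$-Baire space. Second, I would combine Proposition~\ref{P16} (density of $q(M)$ in the product) with Remark~\ref{R12}(1) (the ambient product is $\kappa_M$-maximal Baire). The discussion immediately preceding the corollary upgrades ``$\kappa_M$-Baire'' to ``$\kappa_M$-maximal Baire'' for the dense subspace $q(M)$: any dense subspace that were $\aleph$-Baire for some $\aleph>\kappa_M$ would force the ambient $\kappa_M$-maximal Baire space to strictly exceed its own maximality, which is absurd. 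Hence $q(M)$ is $\kappa_M$-maximal Baire, and by Remark~\ref{R12}(2) the same property transfers to $M$, since the induced Zariski topology makes $M\to q(M)$ an open continuous surjection whose preimages of dense open sets are dense open.

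Finally, I would appeal to Remark~\ref{R12}(3): every proper submodule of $M$ is contained in a maximal submodule, which is Zariski-closed, so $\sigma_\tau(M,R)\leq \sigma(M,R)$. The $\kappa_M$-maximal Baire property of $M$ yields $\sigma_\tau(M,R)\geq \kappa_M$, while the standard bound $\sigma(M,R)\leq \kappa_M$ recalled in the introduction supplies the opposite inequality. Together these give the chain
\[
\kappa_M \;\leq\; \sigma_\tau(M,R) \;\leq\; \sigma(M,R) \;\leq\; \kappa_M,
\]
forcing both desired equalities simultaneously.

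I do not expect a genuine obstacle: the substantive work is already absorbed into Corollary~\ref{C14} and the surrounding remarks. The only step that requires mild care is the passage from ``$\kappa_M$-maximal Baire for $q(M)$'' to ``$\kappa_M$-maximal Baire for $M$''---this rests on the fact that openness of $M \twoheadrightarrow q(M)$ is built into the definition of the induced Zariski topology on $M$, so the Baire-type properties move freely in both directions along this quotient. Once that is noted, the corollary follows immediately.
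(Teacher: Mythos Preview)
Your proposal is correct and follows essentially the same route as the paper: the paper places the entire argument in the paragraph immediately preceding the corollary (Corollary~\ref{C14} $\Rightarrow$ $q(M)$ is $\kappa_M$-Baire; density in the $\kappa_M$-maximal product upgrades this to $\kappa_M$-maximal; then Remark~\ref{R12}(2)--(3) finish), and your write-up simply makes these steps explicit. The one point worth noting is that your justification for the maximality upgrade---that a dense $\aleph$-Baire subspace would force the ambient space to be $\aleph$-Baire---is in fact the right direction of implication here (dense open sets in the ambient restrict to dense open sets in the dense subspace, and density passes back up), so this step is sound.
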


\noindent We now study the special case when $q(M)=\prod_{\mathfrak{m}\in \mspec(R)}M/\mathfrak{m}M$, and make some observations.

\begin{enumerate}
	
	\item \textbf{Compactness of $M\iff$ Compactness of $M/\jac(M)$ (or $q(M)$):}
	
	The map $q: M\rightarrow \prod_{\mathfrak{m}\in \mspec(R)}M/\mathfrak{m}M$ can be factored into the projection map $q': M\twoheadrightarrow M/\jac(M)$, and the isomorphism $h: M/\jac(M)\rightarrow q(M)\subset \prod_{\mathfrak{m}\in \mspec(R)}M/\mathfrak{m}M$. Since $h$ is a bijection, it is a homeomorphism under the topology. Next, we topologize $M$ by declaring the open sets to be exactly the inverse images under $q'$ of the open sets of $M/\jac(M)$. Clearly, $M$ compact implies $M/\jac(M)$ compact, since it is the continuous image of a compact space. If $M/\jac(M)$ is compact, and $\lbrace U_i\rbrace_{i\in I}$ is any open cover of $M$, then $\lbrace q(U_i)\rbrace_{i\in I}$ is an open cover of $M/\jac(M)$ since $q'$ is an open map, and hence the inverse image of a finite subcover of $\lbrace q(U_i)\rbrace_{i\in I}$ provides a finite subcover of $\lbrace U_i\rbrace_{i\in I}$ 
	. Hence, the compactness of $M/\jac(M)\cong q(M)$ implies the compactness of $M$.\smallskip
	
	\item \textbf{Surjectivity of the map $q$}: Under the assumption that $S_M$ equals $\mspec(R)$, the converse holds as well.
	
	Assume $M$ is compact. Then for any element $(x_\mathfrak{m})_{\mathfrak{m}\in \mspec(R)}$ in the product space  $\prod_{\mathfrak{m}\in \mspec(R)}M/\mathfrak{m}M$, consider the collection $\lbrace q_{\mathfrak{m}}^{-1}(x_{\mathfrak{m}})\rbrace_{\mathfrak{m}\in \mspec(R)}$ of closed subsets, where $q_{\mathfrak{m}}: M\rightarrow M/\mathfrak{m}M$ are the natural continuous projection maps. For any finite subset $\sigma\subset \mspec(R)$, the map $q_\sigma:M\rightarrow \prod_{\mathfrak{m}\in\sigma}M/\mathfrak{m}M$ is surjective by the Chinese Remainder theorem, so the intersection of any finite sub-collection of $\lbrace q_{\mathfrak{m}}^{-1}(x_{\mathfrak{m}})\rbrace_{\mathfrak{m}\in \mspec(R)}$ is non-empty. Since $M$ is compact, by the finite intersection property the intersection of all sets in $\lbrace q_{\mathfrak{m}}^{-1}(x_{\mathfrak{m}})\rbrace_{\mathfrak{m}\in \mspec(R)}$ is non-empty. Thus, if $M$ is a compact space with the induced topology, and $S_M=\mspec(R)$, then the map $q: M\rightarrow \prod_{\mathfrak{m}\in \mspec(R)}M/\mathfrak{m}M$ is surjective.
\end{enumerate}

Summarizing these observations, one has the following.

\begin{proposition}\label{P17}
	If $q(M)=\prod_{\mathfrak{m}\in \mspec(R)}M/\mathfrak{m}M$, i.e., $q$ is surjective, then $M$ is compact. Under the assumption that the set $S_M$ equals $\mspec(R)$, the converse holds: if $M$ is a compact space under the induced Zariski topology, then $q(M)=\prod_{\mathfrak{m}\in \mspec(R)}M/\mathfrak{m}M$, i.e., $q$ is surjective. 
\end{proposition}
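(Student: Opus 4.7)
The plan is to prove the two implications separately, exploiting the observations catalogued in the bullet points immediately preceding the statement.

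For the forward implication, I would first invoke Tychonoff's theorem together with Lemma~\ref{L6} to conclude that the product space $\prod_{\mathfrak{m}\in \mspec(R)} M/\mathfrak{m}M$ is compact. Assuming $q$ is surjective, the image $q(M)$ equals the whole product and is therefore compact. I would then factor $q$ as $q = h \circ q'$, where $q' : M \twoheadrightarrow M/\jac(M)$ is the natural projection and $h : M/\jac(M) \to q(M)$ is the $R$-linear bijection supplied by Lemma~\ref{L5}. Since the topology on $M$ is defined to be the initial topology with respect to $q$ (equivalently, with respect to $q'$ after identifying $M/\jac(M)$ with $q(M)$), the map $q'$ is continuous, surjective, and open. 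A standard argument then transfers compactness upward: given an open cover $\{U_i\}_{i\in I}$ of $M$, the image $\{q'(U_i)\}_{i\in I}$ is an open cover of $M/\jac(M)$ (openness of $q'$), and a finite subcover of the latter pulls back (surjectivity of $q'$) to a finite subcover of $\{U_i\}_{i\in I}$. Hence $M$ is compact.

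For the converse, assume $S_M = \mspec(R)$ and that $M$ is compact under its induced Zariski topology. The hypothesis $S_M = \mspec(R)$ is exactly what is needed to guarantee that every factor space $M/\mathfrak{m}M$ is topologized by the induced Zariski topology on a vector space of dimension at least $2$, hence is $T1$; in particular singletons are closed. Fix an arbitrary element $(x_\mathfrak{m})_{\mathfrak{m}\in \mspec(R)} \in \prod_{\mathfrak{m}\in \mspec(R)} M/\mathfrak{m}M$, and set $F_\mathfrak{m} := q_\mathfrak{m}^{-1}(x_\mathfrak{m}) \subseteq M$, where $q_\mathfrak{m} : M \to M/\mathfrak{m}M$ is the continuous projection. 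Each $F_\mathfrak{m}$ is closed in $M$ by the $T1$ property of the factor spaces together with continuity of $q_\mathfrak{m}$. For any finite subset $\sigma \subset \mspec(R)$, the Chinese Remainder Theorem gives surjectivity of the natural map $q_\sigma : M \twoheadrightarrow \prod_{\mathfrak{m}\in \sigma} M/\mathfrak{m}M$, so $\bigcap_{\mathfrak{m} \in \sigma} F_\mathfrak{m} = q_\sigma^{-1}((x_\mathfrak{m})_{\mathfrak{m}\in\sigma}) \neq \emptyset$. The family $\{F_\mathfrak{m}\}_{\mathfrak{m}\in \mspec(R)}$ thus has the finite intersection property, so by compactness of $M$ its total intersection is nonempty. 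Any element in this intersection is a preimage of $(x_\mathfrak{m})_{\mathfrak{m}\in \mspec(R)}$ under $q$, proving surjectivity.

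The main subtlety — really the only place the hypothesis $S_M = \mspec(R)$ is used — is ensuring that each $F_\mathfrak{m}$ is closed. If $S_M$ were a proper subset of $\mspec(R)$, then for $\mathfrak{m} \in \mspec(R) \setminus S_M$ the factor $M/\mathfrak{m}M$ would carry the ``discrete subspace topology'' of Definition~\ref{factordef}, whose only closed sets are $\emptyset$, $\{0\}$, and the whole space; a point $x_\mathfrak{m} \neq 0$ would not be closed and the finite-intersection argument would collapse. Everything else (compactness of products, CRT, the open-map property of $q'$) is routine, so once the $T1$ point is handled the proof is essentially a direct translation of the preceding observations into the formal statement.
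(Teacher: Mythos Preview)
Your proposal is correct and follows essentially the same approach as the paper's own argument: the forward direction transfers compactness from the product (Lemma~\ref{L6}) up to $M$ via the open surjection $q'$, and the converse uses the $T1$ property of the factor spaces (guaranteed by $S_M=\mspec(R)$), the Chinese Remainder Theorem for the finite intersection property, and compactness of $M$. Your explicit identification of \emph{why} the hypothesis $S_M=\mspec(R)$ is needed---namely, that otherwise singletons in the discrete-subspace-topology factors need not be closed---is a useful clarification that the paper leaves implicit.
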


Thus, assuming $S_M=\mspec(R)$, there exists an $R$-linear homeomorphism between $M/\jac(M)$ and the inverse limit $\varprojlim_{\sigma\in\mathcal{F}}M_{\sigma}$ of the system $((M_{\sigma})_{\sigma\in\mathcal{F}}, (p_{ij})_{\sigma_i\preccurlyeq\sigma_j\in\mathcal{F}})$ if and only if $M$ is compact. 
One gets the following corollary immediately from the above analysis:

\begin{corollary}\label{C16}
	Assume that the set $S_M$ equals $\mspec(R)$. If $M$ is a compact space under the induced Zariski topology, then we have the equality $\sigma_{\tau}(M, R)=\sigma(M, R)=\min_{\mathfrak{m}\in \mspec(R)}|R/\mathfrak{m}|+1$.
\end{corollary}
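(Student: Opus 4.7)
The plan is to chain together Proposition~\ref{P17}, Remark~\ref{R12}(1), Corollary~\ref{C14}, and Corollary~\ref{C15}. Since the hypotheses of Corollary~\ref{C16} are precisely the ones needed to invoke the converse direction of Proposition~\ref{P17}, the compactness assumption on $M$ will deliver surjectivity of the map $q: M \to \prod_{\mathfrak{m}\in \mspec(R)} M/\mathfrak{m}M$, after which Corollary~\ref{C15} will immediately yield the two claimed equalities.

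More concretely, I would proceed in the following order. First, set $\kappa_M := \min_{\mathfrak{m}\in \mspec(R)}|R/\mathfrak{m}|+1$, which by our standing assumption $S_M = \mspec(R)$ agrees with $\min_{\mathfrak{m}\in S_M}|R/\mathfrak{m}|+1$. Since $M$ is compact and $S_M = \mspec(R)$, Proposition~\ref{P17} gives the surjectivity $q(M) = \prod_{\mathfrak{m}\in \mspec(R)} M/\mathfrak{m}M$. Next, invoke Remark~\ref{R12}(1), which (assuming the axiom of choice) says that the product space $\prod_{\mathfrak{m}\in \mspec(R)} M/\mathfrak{m}M$ equipped with the product topology is a $\kappa_M$-maximal Baire space. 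Therefore the image $q(M)$, being literally equal to this product, is itself a $\kappa_M$-Baire space.

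Now I apply Corollary~\ref{C14}: since $q(M)$ is a $\kappa_M$-Baire space, it is not of first $\kappa_M$-category as a subset of $\prod_{\mathfrak{m}\in \mspec(R)} M/\mathfrak{m}M$. Finally, Corollary~\ref{C15} is exactly the implication ``$q(M)$ not of first $\kappa_M$-category $\Rightarrow \sigma_\tau(M,R) = \sigma(M,R) = \kappa_M$'', and applying it completes the proof.

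There is essentially no obstacle here; the work has been done upstream. The only subtlety worth double-checking is that the notation $\kappa_M = \min_{\mathfrak{m}\in S_M}|R/\mathfrak{m}|+1$ in the earlier results (which are stated in the generality $S_M \subseteq \mspec(R)$) does coincide with the quantity $\min_{\mathfrak{m}\in \mspec(R)}|R/\mathfrak{m}|+1$ appearing in the statement of Corollary~\ref{C16}, which is immediate from the assumption $S_M = \mspec(R)$. Thus the proof is one short deduction chaining four previously established facts.
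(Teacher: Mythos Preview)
Your proposal is correct and matches the paper's approach: the paper simply states that the corollary follows ``immediately from the above analysis,'' and your chain Proposition~\ref{P17} $\to$ Remark~\ref{R12}(1) $\to$ Corollary~\ref{C14} $\to$ Corollary~\ref{C15} is precisely that analysis made explicit.
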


When $S_M=\mspec(R)$, the analysis evidently involves a better behaved topology. For any finitely generated module $M$ over a ring $R$, if $S_M\neq \mspec(R)$, then one can consider the multiplicative subset $T=R\setminus\bigcup_{\mathfrak{m}\in S_M}\mathfrak{m}$, and localize to get a finitely generated module $T^{-1}M$ over the ring $T^{-1}R$. Using that every maximal ideal of $T^{-1}R$ is of the form $T^{-1}\mathfrak{m}$ for $\mathfrak{m}\in \mspec(R)$ such that $\mathfrak{m}\cap T=\emptyset$, the maximal ideals of $T^{-1}M$ are precisely $T^{-1}\mathfrak{m}$ for $\mathfrak{m}\in S_M\subsetneq \mspec(R)$. 

Let $S_{T^{-1}M}:=\lbrace T^{-1}\mathfrak{m}\in \mspec(T^{-1}R):$ $\dim_{(T^{-1}R/T^{-1}\mathfrak{m})}(T^{-1}M/(T^{-1}\mathfrak{m})(T^{-1}M))\geq2\rbrace$. We claim $S_{T^{-1}M}=\mspec(T^{-1}R)$. To see this, note that for all $\mathfrak{m}\in S_M$, we have $T^{-1}(\mathfrak{m}M)= (T^{-1}\mathfrak{m})(T^{-1}M)$ and hence $T^{-1}M/(T^{-1}\mathfrak{m})(T^{-1}M)=T^{-1}M/T^{-1}(\mathfrak{m}M)\cong T^{-1}(M/\mathfrak{m}M)$. One can check that the localization map $R/\mathfrak{m}\rightarrow T^{-1}(R/\mathfrak{m})$ (for $\mathfrak{m}\in S_M$) induces an isomorphism between $R/\mathfrak{m}$ and $T^{-1}(R/\mathfrak{m})$ as fields. Hence, the $T^{-1}(R/\mathfrak{m})$-module  $T^{-1}(M/\mathfrak{m}M)$ can be interpreted as an $R/\mathfrak{m}$-module as well. Analogously, the localization map $M/\mathfrak{m}M\rightarrow T^{-1}(M/\mathfrak{m}M)$ induces an isomorphism between the two as $R/\mathfrak{m}$-modules, whence: $$\dim_{(T^{-1}R/T^{-1}\mathfrak{m})}(T^{-1}M/T^{-1}(\mathfrak{m}M))=\dim_{R/\mathfrak{m}}(T^{-1}M/T^{-1}(\mathfrak{m}M))=\dim_{R/\mathfrak{m}}(M/\mathfrak{m}M)$$

Thus, $\dim_{(T^{-1}R/T^{-1}\mathfrak{m})}(T^{-1}M/T^{-1}(\mathfrak{m}M))\geq2$ for all $\mathfrak{m}\in S_M$, i.e. for all $T^{-1}\mathfrak{m}\in \mspec(T^{-1}R)$, leading to $S_{T^{-1}M}=\mspec(T^{-1}R)$. This shows that localization can help remove the ``extra" maximal ideals. Like any module $M$, we can topologize $T^{-1}M$, using the map $q_T: T^{-1}M\rightarrow \prod_{T^{-1}\mathfrak{m}\in\mspec(T^{-1}R)}T^{-1}M/T^{-1}(\mathfrak{m}M)$, where each factor space in the product is equipped with the induced Zariski topology (since it has dimension at least $2$), as an affine space over the field $R/\mathfrak{m}\cong T^{-1}R/T^{-1}\mathfrak{m}$. In fact the topologies defined on $M$ and $T^{-1}M$ are such that the localization map $f:M\rightarrow T^{-1}M$ is a continuous ($R$-linear) map. To see this, consider the diagram:
$$\begin{tikzcd}
M \arrow[d, "f"'] \arrow[rr, "q"] &  &                                  \prod_{\mathfrak{m}\in\mspec(R)}M/\mathfrak{m}M  \arrow[d, "f_{\pi}"]                                              &  & \\
T^{-1}M \arrow[rr, "q_T"']        &  & \prod_{T^{-1}\mathfrak{m}\in\mspec(T^{-1}R)}T^{-1}M/T^{-1}(\mathfrak{m}M) &  &   
\end{tikzcd}$$

\noindent where $f:M\rightarrow T^{-1}M$ is the localization map, $q=(q_\mathfrak{m})_{\mathfrak{m}\in\mspec(R)}$ with $q_\mathfrak{m}: M\rightarrow M/\mathfrak{m}M$ being the natural quotient map and similarly $q_T=(q_{T, \mathfrak{m}})_{T^{-1}\mathfrak{m}\in \mspec(T^{-1}R)}$, with $q_{T,\mathfrak{m}}: T^{-1}M\rightarrow T^{-1}M/T^{-1}(\mathfrak{m}M)$ being the corresponding natural quotient map. The map $f_\pi$ is defined by its component maps, i.e. $f_\pi=(f_{\pi, \mathfrak{m}})_{\mathfrak{m}\in S_M}$, where $f_{\pi, \mathfrak{m}}=\psi_{\mathfrak{m}}\circ p_\mathfrak{m}$ with $p_\mathfrak{m}: \prod_{\mathfrak{m}\in \mspec(R)}M/\mathfrak{m}M\rightarrow M/\mathfrak{m}M$ being the projection map onto a component and $\psi_\mathfrak{m}: M/\mathfrak{m}M\rightarrow T^{-1}M/T^{-1}(\mathfrak{m}M)$ denoting the $R/\mathfrak{m}$-linear isomorphism induced by the localization map, as discussed above. 

One can check the commutativity of the above diagram and hence obtain $q_T\circ f= f_\pi\circ q$. Clearly, the maps $q$ and $q_T$ are continuous. To check the continuity of $f_\pi$, it suffices to check the continuity of each of its component maps, i.e. the maps $\psi_\mathfrak{m}\circ p_\mathfrak{m}$. Clearly $p_{\mathfrak{m}}$ is continuous, and so is the map $\psi_\mathfrak{m}$ (for all $\mathfrak{m}\in S_M$), since every linear isomorphism on vector spaces with the induced Zariski topology is a homeomorphism of topological spaces. Thus, the maps $\psi_\mathfrak{m}\circ p_\mathfrak{m}$ are continuous for all $\mathfrak{m}\in \mspec(R)$. Hence, $f_\pi$ is continuous. By definition, it follows that every open set of $T^{-1}M$ is of the form $q_T^{-1}(\mathcal{U})$ for some open subset $\mathcal{U}\subseteq \prod_{T^{-1}\mathfrak{m}\in\mspec(T^{-1}R)}T^{-1}M/T^{-1}(\mathfrak{m}M)$. Then it follows from the commutativity of the above diagram that $f^{-1}(q_T^{-1}(\mathcal{U}))= q^{-1}(f_\pi^{-1}(\mathcal{U}))$ is an open subset of $M$ due to the continuity of $q$ and $f_\pi$. Hence, $f$ is continuous.

We will show that if $M$ is a $\kappa$-Baire space for any infinite cardinal $\kappa$, then so is the localized module $T^{-1}M$, under its corresponding induced Zariski topology. This uses the following lemma.

\begin{lemma}\label{bairelemma}
The image of a $\kappa$-Baire space under a continuous open map is a $\kappa$-Baire space.
\end{lemma}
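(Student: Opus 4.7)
The plan is to unpack the $\kappa$-Baire property of $f(X)$ directly and pull back everything to the $\kappa$-Baire space $X$ using continuity of $f$, then push the density statement forward using openness of $f$. Let $f: X \to Y$ be a continuous open map with $X$ a $\kappa$-Baire space, and equip $f(X)$ with the subspace topology from $Y$. Fix a family $\{U_i\}_{i \in I}$ of dense open subsets of $f(X)$ with $|I| < \kappa$; the goal is to show that $\bigcap_{i \in I} U_i$ is dense in $f(X)$.

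First I would establish the key intermediate claim: each $f^{-1}(U_i)$ is a dense open subset of $X$. Openness is immediate from continuity of $f$, since each $U_i$ is of the form $f(X) \cap V_i$ for some open $V_i \subseteq Y$, so $f^{-1}(U_i) = f^{-1}(V_i)$. For density, take any non-empty open $W \subseteq X$; by openness of $f$, the set $f(W)$ is a non-empty open subset of $Y$, and hence $f(W) \cap f(X) = f(W)$ is a non-empty open subset of $f(X)$. Density of $U_i$ in $f(X)$ gives a point $y \in U_i \cap f(W)$, and any preimage $x \in W$ with $f(x) = y$ lies in $W \cap f^{-1}(U_i)$, establishing density.

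Now by the $\kappa$-Baire property of $X$, the intersection $\bigcap_{i \in I} f^{-1}(U_i) = f^{-1}\bigl(\bigcap_{i \in I} U_i\bigr)$ is dense in $X$. To conclude density of $\bigcap_{i \in I} U_i$ in $f(X)$, take an arbitrary non-empty open $V \subseteq f(X)$, write $V = f(X) \cap V'$ for some open $V' \subseteq Y$, and note that $f^{-1}(V) = f^{-1}(V')$ is a non-empty open subset of $X$ (non-empty because any point of $V$ lies in $f(X)$ and so has a preimage). By the established density, $f^{-1}(V) \cap f^{-1}\bigl(\bigcap_{i \in I} U_i\bigr) \neq \emptyset$; applying $f$ to any point in this intersection produces an element of $V \cap \bigcap_{i \in I} U_i$, completing the proof.

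No step here seems to pose a serious obstacle; the only subtlety is making sure one consistently distinguishes between open sets in $Y$ and open sets in the subspace $f(X)$, which is handled by always writing open subsets of $f(X)$ as traces $f(X) \cap V'$ of open subsets $V'$ of $Y$. Everything else is a routine application of the definitions of continuity, openness, and the $\kappa$-Baire property, with the density of $f^{-1}(U_i)$ being the one genuinely non-trivial link in the chain.
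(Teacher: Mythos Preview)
Your proof is correct. The paper itself omits the proof entirely, simply remarking that the lemma is the $\kappa$-analogue of Corollary~4.2 in Haworth--McCoy and can be proved via the $\kappa$-analogue of their Theorem~4.1; your direct argument (pull back dense opens along the continuous $f$, use the $\kappa$-Baire property of $X$, and push density forward along the open $f$) is exactly the standard self-contained proof one would expect, and supplies what the paper leaves to the reader.
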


We omit the proof of the lemma, as it is the $\kappa$-analogue of Corollary~$4.2$ of \cite{R1977} for ordinary Baire spaces, and can be proved using the $\kappa$-analogue of Theorem~$4.1$ of \cite{R1977}.

\begin{proposition}\label{P18}
    Let $M$ be a finitely generated $R$-module, topologized with the induced Zariski topology. Let $S_M=\lbrace \mathfrak{m}\in \mspec(R):$ $\dim_{R/\mathfrak{m}}(M/\mathfrak{m}M)\geq2\rbrace$ and $T=R\setminus\bigcup_{\mathfrak{m}\in S_M}\mathfrak{m}$. If $M$ is a $\kappa$-Baire space with its induced Zariski topology (for any infinite cardinal $\kappa$), then the $T^{-1}R$-module $T^{-1}M$, with its induced Zariski topology is also a $\kappa$-Baire space.
\end{proposition}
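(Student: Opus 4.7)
The plan is to transfer the $\kappa$-Baire property from $M$ to $T^{-1}M$ via the commutative diagram displayed just before the proposition, combined with Lemma~\ref{bairelemma}. By Remark~\ref{R12}(2) applied to both $M$ and $T^{-1}M$, the task reduces to showing that $q_T(T^{-1}M)$ is a $\kappa$-Baire space whenever $q(M)$ is. Using the $R/\mathfrak{m}$-linear homeomorphisms $\psi_\mathfrak{m}$, I identify $\prod_{T^{-1}\mathfrak{m}\in\mspec(T^{-1}R)} T^{-1}M/T^{-1}(\mathfrak{m}M)$ with $P' := \prod_{\mathfrak{m}\in S_M} M/\mathfrak{m}M$; under this identification, the map $f_\pi$ from the diagram becomes precisely the product projection $\pi : P \to P'$, where $P := \prod_{\mathfrak{m}\in\mspec(R)} M/\mathfrak{m}M$. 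This $\pi$ is a continuous open surjection, and the commutativity $q_T\circ f = f_\pi\circ q$ yields $\pi(q(M)) = q_T(f(M)) \subseteq q_T(T^{-1}M)$.

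Next I would set $A := \pi^{-1}(q_T(T^{-1}M)) \subseteq P$, which is a $\pi$-saturated subset containing $q(M)$. By Proposition~\ref{P16}, $q(M)$ is dense in $P$, hence dense in $A$ under the subspace topology. The key auxiliary observation is the following: \emph{a topological space containing a dense $\kappa$-Baire subspace is itself $\kappa$-Baire.} To see this, given a family $\{O_i\}_{i\in I}$ of open dense subsets of $Y$ with $|I| < \kappa$, and a dense $\kappa$-Baire subspace $X \subseteq Y$, each $O_i \cap X$ is open and dense in $X$ (density uses that $X$ is dense in $Y$); hence $\bigcap_i(O_i \cap X) = X \cap \bigcap_i O_i$ is dense in $X$, and therefore in $Y$, forcing $\bigcap_i O_i$ to be dense in $Y$. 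Applying this with $X = q(M)$ (which is $\kappa$-Baire by hypothesis and Remark~\ref{R12}(2)) and $Y = A$, I conclude that $A$ is $\kappa$-Baire.

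Finally, because $A$ is $\pi$-saturated, the restriction $\pi|_A : A \to q_T(T^{-1}M)$ is a continuous open surjection: openness follows from the identity $\pi(V \cap A) = \pi(V) \cap \pi(A)$ (valid for $\pi$-saturated $A$ and arbitrary $V \subseteq P$) together with the openness of $\pi$ itself, while surjectivity onto $q_T(T^{-1}M)$ follows from surjectivity of $\pi$ onto $P'$. By Lemma~\ref{bairelemma}, $q_T(T^{-1}M)$ is then $\kappa$-Baire, and hence $T^{-1}M$ is $\kappa$-Baire by Remark~\ref{R12}(2), completing the proof. The main technical point is the dense-$\kappa$-Baire-subspace observation above, which is a direct $\kappa$-analogue of a standard fact about ordinary Baire spaces; once it is in hand, the saturation-based openness of $\pi|_A$ and the identification of $f_\pi$ with a product projection are routine.
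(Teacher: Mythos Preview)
Your proof is correct and uses the same essential ingredients as the paper's argument: Remark~\ref{R12}(2) at both ends, the identification of $f_\pi$ with a product projection, the ``dense $\kappa$-Baire subspace $\Rightarrow$ ambient space is $\kappa$-Baire'' observation, and Lemma~\ref{bairelemma}. The difference is in the order of application. The paper first pushes $q(M)$ forward under $f_\pi$ and invokes Lemma~\ref{bairelemma} to conclude $f_\pi(q(M))$ is $\kappa$-Baire, then uses the dense-subspace observation (spelled out explicitly in the final paragraph of the paper's proof) to pass from $f_\pi(q(M))$ up to $q_T(T^{-1}M)$. You instead first pass from $q(M)$ up to the saturation $A=\pi^{-1}(q_T(T^{-1}M))$ via the dense-subspace observation, and only then push forward via Lemma~\ref{bairelemma}. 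Your ordering has the advantage that the openness hypothesis of Lemma~\ref{bairelemma} is transparently satisfied, since $\pi|_A$ is open precisely because $A$ is $\pi$-saturated; by contrast, the paper applies Lemma~\ref{bairelemma} to $f_\pi$ viewed on $q(M)$, where the openness of the restriction to a non-saturated subspace is not explicitly justified. Both routes are short and reach the same conclusion.
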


\begin{proof}
Using the above notation, for any basic open set $\prod_{\mathfrak{m}\in \mspec(R)}\mathcal{O}_\mathfrak{m}$ of $\ \prod_{\mathfrak{m}\in \mspec(R)}M/\mathfrak{m}M$, one can see that $f_\pi(\prod_{\mathfrak{m}\in \mspec(R)}\mathcal{O}_\mathfrak{m})= \prod_{\mathfrak{m}\in S_M}\psi_\mathfrak{m}(\mathcal{O}_\mathfrak{m})$, where the indexing of the product by $\mathfrak{m}\in S_M$ and by $T^{-1}\mathfrak{m}\in\mspec(T^{-1}M)$ are equivalent. Since each $\psi_\mathfrak{m}$ is a homeomorphism, $f_\pi(\prod_{\mathfrak{m}\in \mspec(R)}\mathcal{O}_\mathfrak{m})$ is an open subspace of $\prod_{T^{-1}\mathfrak{m}\in\mspec(T^{-1}R)}T^{-1}M/T^{-1}(\mathfrak{m}M)$. Thus $f_\pi$ is an open continuous map. By Remark~\ref{R12}$(2)$, we know that if $M$ is a $\kappa$-Baire space, then so is $q(M)$. Since $f_\pi$ is an open continuous map, it follows using Lemma~\ref{bairelemma} that if $M$ is a $\kappa$-Baire space then so is $f_\pi(q(M))$. Also note that $f_\pi$ is surjective, and since $q(M)$ is dense in $\prod_{\mathfrak{m}\in\mspec(R)}M/\mathfrak{m}M$, it follows that $f_\pi(q(M))$ is dense in $\prod_{T^{-1}\mathfrak{m}\in\mspec(T^{-1}R)}T^{-1}M/T^{-1}(\mathfrak{m}M)$. The relation $f_\pi\circ q= q_T\circ f$ implies that $f_\pi(q(M))$ is a subspace of $q_T(T^{-1}M)$. Thus, if $\mathcal{U}$ is any open subset of $\prod_{T^{-1}\mathfrak{m}\in\mspec(T^{-1}R)}T^{-1}M/T^{-1}(\mathfrak{m}M)$, then $\mathcal{U}\cap q_T(T^{-1}M)$ and $\mathcal{U}\cap f_\pi(q(M))$ are both non-empty, and every open subset of both $q_T(T^{-1}M)$ and $f_\pi(q(M))$ are of such form. 

Since every non-empty open subset of  $\prod_{T^{-1}\mathfrak{m}\in\mspec(T^{-1}R)}T^{-1}M/T^{-1}(\mathfrak{m}M)$ is dense, the same property holds for $q_T(T^{-1}M)$ and $f_\pi(q(M))$. Let $\lbrace \mathcal{O}_i\rbrace_{i\in I}$ be a family of non-empty open subsets of $q_T(T^{-1}M)$. Then there exist open subsets $\mathcal{U}_i$ of  $\prod_{T^{-1}\mathfrak{m}\in\mspec(T^{-1}R)}T^{-1}M/T^{-1}(\mathfrak{m}M)$, for all $i\in I$, such that $\mathcal{O}_i=\mathcal{U}_i\cap q_T(T^{-1}M)$. As $f_\pi(q(M))$ is dense, $\tilde{O}_i=\mathcal{U}_i\cap f_\pi(q(M))$ is a non-empty open subset of $f_\pi(q(M))$ for all $i\in I$. Since $f_\pi(q(M))$ is a $\kappa$-Baire space, if $|I|<\kappa$ then $\bigcap_{i\in I}\tilde{O}_i$ is a dense subspace of $f_\pi(q(M))$, i.e. for any open subset $\mathcal{U}\subseteq\prod_{T^{-1}\mathfrak{m}\in\mspec(T^{-1}R)}T^{-1}M/T^{-1}(\mathfrak{m}M)$, $\bigcap_{i\in I}\tilde{O}_i\cap \mathcal{U}\cap f_{\pi}(q(M))\neq \emptyset$. Clearly $\bigcap_{i\in I}\tilde{O}_i\cap \mathcal{U}\cap f_{\pi}(q(M))\subseteq \bigcap_{i\in I}O_i\cap \mathcal{U}\cap q_T(T^{-1}M)$, thereby proving that $\bigcap_{i\in I}O_i\cap \mathcal{U}\cap q_T(T^{-1}M)\neq\emptyset$, and hence $\bigcap_{i\in I}O_i$ is dense in $q_T(T^{-1}M)$. This proves that $q_T(T^{-1}M)$ is a $\kappa$-Baire space. Hence, so is $T^{-1}M$, by Remark~\ref{R12}$(2)$.
\end{proof}

We next provide a corollary of Proposition~\ref{P18}, which demonstrates its relevance to our problem of interest. We omit the proof, as it is immediate.

\begin{corollary}\label{C17}
Let $M$ be a finitely generated $R$-module (with the induced Zariski topology), which satisfies the equality $\sigma_\tau(M, R)=\sigma(M, R)=\min_{\mathfrak{m}\in S_M}|R/\mathfrak{m}|+1$, where $S_M=\lbrace\mathfrak{m}\in\mspec(R):$ $\dim_{R/\mathfrak{m}}(M/\mathfrak{m}M)\geq 2\rbrace$. Letting $T=R\setminus\bigcup_{\mathfrak{m}\in S_M}\mathfrak{m}$, we have that the $T^{-1}R$-module $T^{-1}M$ (with the induced Zariski topology) also satisfies the equality $$\sigma_\tau(T^{-1}M, T^{-1}R)=\sigma(T^{-1}M, T^{-1}R)=\min_{T^{-1}\mathfrak{m}\in \mspec(T^{-1}R)}|T^{-1}R/T^{-1}\mathfrak{m}|+1 = \min_{\mathfrak{m}\in S_M}|R/\mathfrak{m}|+1.$$
\end{corollary}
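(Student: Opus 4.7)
The plan is to deduce everything from Proposition~\ref{P18} once we identify $\kappa_{T^{-1}M}$ with $\kappa_M$. I would proceed in three short steps.

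First, I would unpack the hypothesis that $\sigma_\tau(M,R) = \kappa_M := \min_{\mathfrak{m}\in S_M}|R/\mathfrak{m}|+1$. Since every non-empty open subset of $M$ in the induced Zariski topology is dense (as recorded in Remark~\ref{R12}), proper closed subsets of $M$ are nowhere dense, and the equality $\sigma_\tau(M,R) = \kappa_M$ is equivalent to $M$ being a $\kappa_M$-maximal Baire space. In particular, $M$ is a $\kappa_M$-Baire space, so by Proposition~\ref{P18} the localization $T^{-1}M$ equipped with its induced Zariski topology is also a $\kappa_M$-Baire space.

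Second, I would identify $\kappa_{T^{-1}M} := \min_{T^{-1}\mathfrak{m}\in S_{T^{-1}M}}|T^{-1}R/T^{-1}\mathfrak{m}|+1$ with $\kappa_M$. This follows from the discussion preceding Proposition~\ref{P18}, where it was established that $S_{T^{-1}M} = \mspec(T^{-1}R) = \{T^{-1}\mathfrak{m} : \mathfrak{m}\in S_M\}$, and that the natural localization map induces a field isomorphism $R/\mathfrak{m}\xrightarrow{\sim} T^{-1}R/T^{-1}\mathfrak{m}$ for each $\mathfrak{m}\in S_M$. Hence $|T^{-1}R/T^{-1}\mathfrak{m}| = |R/\mathfrak{m}|$ for each such $\mathfrak{m}$, and the two minima coincide.

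Third, I would conclude by sandwiching. The general chain of inequalities $\sigma_\tau(T^{-1}M,T^{-1}R)\leq \sigma(T^{-1}M,T^{-1}R)\leq \kappa_{T^{-1}M}$ is always available (the first inequality from Remark~\ref{R12}(3), since maximal submodules are closed in the induced Zariski topology; the second from Theorem~\ref{T1} applied factorwise, as noted at the start of Section~\ref{S4.2}). For the reverse inequality $\sigma_\tau(T^{-1}M,T^{-1}R)\geq \kappa_M$: by the $\kappa_M$-Baire property established in step one, the union of fewer than $\kappa_M$ proper closed subsets of $T^{-1}M$ cannot cover $T^{-1}M$, for otherwise their complements would be a collection of fewer than $\kappa_M$ dense open sets with empty (hence non-dense) intersection. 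Combining with step two, all three quantities equal $\kappa_M = \min_{\mathfrak{m}\in S_M}|R/\mathfrak{m}|+1$.

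There is no real obstacle here; the work has already been done in Proposition~\ref{P18} and in the identification $S_{T^{-1}M} = \{T^{-1}\mathfrak{m} : \mathfrak{m}\in S_M\}$ with the induced field isomorphisms, which is why the author declares the proof immediate. The only point meriting care is verifying that the Baire-to-covering translation goes through on both sides, and this is exactly what Remark~\ref{R12}(3) and the density of open sets guarantee.
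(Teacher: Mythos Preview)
Your proposal is correct and is precisely the argument the paper has in mind: the author omits the proof as ``immediate'' because it is nothing more than (i) reading off from $\sigma_\tau(M,R)=\kappa_M$ that $M$ is $\kappa_M$-Baire, (ii) invoking Proposition~\ref{P18}, (iii) using the already-established identification $S_{T^{-1}M}=\mspec(T^{-1}R)$ with $R/\mathfrak{m}\cong T^{-1}R/T^{-1}\mathfrak{m}$, and (iv) the sandwich $\sigma_\tau\leq\sigma\leq\kappa_{T^{-1}M}$ from Remark~\ref{R12}(3). You have spelled out exactly this.
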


For completeness, we end by characterizing when a finitely generated $R$-module $M$ can have a finite Zariski covering number. Here we only assume that the base ring $R$ is commutative and unital, and \textbf{do not need} any assumption on the cardinalities of the residue fields of $R$. Note that dropping this cardinality assumption does not affect the way we topologize $M$. Clearly then, if the covering number is finite then so is the Zariski covering number. The converse is also true:

\begin{proposition}\label{P19}
For a finitely generated $R$-module $M$, the following are equivalent:
\begin{enumerate}
    \item $\sigma_{\tau}(M, R)$ is finite.
    \item $\sigma_{\tau}(M, R) = 2$. (Equivalently, $M$ is reducible in the induced Zariski topology)
    \item $\sigma(M, R)$ is finite.
\end{enumerate}
\end{proposition}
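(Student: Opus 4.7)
The plan is to handle the three easy implications $(2) \Rightarrow (1)$, $(3) \Rightarrow (1)$, and $(1) \Rightarrow (2)$ by direct arguments, and then prove the substantive implication $(1) \Rightarrow (3)$. For $(2) \Rightarrow (1)$ there is nothing to do. For $(3) \Rightarrow (1)$, I would note that since $M$ is finitely generated, every proper submodule sits inside a maximal submodule, and by Remark~\ref{R12}(3) every maximal submodule is closed in the induced Zariski topology; this upgrades a finite cover by proper submodules to one by proper closed subsets, giving $\sigma_{\tau}(M,R) \leq \sigma(M,R)$. For $(1) \Rightarrow (2)$, I would take a minimal finite cover $M = C_1 \cup \cdots \cup C_n$ by proper closed subsets; since no single proper closed set equals $M$ we have $n \geq 2$, and by minimality the finite union $C_2 \cup \cdots \cup C_n$ is still a proper closed subset, so $M = C_1 \cup (C_2 \cup \cdots \cup C_n)$ is a cover by $2$ proper closed subsets, whence $\sigma_{\tau}(M,R) = 2$.

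The main task is therefore $(1) \Rightarrow (3)$. Invoking $(1) \Rightarrow (2)$, I would start with $M = C_1 \cup C_2$, each $C_i$ proper closed. By the definition of the induced Zariski topology, $C_i = q^{-1}(D_i)$ for some closed subset $D_i$ of $P := \prod_{\mathfrak{m} \in \mspec(R)} M/\mathfrak{m}M$. I would then use Proposition~\ref{P16} (density of $q(M)$ in $P$): any closed subset of $P$ containing $q(M)$ must equal $P$. This first shows that each $D_i$ is proper in $P$ (else $C_i = M$), and then, applied to $D_1 \cup D_2 \supseteq q(M)$, forces $D_1 \cup D_2 = P$. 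Hence $P$ itself is reducible in its product topology.

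The key step is then a factor-wise criterion: $P$ is irreducible if and only if every factor $Z_{\mathfrak{m}} := M/\mathfrak{m}M$ (topologized as in Definition~\ref{factordef}) is irreducible. The nontrivial direction is ``each factor irreducible $\Rightarrow P$ irreducible'', which I would prove by observing that irreducibility of $Z_{\mathfrak{m}}$ means every nonempty open in $Z_{\mathfrak{m}}$ is dense, so a basic open box $\prod_{\mathfrak{m}} V_{\mathfrak{m}}$ is a product of dense sets and hence dense in $P$, whence every nonempty open in $P$ is dense. Factors with $\mathfrak{m} \notin S_M$ are trivially irreducible (only three closed subsets), and factors with $\mathfrak{m} \in S_M$ and $R/\mathfrak{m}$ infinite are irreducible by Corollary~\ref{C5}. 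Consequently, the reducibility of $P$ obtained above forces some $\mathfrak{m}_0 \in S_M$ with $R/\mathfrak{m}_0$ finite; for this $\mathfrak{m}_0$, Theorem~\ref{T1} gives a cover of $Z_{\mathfrak{m}_0}$ by $|R/\mathfrak{m}_0|+1$ proper subspaces, whose preimages under the surjection $q_{\mathfrak{m}_0}: M \twoheadrightarrow Z_{\mathfrak{m}_0}$ form a finite cover of $M$ by proper $R$-submodules, yielding $\sigma(M,R) \leq |R/\mathfrak{m}_0|+1 < \infty$.

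The hard part will be the factor-wise reducibility criterion for the product $P$, for which the product-of-dense-is-dense observation combined with Proposition~\ref{P16} is the crucial input; once these are in hand, the step from ``some residue field in $S_M$ is finite'' to finiteness of the algebraic covering number $\sigma(M,R)$ is a direct pullback along $q_{\mathfrak{m}_0}$.
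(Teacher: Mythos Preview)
Your proof is correct and follows essentially the same route as the paper's. Both arguments pass through density of $q(M)$ in $P=\prod_{\mathfrak{m}}M/\mathfrak{m}M$ (Proposition~\ref{P16}) to reduce reducibility of $M$ to reducibility of some factor $M/\mathfrak{m}M$, and both conclude that this forces some $\mathfrak{m}_0\in S_M$ with $R/\mathfrak{m}_0$ finite. The only differences are presentational: the paper phrases the key step in terms of two disjoint nonempty \emph{basic open} subsets of $M$ (dually to your two proper closed subsets pushed forward to $P$), and for the final implication ``$R/\mathfrak{m}_0$ finite for some $\mathfrak{m}_0\in S_M$ $\Rightarrow$ $\sigma(M,R)$ finite'' the paper simply cites \cite{khare2020}, whereas you construct the finite submodule cover explicitly via Theorem~\ref{T1} and pullback along $q_{\mathfrak{m}_0}$.
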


Combined with the inequality $\sigma_{\tau}(M, R)\leq \sigma(M, R)$, Proposition~\ref{P19} has the following consequence: A finitely generated $R$-module $M$ has a countable covering number $\sigma(M,R) = \aleph_0$, if and only if its Zariski covering number is the same: $\sigma_\tau(M,R) = \aleph_0$.

\begin{proof}
The equivalence of $(1)$ and $(2)$ is immediate. Now consider the natural map $q:M\rightarrow\prod_{\mathfrak{m}\in\mspec(R)}M/\mathfrak{m}M$. The equality $\sigma_{\tau}(M, R)=2$ is true if and only if there exist two basic non-empty open subsets $q^{-1}(\prod_{\mathfrak{m}\in\mspec(R)}\mathcal{O}_{\mathfrak{m}})$ and $q^{-1}(\prod_{\mathfrak{m}\in\mspec(R)}\mathcal{U}_{\mathfrak{m}})$ of $M$, such that their intersection is empty, i.e., $\mathcal{W}:=q^{-1}(\prod_{\mathfrak{m}\in\mspec(R)}(\mathcal{O}_{\mathfrak{m}}\cap \mathcal{U}_{\mathfrak{m}}))=\emptyset$. But $\mathcal{W}$ is the inverse image of a basic open set of $\prod_{\mathfrak{m}\in\mspec(R)}M/\mathfrak{m}M$, and $q(M)$ is a dense subset of this product space by Proposition~\ref{P16}. Hence, $\mathcal{W}=\emptyset$ if and only if $\prod_{\mathfrak{m}\in\mspec(R)}(\mathcal{O}_{\mathfrak{m}}\cap \mathcal{U}_{\mathfrak{m}})=\emptyset$, which in turn is possible if and only if $\mathcal{O}_{\mathfrak{m}_0}\cap \mathcal{U}_{\mathfrak{m}_0}=\emptyset$ for some $\mathfrak{m}_0\in S_M$ (assuming axiom of choice). Thus $\mathcal{W}$ is empty if and only if $M/\mathfrak{m}_0M$ is a reducible topological space under its induced Zariski topology for some $\mathfrak{m}_0\in S_M$, which is true if and only if $R/\mathfrak{m}_0$ is finite. From \cite{khare2020}, we know that $R/\mathfrak{m}_0$ is finite for some $\mathfrak{m}_0\in S_M$ if and only if $\sigma(M, R)$ is finite. Thus $(2)$ and $(3)$ are equivalent.
\end{proof}

\subsection*{Acknowledgments}
The author is thankful to Prof.\ Amartya Kumar Dutta, Indian Statistical Institute, Kolkata for introducing him to the toy problem for vector spaces, and for motivating him to explore this in a broader setting and in greater depth. Further, the author is extremely grateful to Prof.\ Apoorva Khare, Indian Institute of Science, Bangalore, for several stimulating and motivating interactions, as well as for a careful and detailed reading of previous versions of this manuscript, which helped greatly improve the exposition.



\end{document}